\documentclass[12pt,reqno]{amsart} 
\usepackage{amssymb}
\usepackage[T1]{fontenc}
\usepackage{mathrsfs}
\usepackage{enumerate}

\setlength{\hoffset}{-20mm} 
\setlength{\textwidth}{165mm}

\textheight=250truemm
\parskip=4truept
\headheight=7pt
\voffset-25mm

\setlength{\intextsep}{1mm}
\setlength{\textfloatsep}{1mm}

\usepackage[usenames]{color}
\definecolor{darkgreen}{rgb}{0,0.7,0}
\definecolor{darkred}{rgb}{0.9,0,0}
\definecolor{darkblue}{rgb}{0,0,0.9}
\definecolor{newercolor}{rgb}{0.2,0,1}
\definecolor{orange}{rgb}{0.9,0.5,0}

\newcommand{\es}{$^\dagger$}

\let\too=\longrightarrow

\reversemarginpar

\newcommand{\cj}[1][G]{\mathrel{\sim_{#1}}}
\newcommand{\cjle}[1][G]{\mathrel{\le_{#1}}}
\newcommand{\cjge}[1][G]{\mathrel{\ge_{#1}}}

\mathcode`\:="003A
\mathchardef\cdot="0201
\renewcommand{\*}{\mathop{\textup{\large{$*$}}}\limits}

\newcommand{\qq}{\mathfrak{q}}



\renewenvironment{enumerate}[1][]
{\begin{enumerat}[#1]\setlength{\itemsep}{6pt}}{\end{enumerat}}

\renewenvironment{itemize}
{\begin{itemiz}\setlength{\itemsep}{6pt}\setlength{\itemindent}{-20pt}}
{\end{itemiz}}

\newenvironment{enuma}{\begin{enumerate}[{\rm(a) }]}{\end{enumerate}}




\numberwithin{table}{section}

\newcommand{\boldd}[1]{{\mathversion{bold}\textbf{#1}}}

\newcommand{\bmid}{\mathrel{\big|}}

\newlength{\short}
\setlength{\short}{\textwidth}
\addtolength{\short}{-20mm}

\newcommand{\dbl}[2]{\renewcommand{\arraystretch}{1.0}%
\setlength{\arraycolsep}{0mm}%
\begin{array}{c}\rule{0pt}{12pt}#1\\#2\end{array}}

\newcommand{\2}[1]{\textbf{2\uppercase{#1}}}

\newcommand{\4}[1]{\widebar{#1}}
\newcommand{\5}[1]{\widehat{#1}}

\let\8=\scriptstyle
\newcommand{\9}[1]{{}^{#1}\!}

\def\pair[#1,#2]{[\hskip-1.5pt[#1,#2]\hskip-1.5pt]}
\def\trp[#1,#2,#3]{[\hskip-1.5pt[#1,#2,#3]\hskip-1.5pt]}


\let\oldcirc=\circ
\renewcommand{\circ}{\mathchoice
    {\mathbin{\scriptstyle\oldcirc}}{\mathbin{\scriptstyle\oldcirc}}
    {\mathbin{\scriptscriptstyle\oldcirc}}
    {\mathbin{\scriptscriptstyle\oldcirc}}}

\def\beq#1\eeq{\begin{equation*}#1\end{equation*}}
\def\beqq#1\eeqq{\begin{equation}#1\end{equation}}

\numberwithin{equation}{section}

\newtheorem{Thm}{Theorem}[section]
\newtheorem{Prop}[Thm]{Proposition}

\newtheorem{Lem}[Thm]{Lemma}
\newtheorem{Defi}[Thm]{Definition}

\newtheorem{Th}{Theorem}


\newcounter{let} \setcounter{let}{0}
\loop\stepcounter{let}
\expandafter\edef\csname cal\alph{let}\endcsname%
{\noexpand\mathcal{\Alph{let}}}
\ifnum\thelet<26\repeat

\newcommand{\calll}{\mathscr{L}}

\newcommand{\EE}{\mathbf{E}}
\newcommand{\EEE}[2][2]{\EE_{#1}(#2)}
\newcommand{\II}{\mathscr{I}}
\newcommand{\IND}[2]{\II_{#1}(#2)}

\newcommand{\tdef}[2][]{\expandafter\newcommand\csname#2\endcsname%
{#1\textup{#2}}}
\tdef{Iso}   \tdef{Aut}    \tdef{Out}    \tdef{Inn}    \tdef{Hom}
\tdef{End}   \tdef{Inj}    \tdef{map}    \tdef{Ker}    \tdef{Ob}
\tdef{Mor}   \tdef{Res}    \tdef{Id}     \tdef{Fr}     \tdef{Spin} 
\tdef{rk}    \tdef{conj}   \tdef{incl}   \tdef{proj}   \tdef{diag} 
\tdef{trf}   \tdef{Sol}    \tdef{Ext}    
\tdef{Rep}   \tdef{pr}    \tdef{Inndiag} \tdef{Outdiag}  \tdef{expt}
\tdef{supp}  \tdef{Isom}   \tdef{ord}    \tdef{Coker}  
\tdef[_]{typ} \tdef[_]{cat} \tdef[^]{op} \tdef[^]{ab}

\newcommand{\fdef}[1]{\expandafter\newcommand\csname#1\endcsname%
{\mathfrak{#1}}}
\fdef{X}  \fdef{red}  \fdef{foc}  \fdef{hyp}  \fdef{Lie} \fdef{Y}

\newcommand{\bbdef}[1]{\expandafter\newcommand%
\csname#1\endcsname{\mathbb{#1}}}
\bbdef{C} \bbdef{F} \bbdef{R} \bbdef{Z} \bbdef{N} \bbdef{Q} \bbdef{K}

\newcommand{\itdef}[1]{\expandafter\newcommand\csname#1\endcsname%
{\textit{#1}}}
\itdef{PSL}  \itdef{PSU}  \itdef{SL}   \itdef{SU}  \itdef{GL} \itdef{GU}
\itdef{Sp}   \itdef{PSp}  \itdef{PSO}  \itdef{SO}  \itdef{SD} \itdef{PGU} 
\itdef{PGL}  \itdef{UT}   \itdef{Fi}   \itdef{Co}  \itdef{GO}
\itdef{He}   \itdef{Sz}   \itdef{McL}  \itdef{Ru}  \itdef{Suz} 
\itdef{HS}   \itdef{Ly}   

\newcommand{\ON}{\textit{O'N}}

\newcommand{\PSSL}{\textit{P}\varSigma\textit{L}}
\newcommand{\GGL}{\varGamma\textit{L}}

\newcommand{\SSL}{\varSigma\textit{L}}

\newcommand{\gee}{\varepsilon}
\newcommand{\sminus}{\smallsetminus}
\newcommand{\lie}[3]{\def\test{#2}\def\tst{G}\ifx\test\tst{{}^{#1}#2_{#3}}
\else{{}^{#1}\!#2_{#3}}\fi}
\renewcommand{\*}{\,\lower6pt\hbox{\Large{\textup{*}}}\,}
\newcommand{\syl}[2]{\textup{Syl}_{#1}(#2)}
\newcommand{\sylp}[1]{\syl{p}{#1}}

\renewcommand{\Im}{\textup{Im}}
\newcommand{\autf}{\Aut_{\calf}}

\newcommand{\outf}{\Out_{\calf}}

\newcommand{\defeq}{\overset{\textup{def}}{=}}

\newcommand{\mxfoura}[8]{\left(\begin{smallmatrix}#1&#2&#3&#4\\#5&#6&#7&#8}
\newcommand{\mxfourb}[8]{\\#1&#2&#3&#4\\#5&#6&#7&#8\end{smallmatrix}\right)}

\let\emptyset=\varnothing
\renewcommand{\:}{\colon}
\newcommand{\pcom}{{}^\wedge_p}

\newcommand{\nsg}{\trianglelefteq}

\newcommand{\til}[1]{\widetilde{#1}}

\newcommand{\gen}[1]{{\langle}#1{\rangle}}

\newcommand{\longleft}[1]{\;{\leftarrow%
\count255=0 \loop \mathrel{\mkern-6mu}%
    \relbar\advance\count255 by1\ifnum\count255<#1\repeat}\;}
\newcommand{\longright}[1]{\;{\count255=0 \loop \relbar\mathrel{\mkern-6mu}%
    \advance\count255 by1\ifnum\count255<#1\repeat\rightarrow}\;}
\newcommand{\Right}[2]{\overset{#2}{\longright#1}}
\newcommand{\RIGHT}[3]{\mathrel{\mathop{\kern0pt\longright#1}
        \limits^{#2}_{#3}}}

\newcommand{\LEFT}[3]{\mathrel{\mathop{\kern0pt\longleft#1}\limits^{#2}_{#3}}
}
\newcommand{\dRIGHT}[3]{\mathrel{%
   \mathop{\vcenter{\baselineskip=0pt\hbox{$\kern0pt\longright#1$}%
   \hbox{$\kern0pt\longright#1$}}}\limits^{#2}_{#3}}}
\newcommand{\LRIGHT}[3]{\mathrel{%
   \mathop{\vcenter{\baselineskip=0pt\hbox{$\kern0pt\longleft#1$}%
   \hbox{$\kern0pt\longright#1$}}}\limits^{#2}_{#3}}}
\newcommand{\RLEFT}[3]{\mathrel{%
   \mathop{\vcenter{\baselineskip=0pt\hbox{$\kern0pt\longright#1$}%
   \hbox{$\kern0pt\longleft#1$}}}\limits^{#2}_{#3}}}
\newcommand{\onto}[1]{\;{\count255=0 \loop \relbar\mathrel{\mkern-6mu}%
    \advance\count255 by1
    \ifnum\count255<#1 \repeat \twoheadrightarrow}\;}

\newcommand{\allbold}[1]{\textbf{\boldmath{#1}}}

\newenvironment{spor}[1]{\noindent\allbold{$G$ $\cong$ #1 : }}{\medskip}

\newcommand{\abelS}{---}
\newcommand{\trivS}{---}

\title{Automorphisms of fusion systems of sporadic simple groups}

\author{Bob Oliver}
\address{Université Paris 13, Sorbonne Paris Cité, LAGA, UMR 7539 du CNRS, 
99, Av. J.-B. Clément, 93430 Villetaneuse, France.}
\email{bobol@math.univ-paris13.fr}
\thanks{B. Oliver is partially supported by UMR 7539 of the CNRS}

\subjclass[2000]{Primary 20E25, 20D08. Secondary 20D20, 20D05, 20D45}
\keywords{Fusion systems, sporadic groups, Sylow subgroups, finite simple groups}

\begin{document} 

\begin{abstract} 
We prove here that with a very small number of exceptions, when $G$ is a 
sporadic simple group and $p$ is a prime such that the Sylow $p$-subgroups 
of $G$ are nonabelian, then $\Out(G)$ is isomorphic to the outer automorphism 
groups of the fusion and linking systems of $G$. In particular, the 
$p$-fusion system of $G$ is tame in the sense of \cite{AOV1}, and is tamely 
realized by $G$ itself except when $G\cong M_{11}$ and $p=2$. From the 
point of view of homotopy theory, 
these results also imply that $\Out(G)\cong\Out(BG\pcom)$ in many (but not 
all) cases.
\end{abstract}

\maketitle

This paper is centered around the comparison of certain outer automorphism 
groups associated to a sporadic simple group: outer automorphisms of the 
group itself, those of its fusion at different primes, and those of its 
classifying space completed at different primes. In most, but not all cases 
(under conditions made precise in Theorem \ref{t:tame}), these automorphism 
groups are all isomorphic. This comparison is important when studying 
extensions of fusion systems, and through that plays a role in Aschbacher's 
program (see, e.g., \cite{A-surv}) for reproving certain parts of the 
classification theorem from the point of view of fusion systems.

When $G$ is a finite group, $p$ is a prime, and $S\in\sylp{G}$, the 
\emph{$p$-fusion system} of $G$ is the category $\calf_S(G)$ whose objects 
are the subgroups of $G$, and which has morphism sets
	\[ \Mor_{\calf_S(G)}(P,Q) =
	\bigl\{\varphi\in\Hom(P,Q) \,\big|\, \varphi=c_x, ~ 
	\textup{some $x\in G$ with $xPx^{-1}\le Q$} \bigr\}. \]
A $p$-subgroup $P\le G$ is called \emph{$p$-centric} in $G$ if 
$Z(P)\in\sylp{C_G(P)}$; equivalently, if $C_G(P)=Z(P)\times C'_G(P)$ for 
some (unique) subgroup $C'_G(P)$ of order prime to $p$. 
The \emph{centric linking system} of $G$ at $p$ is the category 
$\call_S^c(G)$ whose objects are the subgroups of $S$ which are $p$-centric 
in $G$, and where
	\[ \Mor_{\call_S^c(G)}(P,Q) = T_G(P,Q)/C'_G(P) 
	\quad\textup{where}\quad
	T_G(P,Q) = \bigl\{x\in G \,\big|\, xPx^{-1}\le Q \bigr\}. \]
Note that there is a natural functor $\pi\:\call_S^c(G)\too\calf_S(G)$ 
which is the inclusion on objects, and which sends the class of $x\in 
T_G(P,Q)$ to $c_x\in\Hom(P,Q)$. Outer automorphism groups of these systems 
were defined in \cite{BLO1} and later papers (see below). We say that 
$\calf=\calf_S(G)$ is \emph{tamely realized} by $G$ if the natural 
homomorphism $\kappa_G\:\Out(G)\too\Out(\call_S^c(G))$ is surjective and 
splits. The fusion system $\calf$ is \emph{tame} if it is tamely realized 
by some finite group.

In terms of homotopy theory, it was shown in \cite[Theorem B]{BLO1} that for 
a finite group $G$ and $S\in\sylp{G}$, there is a natural isomorphism 
$\Out(\call_S^c(G))\cong\Out(BG\pcom)$. Here, $BG\pcom$ is the 
$p$-completion, in the sense of Bousfield-Kan, of the classifying space of 
$G$, and $\Out(X)$ means the group of homotopy classes of self equivalences 
of the space $X$. Thus $\calf_S(G)$ is tamely realized by $G$ if 
the natural map from $\Out(G)$ to $\Out(BG\pcom)$ is split surjective.


When $p=2$, our main result is easily stated: if $G$ is a sporadic 
simple group, then the 2-fusion system of $G$ is simple except when $G\cong 
J_1$, and is tamely realized by $G$ except when $G\cong M_{11}$. The 
2-fusion system of $M_{11}$ is tamely realized by $\PSL_3(3)$.

For $p$ odd, information about fusion systems of the sporadic groups at odd 
primes is summarized in Table \ref{tbl:odd-p}. In that table, for a given 
group $G$ and prime $p$ and $S\in\sylp{G}$,
\begin{itemize} 
\item a dash ``---'' means that $S$ is abelian or trivial;
\item ``constr.'' means that $\calf_S(G)$ is constrained; and 
\item an almost simple group $L$ in brackets means that $\calf_S(G)$ is 
almost simple but not simple, and is shown in \cite[16.10]{A-gfit} to be 
isomorphic to the fusion system of $L$.
\end{itemize}
For all other pairs $(G,p)$, $\calf$ is simple by \cite[16.10]{A-gfit}, and 
we indicate what is known about the nature of 
$\kappa_G\:\Out(G)\too\Out(\call_S^c(G))$. In addition, 
\begin{itemize} 
\item a dagger (\es) marks the pairs $(G,p)$ for which $S$ is extraspecial 
of order $p^3$.
\end{itemize}

\begin{table}[ht] 
\begin{center}
\renewcommand{\arraystretch}{1.3}
\begin{tabular}{|c|c||c|c|c|c|}
\hline
\allbold{$G$} & \allbold{$|\Out(G)|$} & \allbold{$p=3$}
& \allbold{$p=5$} & \allbold{$p=7$} & \allbold{$p\ge11$} \\ 
\hline\hline
$M_{12}$ & 2 & $\kappa$ isom.\es & \abelS & \trivS & \abelS \\
\hline
$M_{24}$ &1&  [$M_{12}:2$]\es & \abelS & \abelS & \abelS \\
\hline
$J_2$ &2& constr.\es & \abelS & \abelS & \abelS \\
\hline
$J_3$ &2& constr. & \abelS & \trivS & \abelS \\
\hline
$J_4$ &1& [$\lie2F4(2)$]\es & \abelS & \abelS & 11: constr.\es \\
\hline
$\Co_3$ &1& $\kappa$ isom. & constr.\es & \abelS & \abelS \\
\hline
$\Co_2$ &1& $\kappa$ isom. & constr.\es & \abelS & \abelS \\
\hline
$\Co_1$ &1& $\kappa$ isom. & [$\SO_5(5)$] & \abelS & \abelS \\
\hline
HS &2& \abelS & constr.\es & \abelS & \abelS \\
\hline
\McL &2& $\kappa$ isom. & constr.\es & \abelS & \abelS \\
\hline
Suz &2& $\kappa$ isom. & \abelS & \abelS & \abelS \\
\hline
He &2& $\Out(\call)=1$\es & \abelS & $\kappa$ isom.\es & \abelS \\
\hline
Ly &1& $\kappa$ isom. & $\kappa$ isom. & \abelS & \abelS \\
\hline
Ru &1& [$\lie2F4(2)$]\es & [$L_3(5):2$]\es & \abelS & \abelS \\
\hline
\ON &2& \abelS & \abelS & $\kappa$ isom.\es  & \abelS \\
\hline
$\Fi_{22}$ &2& $\kappa$ isom. & \abelS & \abelS & \abelS \\
\hline
$\Fi_{23}$ &1& $\kappa$ isom. & \abelS & \abelS & \abelS \\
\hline
$\Fi'_{24}$ &2& $\kappa$ isom. & \abelS & $\kappa$ isom.\es & \abelS \\
\hline
$F_5$ &2& $\kappa$ isom. & $\kappa$ isom. & \abelS & \abelS \\
\hline
$F_3$ &1& $\kappa$ isom. & $\kappa$ isom.\es & \abelS & \abelS \\
\hline
$F_2$ &1& $\kappa$ isom. & $\kappa$ isom. & \abelS & \abelS \\
\hline
$F_1$ &1& $\kappa$ isom. & $\kappa$ isom. & 
$\kappa$ isom. & 13: $\kappa$ isom.\es \\
\hline
\end{tabular}
\end{center}
\caption{Summary of results for odd $p$}
\label{tbl:odd-p}
\end{table}

Here, a fusion system $\calf=\calf_S(G)$ is \emph{constrained} if it contains a 
normal $p$-subgroup $Q\nsg\calf$ such that $C_S(Q)\le Q$. The fusion system 
$\calf$ is \emph{simple} if it has no proper nontrivial normal fusion 
subsystems. It is \emph{almost simple} if it contains a proper normal 
subsystem $\calf_0\nsg\calf$ which is simple, and such that 
$C_\calf(\calf_0)=1$. We refer to \cite[Definitions I.4.1 \& 
I.6.1]{AKO} for the definitions of normal $p$-subgroups and normal fusion 
subsystems, and to \cite[\S\,6]{A-gfit} for the definition of the 
centralizer of a normal subsystem.

Thus when $G$ is a sporadic simple group and $p$ is an odd prime such that 
the $p$-fusion system $\calf$ of $G$ is simple, we show in all cases that 
$\calf$ is tamely realized by $G$, and in fact that 
$\Out(G)\cong\Out(\call_S^c(G))$ except when $G\cong\He$ and $p=3$ (Theorem 
\ref{t:tame}).


Before going further, we need to define more precisely the automorphism 
groups which we are working with. All of the definitions given here apply to 
abstract fusion and linking systems (see, e.g., \cite[\S\,III.4.3]{AKO}), 
but for simplicity, we always assume that $\calf=\calf_S(G)$ and 
$\call=\call_S^c(G)$ for some finite group $G$ with $S\in\sylp{G}$. 

Automorphisms of $\calf=\calf_S(G)$ are straightforward. An automorphism 
$\alpha\in\Aut(S)$ is \emph{fusion preserving} if it induces an 
automorphism of the category $\calf$ (i.e., a functor from $\calf$ to 
itself which is bijective on objects and on morphisms). Set 
	\begin{align*} 
	\Aut(\calf)=\Aut(S,\calf) &= \bigl\{\alpha\in\Aut(S)\,\big|\, 
	\textup{$\alpha$ is fusion preserving}\bigr\} \\
	\Out(\calf)=\Out(S,\calf) &= \Aut(S,\calf)/\autf(S).
	\end{align*}
Here, by definition, $\autf(S)=\Aut_G(S)$: the automorphisms induced by 
conjugation in $N_G(S)$. These groups have been denoted $\Aut(S,\calf)$ and 
$\Out(S,\calf)$ in earlier papers, to emphasize that they are groups of 
automorphisms of $S$, but it seems more appropriate here to regard them as 
automorphisms of the \emph{fusion system} $\calf$ (as opposed to the category 
$\calf$).

Now assume $\call=\call_S^c(G)$. For each $P\in\Ob(\call)$, 
set $\iota_P=[1]\in\Mor_\call(P,S)$ (the ``inclusion'' of $P$ 
in $S$ in the category $\call$), and set 
$[\![P]\!]=\bigl\{[g]\,\big|\,g\in P\bigr\}\le\Aut_\call(P)$. Define
	\begin{align*} 
	\Aut(\call)=\Aut\typ^I(\call) &= \bigl\{ \beta\in\Aut\cat(\call) \,\big|\, 
	\beta(\iota_P)=\iota_{\beta(P)},~
	\beta([\![P]\!])=[\![\beta(P)]\!],~ \forall\, 
	P\in\calf^c \bigr\} \\
	\Out(\call)=\Out\typ(\call) &= \Aut(\call)\big/\gen{c_x\,|\,
	x\in N_G(S)}. 
	\end{align*}
Here, $\Aut\cat(\call)$ is the group of automorphisms of $\call$ as a 
category, and $c_x\in\Aut(\call)$ for $x\in N_G(S)$ sends $P$ to $\9xP$ and $[g]$ to 
$[\9xg]$. There are natural homomorphisms
	\[ \Out(G) \underset{\cong\Out(BG\pcom)}{\Right4{\kappa_G} 
	\Out(\call) \Right4{\mu_G}} \Out(\calf)
	\qquad\textup{and}\qquad \4\kappa_G=\mu_G\circ\kappa_G. \]
Here, $\kappa_G$ is defined by sending the class of $\alpha\in 
\Aut(G)$, chosen so that $\alpha(S)=S$, to the class of 
$\5\alpha\in\Aut(\call)$, where $\5\alpha(P)=\alpha(P)$ and 
$\5\alpha([g])=[\alpha(g)]$. For $\beta\in\Aut(\call)$, $\mu_G$ sends the 
class of $\beta\in\Aut(\call)$ to the class of 
	\[ \5\beta = \Bigl( S \RIGHT5{g\mapsto[g]}{\cong} [\![S]\!] 
	\RIGHT5{\beta|_{[\![S]\!]}}{\cong} 
	[\![S]\!] \RIGHT5{[g]\mapsto g}{\cong} S \Bigr) \in \Aut(\calf) \le 
	\Aut(S). \]
Then $\4\kappa_G\:\Out(G)\too\Out(\calf)$ is 
induced by restriction to $S$. See \cite[\S\,III.4.3]{AKO} or 
\cite[\S\,1.3]{AOV1} for more details on these definitions. 


By recent work of Chermak, Oliver, and Glauberman and Lynd, the nature 
of $\mu_G$ is now fairly well known in all cases.

\begin{Prop}[{\cite[Theorem C]{O-Ch}, \cite[Theorem 1.1]{GLynd}}] 
\label{mu-onto}
For each prime $p$, and each finite group $G$ with $S\in\sylp{G}$, 
$\mu_G\:\Out(\call_S^c(G))\Right2{}\Out(\calf_S(G))$ is surjective, and is 
an isomorphism if $p$ is odd. 
\end{Prop}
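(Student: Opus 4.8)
The plan is to deduce both assertions from the obstruction theory for linking systems developed in \cite{BLO1}. Write $\calf=\calf_S(G)$ and $\call=\call_S^c(G)$, let $\calo^c(\calf)$ be the orbit category of $\calf$-centric subgroups, and let $\calz_\calf$ be the functor on $\calo^c(\calf)$ with $\calz_\calf(P)=Z(P)$. The first step is to invoke the exact sequence
	\[ 1\too\varprojlim\nolimits^1_{\calo^c(\calf)}(\calz_\calf)\too\Out(\call)
	\Right4{\mu_G}\Out(\calf)\Right4{\partial}\varprojlim\nolimits^2_{\calo^c(\calf)}(\calz_\calf) \]
from \cite{BLO1} (see also \cite{AKO}): here $\partial$ is the obstruction map arising from the natural action of $\Out(\calf)$ on the set of isomorphism classes of centric linking systems over $\calf$, the first arrow is injective with image $\Ker(\mu_G)$, and $\partial^{-1}(0)=\Im(\mu_G)$. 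Thus the Proposition reduces to two vanishing statements for higher limits: $\varprojlim\nolimits^2_{\calo^c(\calf)}(\calz_\calf)=0$ for all $p$, which forces $\partial\equiv0$ and so makes $\mu_G$ surjective; and $\varprojlim\nolimits^1_{\calo^c(\calf)}(\calz_\calf)=0$ for $p$ odd, which then makes $\mu_G$ injective as well.

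The vanishing $\varprojlim\nolimits^2_{\calo^c(\calf)}(\calz_\calf)=0$ (for every $p$) is one half of the Chermak--Oliver theorem on existence and uniqueness of centric linking systems (\cite{O-Ch}, with the classification-free argument in \cite{GLynd}); since $\call_S^c(G)$ manifestly exists, only this ``uniqueness'' half is needed. Unwound, the argument for surjectivity of $\mu_G$ runs: given $\alpha\in\Aut(S,\calf)$, transport $\call$ across $\alpha$ to obtain a second centric linking system $\alpha^*\call$ over $\calf$, together with a tautological isomorphism $\call\to\alpha^*\call$ covering $\alpha$; the uniqueness theorem yields an isomorphism $\alpha^*\call\to\call$ covering $\Id_\calf$; and the composite is an automorphism of $\call$ whose class in $\Out(\call)$ maps under $\mu_G$ to that of $\alpha$. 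The one delicate point is normalization: one must check that the uniqueness isomorphism can be taken to preserve the inclusions $\iota_P$ and the distinguished subgroups $[\![P]\!]$, so that the composite lies in $\Aut\typ^I(\call)$ and not merely in $\Aut\cat(\call)$; this is routine (cf.\ \cite{AKO}).

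The vanishing $\varprojlim\nolimits^i_{\calo^c(\calf)}(\calz_\calf)=0$ for all $i\ge1$ when $p$ is odd is Oliver's vanishing theorem (see \cite{O-Ch} and the references therein; the classification-free form is in \cite{GLynd}), and this is \emph{the main obstacle} --- I would not attempt a soft proof. The standard route computes these higher limits via a normalized chain complex on $\calo^c(\calf)$, reduces --- using the bar construction and the fact that only $\calf$-centric $\calf$-radical subgroups contribute --- to the vanishing of certain higher-limit groups built from the $\outf(P)$-modules $Z(P)$ for $\calf$-radical $P$, and then, by means of exact sequences of such modules and an analysis of which groups $\outf(P)$ can contain a strongly $p$-embedded subgroup, to a short list of assertions about the fusion systems of the known finite simple groups at odd primes, which are verified case by case using the classification. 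The role of \cite{GLynd} is to supply the pieces of this argument that can be made independent of the classification --- notably a statement of the type of Glauberman's $Z^*$-theorem, proved by local methods --- and to record the conclusion in the form quoted above.

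Finally, the hypothesis that $p$ be odd cannot be dropped from the second assertion: at $p=2$ there is no reason for $\varprojlim\nolimits^1_{\calo^c(\calf)}(\calz_\calf)$ to vanish, so $\mu_G$ may fail to be injective, and only its surjectivity --- which rests solely on the vanishing of $\varprojlim\nolimits^2_{\calo^c(\calf)}(\calz_\calf)$, valid for every $p$ --- persists.
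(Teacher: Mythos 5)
Your proposal is correct and follows essentially the same route as the paper, which gives no argument of its own but simply cites \cite[Theorem C]{O-Ch} and \cite[Theorem 1.1]{GLynd}: those references establish exactly the vanishing of $\varprojlim^2$ (all $p$) and $\varprojlim^1$ ($p$ odd) of the center functor that you feed into the standard obstruction-theory exact sequence from \cite{BLO1}. Your reconstruction of how the cited vanishing theorems yield surjectivity and (for odd $p$) injectivity of $\mu_G$ is the intended derivation.
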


In fact, \cite{O-Ch} and \cite{GLynd} show that the conclusion of 
Proposition \ref{mu-onto} holds for all (abstract) fusion 
systems and associated linking systems. 

When $G$ is a sporadic simple group and $p$ is odd, a more direct proof 
that $\mu_G$ is an isomorphism is given in \cite[Propositions 4.1 \& 
4.4]{limz-odd}.

The fusion system $\calf=\calf_S(G)$ is \emph{tamely realized} by $G$ if 
$\kappa_G$ is split surjective, and is \emph{tame} if it is tamely realized 
by some finite group $G^*$ with $S\in\sylp{G^*}$ and $\calf=\calf_S(G^*)$. 
We refer to \cite[Theorems A \& B]{AOV1} or \cite[\S\,III.6.1]{AKO} for the 
original motivation for this definition. In practice, it is in many cases 
easier to study the homomorphism $\4\kappa_G$, which is why we include 
information about $\mu_G$ here. The injectivity of $\4\kappa_G$, when $p=2$ 
and $G$ is a sporadic simple group, follows from a theorem of Richard Lyons 
\cite[Theorem 1.1]{Lyons} (see the proof of Proposition \ref{kappa-2-big}). 

Fusion systems of alternating groups were shown to be tame in 
\cite[Proposition 4.8]{AOV1}, while those of finite groups of Lie type 
(including the Tits group) were shown to be tame in \cite[Theorems C \& 
D]{BMO2}. So the following theorem completes the study of tameness for 
fusion systems of the known finite nonabelian simple groups.


\begin{Th} \label{t:tame}
Fix a sporadic simple group $G$, a prime $p$ which divides $|G|$, and 
$S\in\sylp{G}$. Set $\calf=\calf_S(G)$ and $\call=\call_S^c(G)$. Then 
$\calf$ is tame. Furthermore, $\kappa_G$ and $\mu_G$ are isomorphisms 
(hence $\calf$ is tamely realized by $G$) if $p=2$, or if $p$ is odd and 
$S$ is nonabelian, with the following two exceptions:
\begin{enuma} 
\item $G\cong M_{11}$ and $p=2$, in which case $\Out(G)=1$ and 
$|\Out(\calf)|=|\Out(\call)|=2$; and 

\item $G\cong\He$ and $p=3$, in which case $|\Out(G)|=2$ and 
$\Out(\calf)=\Out(\call)=1$. 

\end{enuma}
\end{Th}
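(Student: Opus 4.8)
The plan is to reduce the theorem to a case-by-case analysis over the finite list of pairs $(G,p)$ with $S\in\sylp{G}$ nonabelian, using the three homomorphisms $\kappa_G$, $\mu_G$, $\4\kappa_G=\mu_G\circ\kappa_G$ and the general facts already available. By Proposition \ref{mu-onto}, $\mu_G$ is always surjective, and is an isomorphism when $p$ is odd; so for odd $p$ with $S$ nonabelian it suffices to show $\kappa_G$ is an isomorphism, and for $p=2$ it suffices to show $\kappa_G$ is split surjective together with the separate identification of $\Out(\call)$ with $\Out(\calf)$ when the latter is needed. Since $\4\kappa_G$ is injective for $p=2$ and $G$ sporadic (this is Lyons' theorem \cite[Theorem 1.1]{Lyons}, cf.\ Proposition \ref{kappa-2-big}), one first records that for $p=2$, $\kappa_G$ is automatically injective; hence the real content at $p=2$ is surjectivity of $\kappa_G$, and likewise the surjectivity issue dominates throughout.

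First I would split the work by the size of $\Out(G)$. When $\Out(G)=1$ (a large fraction of the entries in Table \ref{tbl:odd-p}, and many of the $p=2$ cases), the claim ``$\kappa_G$ and $\mu_G$ are isomorphisms'' reduces to showing $\Out(\calf)=\Out(\call)=1$; by Proposition \ref{mu-onto} it is enough to show $\Out(\calf)=1$, i.e.\ that every fusion-preserving automorphism of $S$ is realized by an element of $N_G(S)$. This I would attack by locating an $\calf$-characteristic subgroup or a small characteristic chain in $S$ on which $\autf$ already acts with full automizer, or equivalently by computing $\Aut(S,\calf)$ directly from the known fusion data (essential subgroups, their automizers) of the sporadic group in question. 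When $\Out(G)\ne1$ (only the groups with $|\Out(G)|=2$, plus $M_{11}$ and $\He$ as exceptions), one has to produce the splitting: concretely, show that the outer automorphism of $G$ — realized on $S$ after adjusting by an inner automorphism so it normalizes $S$ — lifts to an element of $\Aut(\call)$, and that $\Out(\calf)$ (hence $\Out(\call)$) has order exactly $2$, so that $\kappa_G$ is onto; injectivity of $\4\kappa_G$ (for $p=2$) or of $\mu_G$ (for odd $p$) then forces $\kappa_G$ to be an isomorphism, which in particular makes it split. For the two exceptional pairs one instead exhibits the discrepancy: for $(M_{11},2)$ one checks $\Out(\calf_S(M_{11}))$ has order $2$ while $\Out(M_{11})=1$, and then produces the tame realization by $\PSL_3(3)$ (whose $2$-fusion system agrees with that of $M_{11}$, with $\Out(\PSL_3(3))\cong C_2$ mapping onto $\Out(\calf)$); for $(\He,3)$ one checks $\Out(\call)=\Out(\calf)=1$ while $|\Out(\He)|=2$, so the nontrivial outer automorphism of $\He$ dies in $\Out(\call)$ — consistent with tameness since $\kappa_G$ is still (vacuously) split onto the trivial group.

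For the uniform input I would lean on two kinds of computation. (1) Obstruction theory: by \cite{BLO1} the group $\Out(\call)$ (and the image/kernel of $\kappa_G$) is controlled by the higher limits $\lim^i(\calz_\calf)$ of the center functor over the orbit category; for sporadic groups these have been computed or vanish in the relevant degrees, which both bounds $|\Out(\call)|$ and, via $\mu_G$, ties it to $|\Out(\calf)|$. (2) Explicit fusion data: for each $(G,p)$ in the list one has the Sylow $p$-subgroup $S$, its $\calf$-essential subgroups and their $\Out_\calf$-automizers (from the literature on maximal $p$-local subgroups of the sporadics), from which $\Aut(S,\calf)$ and thus $\Out(\calf)$ are computed by hand. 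The cases flagged with a dagger — $S$ extraspecial of order $p^3$ — I would treat by a single argument using the known classification of fusion systems on extraspecial $p^{1+2}$ and their automorphism groups. The main obstacle, and where almost all the labor lies, is the surjectivity of $\kappa_G$ in the cases with $\Out(G)\ne1$: one must actually lift a chosen representative of the nontrivial outer automorphism to a type-preserving automorphism of the linking system, i.e.\ check compatibility with the inclusions $\iota_P$ and the distinguished subgroups $\dsg{P}$ across all $p$-centric $P$; equivalently, show the relevant class in $\lim^2$ obstructing the lift vanishes. This is delicate precisely for $\He$ at $p=3$ (where the lift genuinely fails, and one must prove $\Out(\call)=1$ rather than $2$) and, at $p=2$, for $M_{11}$ (where $\Out(\call)=2$ exceeds $\Out(G)=1$); in all other cases the lift exists, and verifying it is the crux of each individual argument.
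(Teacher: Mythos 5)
Your overall skeleton matches the paper's reduction (use Proposition \ref{mu-onto}, injectivity of $\4\kappa_G$ from Lyons' theorem at $p=2$, then bound $|\Out(\calf)|$ case by case), but two of the load-bearing steps are either missing or replaced by assertions that do not hold up. First, for $p=2$ you deduce $\Out(\call)=1$ from $\Out(\calf)=1$ ``by Proposition \ref{mu-onto}''; that proposition only gives surjectivity of $\mu_G$ at $p=2$, so it tells you nothing about $\Ker(\mu_G)$. Establishing $\Ker(\mu_G)=1$ is roughly half the paper: Propositions \ref{kappa-2-small} and \ref{mu-2-big}, the latter proved group-by-group in Sections \ref{s:Ker(mu)} and \ref{s:Ker(mu)=1} by determining the set $\5\calz(\calf)$ from the maximal $2$-local subgroups and applying Proposition \ref{Ker(mu):AOV1}. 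Your fallback --- that the relevant higher limits of the center functor ``have been computed or vanish'' for the sporadic groups --- is not something you can cite; proving that vanishing is essentially the same computation, and your proposal gives no method for it.

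Second, you locate the main labor in ``lifting'' the outer automorphism of $G$ to $\Aut(\call)$ and in a $\lim^2$ obstruction to that lift. No such obstruction exists: $\kappa_G$ is defined directly on any $\alpha\in\Aut(G)$ with $\alpha(S)=S$ by $\5\alpha(P)=\alpha(P)$, $\5\alpha([g])=[\alpha(g)]$, so the image of $\Out(G)$ in $\Out(\call)$ comes for free. The actual crux is the upper bound $|\Out(\calf)|\le|\Out(G)|$, which your plan addresses only by saying one computes $\Aut(S,\calf)$ ``by hand'' from the essential subgroups --- unworkable for $S$ of order up to $2^{46}$. The paper's mechanism is different and essential: Lemma \ref{l:HtoF}(b) reduces $\Out(\calf)$ to $\Out(H)$ for a single $p$-local subgroup $H=N_G(Q)\ge S$, and Lemma \ref{Out(Q:H)} bounds $\Out(H)$ via the exact sequence $1\to H^1(H^*;Z(Q))\to\Out(H)\to N_{\Out(Q)}(H^*)/H^*$ together with absolute irreducibility of $Q/\Fr(Q)$. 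Without some substitute for these two reductions, the case-by-case analysis you outline cannot be carried out. (Minor points: your treatment of the two exceptional pairs and of the extraspecial $p^{1+2}$ cases is consistent with the paper, the latter being a plausible alternative via the classification of fusion systems over $p^{1+2}_+$; and for $(\He,3)$ the map $\kappa_G$ does not ``fail to lift'' --- it is surjective with nontrivial kernel.)
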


\begin{proof} By Proposition \ref{mu-onto}, $\mu_G$ is surjective in all 
cases, and is an isomorphism if $p$ is odd. When $p=2$, $\mu_G$ is 
injective (hence an isomorphism) by Propositions \ref{kappa-2-small} (when 
$|S|\le2^9$) and \ref{mu-2-big} (when $|S|\ge2^{10}$). Thus in all cases, 
$\kappa_G$ is an isomorphism if and only if $\4\kappa_G=\mu_G\circ\kappa_G$ 
is an isomorphism.

When $p=2$, $\4\kappa_G$ is an isomorphism, with the one exception $G\cong 
M_{11}$, by Propositions \ref{kappa-2-small} (when $|S|\le2^9$) and 
\ref{kappa-2-big} (when $|S|\ge2^{10}$). When $p$ is odd, $S$ is 
nonabelian, and $\calf$ is not simple, then $\4\kappa_G$ is an isomorphism 
by Proposition \ref{kappa-nonsimple}. When $p$ is odd and $\calf$ is 
simple, $\4\kappa_G$ is an isomorphism except when $G\cong\He$ and 
$p=3$ by Proposition \ref{kappa-odd}. The two exceptional cases are handled 
in Propositions \ref{kappa-2-small} and \ref{kappa-odd}. 
\end{proof}

In the first half of the paper, we compare $\Out(G)$ with $\Out(\calf)$: 
first listing some general results in Section \ref{s:background}, and then 
applying them to determine the nature of $\4\kappa_G$ in Sections 
\ref{s:kappa-2} (for $p=2$) and \ref{s:kappa-odd} (for $p$ odd). We then 
compare $\Out(\calf)$ with $\Out(\call)$ (when $p=2$) in the last half of 
the paper: general techniques for determining $\Ker(\mu_G)$ are listed in 
Section \ref{s:Ker(mu)}, and these are applied in Section \ref{s:Ker(mu)=1} 
to finish the proof of the main theorem. 

The author plans, in a future paper with Jesper Grodal, to look more 
closely at the fundamental groups of geometric realizations of the 
categories $\call_S^c(G)$ when $G$ is a sporadic group. This should give 
alternative proofs for several of the cases covered by Theorem 
\ref{t:tame}. 

I would like to thank Michael Aschbacher for explaining to me the potential 
importance of these results. Kasper Andersen made some computer 
computations several years ago involving the Rudvalis sporadic group at 
$p=2$; while they're not used here, they probably gave me hints as to how 
to proceed in that case (one of the hardest). I also thank the referee for 
his many suggestions which helped simplify or clarify several arguments. I would 
especially like to thank Richard Lyons for the notes \cite{Lyons} he wrote 
about automorphisms of sporadic groups, without which I might not have 
known how to begin this project. 

\textbf{Notation: } We mostly use Atlas notation \cite[\S\,5.2]{atlas} for 
groups, extensions, extraspecial groups, etc., as well as for names ($\2a$, 
$\2b$, $\3a$, \dots) of conjugacy classes of elements. An elementary 
abelian $2$-group has type $\2a^n$ if it is $\2a$-pure of rank $n$ 
(similarly for an elementary abelian $3$-group of type $\3a^n$); it has 
type $\2a_i\0b_j\dots$ if it contains $i$ elements of class $\2a$, $j$ of 
class $\2b$, etc. Also, $A_n$ and $S_n$ denote the alternating and 
symmetric groups on $n$ letters, $E_{p^k}$ (for $p$ prime) is an elementary 
abelian $p$-group of order $p^k$, and $\UT_n(q)$ (for $n\ge2$ and $q$ a 
prime power) is the group of upper triangular matrices in $\GL_n(q)$ with 
$1$'s on the diagonal. As usual, $G^\#=G\sminus\{1\}$ is the set of 
nonidentity elements of a group $G$, and $Z_2(S)\le S$ (for a $p$-group 
$S$) is the subgroup such that $Z_2(S)/Z(S)=Z(S/Z(S))$. For groups $H\le G$ 
and elements $g,h\in G$, $\9gh=ghg^{-1}$ and $\9gH=gHg^{-1}$. For each pair 
of groups $H\le G$, 
	\[ \Aut_G(H)=\{(x\mapsto \9gx)\,|\,g\in N_G(H)\}\le\Aut(H)
	\quad\textup{and}\quad \Out_G(H)=\Aut_G(H)/\Inn(H). \]

We assume in all cases the known order of $\Out(G)$ for sporadic groups 
$G$, without giving references each time.

\bigskip

\section{Automorphism groups of fusion systems: generalities}
\label{s:background}

We give here some techniques which will be used to determine the nature of 
$\4\kappa_G$. We begin with the question of injectivity. Recall that 
$|\Out(G)|\le2$ for each sporadic simple group $G$.

\begin{Lem} \label{l:kappa-inj}
Fix a prime $p$. Let $G$ be a finite group, fix $S\in\sylp{G}$, and set 
$\calf=\calf_S(G)$. 
\begin{enuma} 


\item For each $\alpha\in\Aut(G)$, the class $[\alpha]\in\Out(G)$ lies in  
$\Ker(\4\kappa_G)$ if and only if there is $\alpha'\in[\alpha]$ such that 
$|S|\bmid|C_G(\alpha')|$.

\item Assume $|\Out(G)|=2$ and $p$ is odd. If there is no 
$\alpha\in\Aut(G)$ such that $|\alpha|=2$ and $|S|\bmid|C_G(\alpha)|$, then 
$\4\kappa_G$ is injective.

\item Assume $|\Out(G)|=2$. If $\Out_{\Aut(G)}(Q)>\Out_G(Q)$ for some 
$Q\nsg S$, then $\4\kappa_G$ is injective. 

\end{enuma}
\end{Lem}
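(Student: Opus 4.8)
The plan is to establish part~(a) — the key fact — by a standard Frattini/Sylow-conjugation argument, and then obtain (b) and (c) as quick consequences of it together with $|\Out(G)|=2$. For (a), I would first note that $[\alpha]\in\Ker(\4\kappa_G)$ is equivalent to the assertion that some representative $\alpha'\in[\alpha]$ restricts to $\Id_S$ on $S$. Indeed, choosing the representative $\alpha$ so that $\alpha(S)=S$ (possible since all Sylow $p$-subgroups are $G$-conjugate), we have $\4\kappa_G([\alpha])=[\alpha|_S]\in\Out(\calf)$, and this is trivial precisely when $\alpha|_S\in\autf(S)=\Aut_G(S)$, say $\alpha|_S=(c_x)|_S$ with $x\in N_G(S)$; then $\alpha'\defeq c_x^{-1}\circ\alpha\in[\alpha]$ has $\alpha'|_S=\Id_S$, and conversely such an $\alpha'$ clearly lies in the kernel. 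Now if $\alpha'|_S=\Id_S$ then $S\le C_G(\alpha')$, so $|S|\bmid|C_G(\alpha')|$. For the reverse implication, suppose $\alpha'\in[\alpha]$ with $|S|\bmid|C_G(\alpha')|$: since $|S|=|G|_p$ and $C_G(\alpha')$ is a subgroup, a Sylow $p$-subgroup $S'$ of $C_G(\alpha')$ has order $|S|$, hence $S'\in\sylp{G}$; pick $g\in G$ with $S'=\9gS$. Then $\beta\defeq c_g^{-1}\circ\alpha'\circ c_g\in[\alpha]$ fixes $S$ pointwise (as $c_g(S)=S'\le C_G(\alpha')$), so $[\alpha]\in\Ker(\4\kappa_G)$ by what was just shown. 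This proves~(a).

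For (b), I argue by contraposition. If $\4\kappa_G$ is not injective, then since $|\Out(G)|=2$ the nontrivial class of $\Out(G)$ lies in $\Ker(\4\kappa_G)$, so by~(a) there is $\alpha'\in\Aut(G)$ with $[\alpha']\ne1$ and $|S|\bmid|C_G(\alpha')|$. Since $[\alpha']$ has order $2$ in $\Out(G)$, the order $n$ of $\alpha'$ is even; put $\beta\defeq\alpha'^{n/2}$, an automorphism of order exactly $2$. Every fixed point of $\alpha'$ is fixed by $\beta$, so $C_G(\alpha')\le C_G(\beta)$ and hence $|S|\bmid|C_G(\alpha')|\bmid|C_G(\beta)|$. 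Thus $\beta$ is an order-$2$ automorphism of $G$ with $|S|\bmid|C_G(\beta)|$ — regardless of whether $\beta$ is inner — contradicting the hypothesis of~(b).

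For (c), again by contraposition: if $\4\kappa_G$ is not injective, then by~(a) some nontrivial class of $\Out(G)$ has a representative $\beta\in\Aut(G)$ with $\beta|_S=\Id_S$. Fix any $Q\le S$ and any $\varphi\in\Aut(G)$ with $\varphi(Q)=Q$. If $\varphi\in\Inn(G)$, say $\varphi=c_g$, then $gQg^{-1}=Q$ forces $g\in N_G(Q)$ and $\varphi|_Q\in\Aut_G(Q)$. Otherwise $[\varphi]\ne1$ in $\Out(G)$, so (as $|\Out(G)|=2$) $\varphi=c_g\circ\beta$ for some $g\in G$; then $\varphi(Q)=gQg^{-1}=Q$, using $\beta|_S=\Id_S$, again gives $g\in N_G(Q)$ and $\varphi|_Q=(c_g)|_Q\in\Aut_G(Q)$. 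Hence $\Aut_{\Aut(G)}(Q)=\Aut_G(Q)$, so $\Out_{\Aut(G)}(Q)=\Out_G(Q)$, for every $Q\le S$ — contradicting the existence of some $Q\nsg S$ with $\Out_{\Aut(G)}(Q)>\Out_G(Q)$.

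I expect none of these steps to be genuinely hard. The one point that needs a little care is in~(b), namely replacing a possibly high-order automorphism that fixes a Sylow $p$-subgroup pointwise by an honest involution with the same centralizer containment, together with the observation that it does no harm if that involution turns out to be inner. The Sylow-conjugation manipulation in~(a) is the real engine of the whole lemma.
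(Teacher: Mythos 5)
Your proof is correct and follows essentially the same route as the paper's: part (a) is the identical Sylow‑conjugation argument, part (b) is the identical trick of passing to the power $\alpha^{n/2}$, and part (c) is simply the contrapositive of the paper's direct argument (the paper adjusts the witness $\beta$ so that $\beta(S)=S$ via a Frattini‑type argument inside $N_G(Q)$ and shows $\4\kappa_G([\beta])\ne1$, whereas you show that an outer automorphism fixing $S$ pointwise would force $\Out_{\Aut(G)}(Q)=\Out_G(Q)$ for every $Q\le S$). Nothing is missing.
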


\begin{proof} \textbf{(a) } We can assume $\alpha$ is chosen so that 
$\alpha(S)=S$. If $[\alpha]\in\Ker(\4\kappa_G)$, then 
$\alpha|_S\in\Aut_G(S)$: conjugation by some $g\in N_G(S)$. Set 
$\alpha'=\alpha\circ c_g^{-1}\in\Aut(G)$; then $[\alpha']=[\alpha]$ in 
$\Out(G)$, and $C_G(\alpha')\ge S$.

Conversely, assume $|S|\bmid|C_G(\alpha')|$. Then $C_G(\alpha')\ge\9gS$ 
for some $g\in G$. Set $\alpha''=c_g\circ\alpha'\circ c_g$ (composing 
from right to left), where $c_g\in\Inn(G)$ and $c_g(S)=\9gS$. Then 
$[\alpha'']=[\alpha']=[\alpha]$ in $\Out(G)$, $\alpha''|_S=\Id_S$, 
and hence $\4\kappa_G([\alpha])=\4\kappa_G([\alpha''])=1$. 

\smallskip

\noindent\textbf{(b) } If $\4\kappa_G$ is not injective, then by (a), there 
is $\alpha\in\Aut(G)\sminus\Inn(G)$ such that $|S|\bmid|C_G(\alpha)|$. 
Since $|\Out(G)|=2$, $|\alpha|=2m$ for some $m\ge1$. Thus $|\alpha^m|=2$, 
and $|S|\bmid|C_G(\alpha^m)|$.

\smallskip

\noindent\textbf{(c) } If $Q\nsg S$ and $\Out_{\Aut(G)}(Q)>\Out_G(Q)$, then 
there is $\beta\in\Aut(G)\sminus\Inn(G)$ such that $\beta(Q)=Q$ and 
$\beta|_Q\notin\Aut_G(Q)$. Since $S\in\sylp{N_G(Q)}$, we can arrange that 
$\beta(S)=S$ by replacing $\beta$ by $c_x\circ\beta$ for some appropriate 
element $x\in N_G(Q)$. We still have $\beta|_Q\notin\Aut_G(Q)$, so 
$\beta|_S\notin\Aut_G(S)$, and $\4\kappa_G([\beta])\ne1$. Thus $\4\kappa_G$ 
is nontrivial, and is injective if $|\Out(G)|=2$. 
\end{proof}


A finite group $H$ will be called \emph{strictly $p$-constrained} if 
$C_H(O_p(H))\le O_p(H)$; equivalently, if $F^*(H)=O_p(H)$. 

\begin{Lem} \label{l:HtoF}
Fix a prime $p$. Let $G$ be a finite group, fix $S\in\sylp{G}$, and set 
$\calf=\calf_S(G)$. Let $H<G$ be a subgroup which contains $S$. 
\begin{enuma} 

\item If $H$ is strictly $p$-constrained, then $\kappa_H$ and $\mu_H$ are 
isomorphisms.

\item Assume $H=N_G(Q)$, where either $Q$ is characteristic in $S$, or 
$|Q|=p$, $Q\le Z(S)$, and $\Aut(\calf)$ sends each $G$-conjugacy class of 
elements of order $p$ in $Z(S)$ to itself. Set $\calf_H=\calf_S(H)$ for 
short, and set $\Aut^0(\calf_H)=\Aut(\calf_H)\cap\Aut(\calf)$ and 
$\Out^0(\calf_H)=\Aut^0(\calf_H)/\Aut_H(S)$. Then the inclusion of 
$\Aut^0(\calf_H)$ in $\Aut(\calf)$ induces a surjection of 
$\Out^0(\calf_H)$ onto $\Out(\calf)$, and hence 
$|\Out(\calf)|\le|\Out^0(\calf_H)|\le|\Out(\calf_H)|$. 

If in addition, $H$ is strictly $p$-constrained or $\4\kappa_H$ is onto, 
and we set $\Out^0(H)=\4\kappa_H^{-1}(\Out^0(\calf_H))$, then 
$|\Out(\calf)|\le|\Out^0(H)|\le|\Out(H)|$. 

\end{enuma}
\end{Lem}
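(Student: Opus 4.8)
The plan is to prove part (a) first, then build part (b) on top of it. For part (a): assume $H$ is strictly $p$-constrained, so $F^*(H)=O_p(H)=:Q$, and note $Q$ is then a normal $p$-centric subgroup of $H$, with $Q\in\Ob(\call_S^c(H))$ the unique minimal such object after we pass to the full subcategory on centric subgroups. The key structural fact is that when $O_p(H)$ is centric, the orbit category / linking system of $H$ is "rigid" — more precisely, $\call_S^c(H)$ has a normal $p$-subgroup which is $\calf$-centric, and by the theory of \cite{BLO1} (or \cite[\S III.5, III.6]{AKO}) the obstruction groups $\varprojlim^i$ over the orbit category vanish in the relevant degrees, so that $\Out(\call_S^c(H))\cong\Out(\calf_S(H))$ (this is $\mu_H$ an isomorphism), and moreover every automorphism of $\calf$ is realized by an automorphism of $H$ fixing $Q$, giving $\kappa_H$ surjective; injectivity of $\kappa_H$ and $\mu_H$ then follows because an automorphism of $\call$ or of $\calf$ trivial on $S$ (hence on $Q$ and on $H/C_H(Q)Q = H/Q$) must be inner. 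Concretely I would cite \cite[Proposition 1.6(b)]{BMO2} or the constrained-case discussion there, which is exactly the statement that a constrained fusion system is tamely realized by its (unique, up to the relevant equivalence) model $H$, with $\kappa_H$ an isomorphism.

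For part (b), first suppose $Q$ is characteristic in $S$. Given $\alpha\in\Aut(S,\calf)$, since $Q=\alpha(Q)$ the automorphism $\alpha$ permutes the $\calf$-morphisms normalizing $Q$, i.e. $\alpha$ restricts to a fusion-preserving automorphism of $S$ for the fusion system $\calf_H=\calf_S(N_G(Q))$ — this uses that $N_\calf(Q)=\calf_S(N_G(Q))$ when $Q\nsg\calf$, together with $Q\,\mathrm{char}\,S\Rightarrow Q\nsg\calf$. Hence $\alpha\in\Aut^0(\calf_H)$, so the inclusion $\Aut^0(\calf_H)\hookrightarrow\Aut(S,\calf)$ is in fact onto on the level of $\Aut(S)$, giving the surjection $\Out^0(\calf_H)\twoheadrightarrow\Out(\calf)$ once we quotient by the (possibly smaller) image of $\Aut_H(S)$ inside $\Aut_G(S)$. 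The chain $|\Out(\calf)|\le|\Out^0(\calf_H)|\le|\Out(\calf_H)|$ is then formal. In the alternative hypothesis ($|Q|=p$, $Q\le Z(S)$, $\Aut(\calf)$ stabilizing each $G$-class of order-$p$ elements of $Z(S)$), the point is that although $Q$ need not be $\calf$-characteristic, any $\alpha\in\Aut(S,\calf)$ sends $Q$ to a subgroup $\alpha(Q)\le Z(S)$ of order $p$ that is $\calf$-conjugate to $Q$; composing with an element of $\Aut_G(S)$ (which acts transitively on each $G$-class within $Z(S)$, or at least we can correct inside $N_G(S)$) we may assume $\alpha(Q)=Q$, and then proceed exactly as before. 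The hypothesis on $G$-classes is precisely what lets us make this correction without leaving $\Aut(S,\calf)$.

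For the last paragraph of (b): set $\Out^0(H)=\4\kappa_H^{-1}(\Out^0(\calf_H))$. We want $|\Out(\calf)|\le|\Out^0(H)|$. If $H$ is strictly $p$-constrained, then by part (a) $\4\kappa_H=\mu_H\circ\kappa_H\colon\Out(H)\to\Out(\calf_H)$ is an isomorphism, so $\Out^0(H)\cong\Out^0(\calf_H)$ and we are done by the previous paragraph; if instead $\4\kappa_H$ is merely onto, then $\Out^0(H)\twoheadrightarrow\Out^0(\calf_H)\twoheadrightarrow\Out(\calf)$, so $|\Out(\calf)|\le|\Out^0(H)|$, and $|\Out^0(H)|\le|\Out(H)|$ is trivial. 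The main obstacle, and the only genuinely nontrivial input, is part (a): one must correctly invoke the obstruction-theory machinery for constrained fusion systems to get that $\kappa_H$ and $\mu_H$ are isomorphisms — everything in (b) is then bookkeeping about restricting fusion-preserving automorphisms to the normalizer fusion system $N_\calf(Q)=\calf_S(N_G(Q))$ and tracking indices. I would make sure to state explicitly, as a preliminary, that for $Q\nsg\calf$ one has $N_\calf(Q)=\calf_S(N_G(Q))$ (a standard fact, e.g. \cite[I.5.4]{AKO}), since that identification is what makes the restriction map well defined in both cases covered by the hypothesis.
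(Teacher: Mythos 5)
Your proposal is correct and follows essentially the same route as the paper: part (a) is disposed of by citing the constrained-model result \cite[Proposition 1.6]{BMO2}, and part (b) rests on the factorization $\Aut(\calf)=\Aut_G(S)\cdot N_{\Aut(\calf)}(Q)$ (trivial when $Q$ is characteristic in $S$, and via the Burnside--Frattini correction inside $N_G(S)$ in the $Q\le Z(S)$ case) together with the observation that a fusion-preserving automorphism normalizing $Q$ preserves $N_\calf(Q)=\calf_S(N_G(Q))$, hence lies in $\Aut^0(\calf_H)$. The only discrepancies are cosmetic (the paper cites Proposition 1.6(a) rather than (b), and your motivational sketch of injectivity in part (a) is looser than what the citation actually delivers), so nothing further is needed.
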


\begin{proof} \textbf{(a) } See, e.g., \cite[Proposition 1.6(a)]{BMO2}.

\smallskip

\noindent\textbf{(b) } We first claim that 
	\beqq \Aut(\calf) = \Aut_G(S)\cdot N_{\Aut(\calf)}(Q)
	\le \Aut_G(S)\cdot\Aut^0(\calf_H) \label{e:HtoF} \eeqq
as subgroups of $\Aut(S)$. If $Q$ is characteristic in $S$, then the 
equality is clear. If $|Q|=p$, $Q\le Z(S)$, and each $\alpha\in\Aut(\calf)$ 
sends $Q$ to a subgroup which is $G$-conjugate to $Q$, then the equality 
follows from the Frattini argument (and since each subgroup of $Z(S)$ which 
is $G$-conjugate to $Q$ is $N_G(S)$-conjugate to $Q$). If 
$\alpha\in\Aut(S)$ normalizes $Q$ and preserves fusion in $G$, then it 
preserves fusion in $H=N_G(Q)$. Thus 
$N_{\Aut(\calf)}(Q)\le\Aut^0(\calf_H)$, proving the second relation in 
\eqref{e:HtoF}. 

Now, $\Aut_H(S)\le\Aut_G(S)\cap\Aut^0(\calf_H)$. Together with 
\eqref{e:HtoF}, this implies that the natural homomorphism
	\[ \Out^0(\calf_H) = \Aut^0(\calf_H)\big/\Aut_H(S) \Right5{}
	\Aut(\calf)\big/\Aut_G(S) = \Out(\calf) \]
is well defined and surjective. The last statement now follows from (a).
\end{proof}

The next lemma will be useful when determining $\Out(H)$ for the subgroups 
$H$ which appear when applying Lemma \ref{l:HtoF}(b).

\begin{Lem} \label{Out(Q:H)}
Let $H$ be a finite group, and let $Q\nsg H$ be a characteristic subgroup 
such that $C_H(Q)\le Q$. Set $H^*=\Out_H(Q)\cong H/Q$. 
\begin{enuma} 

\item There is an exact sequence 
	\[ 1 \Right2{} H^1(H^*;Z(Q)) \Right4{} \Out(H) \Right4{R} 
	N_{\Out(Q)}(H^*)\big/H^*, \]
where $R$ sends the class of $\alpha\in\Aut(H)$ to the class of 
$\alpha|_Q$. 

\item Assume $R\le Z(Q)$ and $R\nsg H$. Let $\alpha\in\Aut(H)$ be such that 
$\alpha|_R=\Id_R$ and $[\alpha,H]\le R$. Then there is 
$\psi\in\Hom_{H}(Q/R,R)$ such that $\alpha(g)=g\psi(gR)$ for each $g\in 
Q$, and hence $\alpha|_Q=\Id_Q$ if $\Hom_{H}(Q/R,R)=1$. If 
$\alpha|_Q=\Id_Q$, $[\alpha,H]\le R$, and $H^1(H^*;R)=0$, then 
$\alpha\in\Aut_R(H)$.

\item Fix a prime $p$, assume $Q$ is an extraspecial or elementary abelian 
$p$-group, and set $\4Q=Q/\Fr(Q)$. Set $H_0^*=O^{p'}(H^*)$, and 
$X=N_{\Out(Q)}(H^*)/H^*$. 
\begin{enumerate}[\rm(c.i) ] 
\item If $\4Q$ is absolutely irreducible as an $\F_pH^*$-module, then there 
is $Y\nsg X$ such that $Y\cong(\Z/p)^\times\big/Z(H^*)$ and $X/Y$ is 
isomorphic to a subgroup of $\Out(H^*)$. 
\item If $\4Q$ is absolutely irreducible as an $\F_pH_0^*$-module, then 
there is $Y\nsg X$ such that $Y\cong(\Z/p)^\times\big/Z(H^*)$ and 
	\[ |X/Y| \le |\Out(H_0^*)|\big/|\Out_{H^*}(H_0^*)|. \]


\end{enumerate}
Here, $Z(H^*)$ acts on $\4Q$ via multiplication by scalars, and we regard 
it as a subgroup of $(\Z/p)^\times$ in that way.

\end{enuma}
\end{Lem}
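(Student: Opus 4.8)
The plan is to treat the three parts in order, each building on the previous one. For part (a), I would start from the standard observation that since $Q$ is characteristic in $H$, every $\alpha\in\Aut(H)$ restricts to an automorphism $\alpha|_Q\in\Aut(Q)$, giving a homomorphism $\Aut(H)\to\Aut(Q)$; since $\alpha$ also permutes the cosets of $Q$ compatibly with its action on $H/Q\cong H^*$, the image of $\alpha|_Q$ in $\Out(Q)$ must normalize $H^*=\Out_H(Q)$. This produces the map $R$ into $N_{\Out(Q)}(H^*)/H^*$ (after checking that $\Inn(H)$ maps into $\Out_H(Q)=H^*$, which is immediate). The kernel consists of classes of automorphisms that are the identity on $Q$ modulo $\Inn(Q)$; using $C_H(Q)\le Q$ so that $\Aut_H(Q)\cong H$, a standard cohomological identification (as in the analysis of automorphisms of group extensions with abelian-by-something structure, cf. the exact sequence for $\Aut$ of an extension) shows this kernel is $H^1(H^*;Z(Q))$. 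This is bookkeeping with the five-term exact sequence or a direct cocycle argument; no real obstacle here.

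For part (b), I would argue directly with cocycles. Given $\alpha\in\Aut(H)$ with $\alpha|_R=\Id_R$ and $[\alpha,H]\le R$, define $\psi\colon Q\to R$ by $\psi(g)=g^{-1}\alpha(g)$; the hypothesis $[\alpha,H]\le R$ gives $\alpha(g)\in gR$, so $\psi$ lands in $R$, and since $R\le Z(Q)$ one checks $\psi$ is a homomorphism $Q\to R$ trivial on $R$ (because $\alpha|_R=\Id$), hence factors through $Q/R$; the compatibility $[\alpha,H]\le R$ forces $\psi$ to be $H$-equivariant, so $\psi\in\Hom_H(Q/R,R)$. If $\Hom_H(Q/R,R)=1$ then $\psi=1$ and $\alpha|_Q=\Id_Q$. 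For the last sentence: if $\alpha|_Q=\Id_Q$ and $[\alpha,H]\le R$, then $h\mapsto h^{-1}\alpha(h)$ defines a $1$-cocycle on $H$ with values in $R$ (using that $\alpha$ is trivial on $Q$ and $R$ is a normal abelian subgroup), which factors through a $1$-cocycle of $H^*$ on $R$; vanishing of $H^1(H^*;R)$ makes it a coboundary, i.e.\ $\alpha=c_r$ for some $r\in R$, so $\alpha\in\Aut_R(H)$.

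For part (c), set $\4Q=Q/\Fr(Q)$, an $\F_pH^*$-module, and note $X=N_{\Out(Q)}(H^*)/H^*$ acts on $\4Q$. The point is that an element of $N_{\Out(Q)}(H^*)$ normalizing $H^*$ induces a linear automorphism of $\4Q$ that normalizes the image of $\F_pH^*$ in $\End(\4Q)$; when $\4Q$ is absolutely irreducible this image is all of $\End_{\F_p}(\4Q)$ (resp. all of $\End_{\F_p}$ after extending scalars), and by the Skolem--Noether theorem any such normalizing automorphism is inner on $\End(\4Q)$ up to the subgroup of scalars. Concretely: the subgroup $Y\le X$ acting on $\4Q$ by scalars is normal, is isomorphic to $(\Z/p)^\times$ modulo the scalars already coming from $Z(H^*)$, and $X/Y$ embeds (via its action, using that outer-automorphism classes that preserve the module structure up to scalar come from algebra automorphisms of $H^*$) into $\Out(H^*)$ — giving (c.i). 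For (c.ii), replace $H^*$ by $H_0^*=O^{p'}(H^*)$: absolute irreducibility over $\F_pH_0^*$ gives the same scalar-normalization conclusion with $Y$ as before, and the bound $|X/Y|\le|\Out(H_0^*)|/|\Out_{H^*}(H_0^*)|$ comes from the fact that elements of $X$ act as outer automorphisms of $H_0^*$ but those already realized by $H^*$ itself must be quotiented out (they are absorbed into $H^*$, which we have already killed in forming $X$). The main obstacle is the careful Skolem--Noether / double-centralizer argument in (c): one must be precise about whether "absolutely irreducible" is over $\F_p$ or over its algebraic closure, track the scalars $(\Z/p)^\times$ versus $Z(H^*)$ correctly, and verify that the induced map $X/Y\to\Out(H^*)$ (resp. the bound involving $\Out(H_0^*)$) is well defined — i.e.\ that the algebra automorphism of the matrix ring pulls back to a genuine automorphism of the abstract group $H^*$ (resp. $H_0^*$), not just of its image in $\End(\4Q)$. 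Faithfulness of the module (which follows from $C_H(Q)\le Q$ together with $Q$ extraspecial or elementary abelian) is what makes this pullback possible, and checking it cleanly is the delicate step.
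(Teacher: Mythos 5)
Your proposal is correct and follows essentially the same route as the paper: part (a) is simply cited there (as a special case of \cite[Lemma 1.2]{OV2}, whose standard proof is the cocycle argument you sketch), part (b) is the identical direct computation with $\psi(g)=g^{-1}\alpha(g)$ and a $1$-cocycle on $H^*$, and part (c) rests on the same fact that absolute irreducibility forces $C_{\Out(Q)}(H^*)$ (resp.\ $C_{\Out(Q)}(H_0^*)$) to consist of scalars $(\Z/p)^\times$ --- the paper cites \cite[25.8]{A-FGT} where you invoke Skolem--Noether, which is the same double-centralizer content. The one point worth making explicit, which you correctly flag as delicate, is that $C_{\Out(Q)}(H_0^*)\cap H^*=Z(H^*)$ requires faithfulness of the $H^*$-action on $\4Q$ (guaranteed here by $C_H(Q)\le Q$ and $\Fr(Q)\le Z(Q)$), and the (c.ii) bound is then just the index computation $|X/Y|=|N_{\Out(Q)}(H^*)/C_{\Out(Q)}(H_0^*)H^*|\le|\Aut(H_0^*)|/|\Aut_{H^*}(H_0^*)|$.
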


\begin{proof} \textbf{(a) } The exact sequence is a special case of 
\cite[Lemma 1.2]{OV2}. 

\noindent\textbf{(b) } By assumption, there is a function $\psi\:Q/R\too R$ 
such that $\alpha(g)=g\psi(gR)$ for each $g\in Q$, and $\psi$ is a 
homomorphism since $R\le Z(Q)$. For each $h\in H$, $\alpha(h)=rh$ for some 
$r\in R$. So for $g\in Q$, since $[r,Q]=1$, we get 
$\psi(\9hgR)=(\9hg)^{-1}\alpha(\9hg)= (\9hg)^{-1}\9{rh}(\alpha(g)) 
=\9h(g^{-1}\alpha(g)) =\9h\psi(gR)$. Thus $\psi\in\Hom_{H}(Q/R,R)$. 

If $\alpha|_Q=\Id$ and $[\alpha,H]\le R$, then there is $\chi\:H^*\too R$ 
such that $\alpha(g)=\chi(gQ)g$ for each $g\in H$. Then 
$\chi(ghQ)=\chi(gQ)\cdot\9g\chi(hQ)$ for all $g,h\in H$, so $\chi$ is a 
1-cocycle. If $H^1(H^*;R)=0$, then there is $r\in R$ such that 
$\chi(gQ)=r(\9gr)^{-1}$ for each $g\in H$, and $\alpha$ is conjugation by 
$r$.

\noindent\textbf{(c) } If $\4Q$ is absolutely irreducible as an 
$\F_pH^*$-module, then $C_{\Out(Q)}(H^*)\cong(\Z/p)^\times$ consists of 
multiplication by scalars (see \cite[25.8]{A-FGT}), so its image $Y$ in 
$X=N_{\Out(Q)}(H^*)/H^*$ is isomorphic to $(\Z/p)^\times\big/Z(H^*)$. Also, 
$X/Y\cong\Out_{\Out(Q)}(H^*)$: a subgroup of $\Out(H^*)$. This proves 
(c.i).

If $\4Q$ is absolutely irreducible as an $\F_pH_0^*$-module, let $Y$ 
be the image of $C_{\Out(Q)}(H_0^*)$ in $X=N_{\Out(Q)}(H^*)/H^*$. 
Then $Y\cong(\Z/p)^\times\big/Z(H^*)$ (by \cite[25.8]{A-FGT} again), and 
	\begin{align*} 
	|X/Y| &= |N_{\Out(Q)}(H^*)\big/
	C_{\Out(Q)}(H_0^*)\cdot H^*| 
	\le |N_{\Out(Q)}(H_0^*)|\big/
	|C_{\Out(Q)}(H_0^*)\cdot H^*| \\
	&= |\Aut_{\Out(Q)}(H_0^*)|\big/|\Aut_{H^*}(H_0^*)| \\
	&\le |\Aut(H_0^*)|\big/|\Aut_{H^*}(H_0^*)| 
	= |\Out(H_0^*)|\big/|\Out_{H^*}(H_0^*)|. 
	\end{align*}
This proves (c.ii).
\end{proof}

The next lemma provides some simple tools for showing that certain 
representations are absolutely irreducible.

\begin{Lem} \label{V_abs_irr}
Fix a prime $p$, a finite group $G$, and an irreducible $\F_pG$-module $V$. 
\begin{enuma} 
\item The module $V$ is absolutely irreducible if and only if 
$\End_{\F_pG}(V)\cong\F_p$.

\item If $\dim_{\F_p}(C_V(H))=1$ for some $H\le G$, then $V$ is absolutely 
irreducible.

\item Assume $H\le G$ is a subgroup such that $V|_H$ splits as a direct sum 
of absolutely irreducible pairwise nonisomorphic $\F_pH$-submodules. Then 
$V$ is absolutely irreducible.

\end{enuma}
\end{Lem}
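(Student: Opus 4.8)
The plan is to base all three parts on part (a), so I would prove (a) first and then deduce (b) and (c) from it. For (a): by Schur's lemma the endomorphism ring $D=\End_{\F_pG}(V)$ of the irreducible module $V$ is a division ring, and it is finite since it embeds in $\End_{\F_p}(V)$, hence a finite field $\F_{p^k}$ by Wedderburn's theorem. Regarding $V$ as a $D$-vector space, the Jacobson density theorem identifies the image of $\F_pG$ in $\End_{\F_p}(V)$ with $\End_D(V)$; extending scalars to $\fpbar$ and using $\fpbar\otimes_{\F_p}D\cong\fpbar^{\,k}$, one sees that $\fpbar\otimes_{\F_p}V$ splits as a direct sum of $k$ pairwise nonisomorphic simple $\fpbar G$-modules. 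Thus $V$ is absolutely irreducible exactly when $k=1$, i.e.\ when $D=\F_p$. (Alternatively this is a standard textbook fact and could simply be cited.)

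For (b): I would take $\varphi\in\End_{\F_pG}(V)$ and note that, since $\varphi$ commutes with the $H$-action, it carries the fixed subspace $C_V(H)$ into itself. As $\dim_{\F_p}C_V(H)=1$, $\varphi$ acts on $C_V(H)$ by multiplication by some scalar $\lambda\in\F_p$, so $\varphi-\lambda\cdot\Id_V\in\End_{\F_pG}(V)$ vanishes on the nonzero subspace $C_V(H)$ and therefore vanishes on all of $V$ by irreducibility. Hence $\End_{\F_pG}(V)=\F_p$, and (a) applies. (This argument uses nothing about semisimplicity of $V|_H$.)

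For (c): write $V|_H=W_1\oplus\cdots\oplus W_n$ as in the hypothesis. Schur's lemma gives $\Hom_{\F_pH}(W_i,W_j)=0$ for $i\ne j$, while $\End_{\F_pH}(W_i)=\F_p$ by part (a) applied to each (absolutely irreducible) $W_i$; hence $\End_{\F_pH}(V)\cong\F_p^{\,n}$ as an $\F_p$-algebra. Now $\End_{\F_pG}(V)$ is a unital $\F_p$-subalgebra of $\End_{\F_pH}(V)\cong\F_p^{\,n}$, and it is a division ring, hence a field, by Schur's lemma. But composing the inclusion with any coordinate projection $\F_p^{\,n}\to\F_p$ yields a ring homomorphism out of a field, which is necessarily injective, so that field embeds into $\F_p$ and equals $\F_p$. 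Thus $\End_{\F_pG}(V)=\F_p$, and (a) finishes the proof.

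There is no real obstacle here: the lemma packages three routine Schur's-lemma-type facts. The only place that deserves a little care is part (a), where one must correctly link the condition $\End_{\F_pG}(V)=\F_p$ to irreducibility after base change to $\fpbar$; the cleanest route is via finite division rings (Wedderburn) together with the density theorem as above, although one could equally well quote a standard reference of the type already used elsewhere in the paper.
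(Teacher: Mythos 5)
Your proposal is correct and follows essentially the same route as the paper: both reduce (b) and (c) to the criterion $\End_{\F_pG}(V)\cong\F_p$ of part (a), using in (b) that the fixed subspace $C_V(H)$ is invariant under $G$-endomorphisms (the paper phrases this as $[K:\F_p]$ dividing $\dim_{\F_p}C_V(H)$ for $K=\End_{\F_pG}(V)$, which is the same observation), and in (c) that $\End_{\F_pG}(V)$ is a field sitting inside $\End_{\F_pH}(V)\cong\F_p^{\,n}$. The only cosmetic difference is that the paper simply cites \cite[25.8]{A-FGT} for (a) where you sketch a proof, which you also note as an option.
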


\begin{proof}
\textbf{(a) } See, e.g., \cite[25.8]{A-FGT}.

\smallskip

\noindent\textbf{(b) } Set $\End_{\F_pG}(V)=K$: a finite extension of 
$\F_p$. Then $V$ can be considered as a $KG$-module, so $[K:\F_p]$ divides 
$\dim_{\F_p}(C_V(H))$ for each $H\le G$. Since there is $H$ with 
$\dim_{\F_p}(C_V(H))=1$, this implies $K=\F_p$, and so $V$ is absolutely 
irreducible by (a).

\smallskip

\noindent\textbf{(c) } The hypothesis implies that the ring 
$\End_{\F_pH}(V)$ is isomorphic to a direct product of copies of $\F_p$, 
one for each irreducible summand of $V|_H$. Since $\End_{\F_pG}(V)$ is a 
subring of $\End_{\F_pH}(V)$, and is a field since $V$ is irreducible, it 
must be isomorphic to $\F_p$. So $V$ is absolutely irreducible by (a). 
\end{proof}

\begin{Lem} \label{l:H1}
Let $G$ be a finite group, and let $V$ be a finite $\F_pG$-module. 
\begin{enuma} 
\item If $C_V(O_{p'}(G))=0$, then $H^1(G;V)=0$.
\item If $|V|=p$, and $G_0=C_G(V)$, then 
$H^1(G;V)\cong\Hom_{G/G_0}\bigl(G_0/[G_0,G_0],V\bigr)$. 
\end{enuma}
\end{Lem}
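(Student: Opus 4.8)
The plan for both parts is to pass to the quotient by a normal prime-to-$p$ subgroup, using the five-term inflation--restriction exact sequence together with the standard fact that $H^i(K;W)=0$ for $i>0$ whenever $\gcd(|K|,p)=1$ and $W$ is a finite abelian $p$-group.

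For (a), set $N=O_{p'}(G)\nsg G$, and apply inflation--restriction to the extension $1\to N\to G\to G/N\to1$:
\[ 0\to H^1(G/N;V^N)\to H^1(G;V)\to H^1(N;V). \]
Here $H^1(N;V)=0$ since $\gcd(|N|,p)=1$ and $V$ is a $p$-group, and $V^N=C_V(N)=0$ by hypothesis; hence $H^1(G;V)=0$.

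For (b), since $|V|=p$ the group $G$ acts on $V\cong\Z/p$ through a character $G\to\Aut(V)\cong(\Z/p)^\times$ with kernel $G_0=C_G(V)$, so $G/G_0$ is cyclic of order dividing $p-1$, in particular of order prime to $p$. Applying inflation--restriction to $1\to G_0\to G\to G/G_0\to1$, and noting that $V^{G_0}=V$ because $G_0$ acts trivially on $V$, we obtain
\[ 0\to H^1(G/G_0;V)\to H^1(G;V)\to H^1(G_0;V)^{G/G_0}\to H^2(G/G_0;V). \]
The two outer terms vanish because $|G/G_0|$ is prime to $p$, so $H^1(G;V)\cong H^1(G_0;V)^{G/G_0}$. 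As $G_0$ acts trivially on the abelian group $V$, we have $H^1(G_0;V)=\Hom(G_0,V)=\Hom(G_0/[G_0,G_0],V)$, and taking $G/G_0$-fixed points (with $G$ acting on $G_0$ by conjugation and on $V$ through the character) singles out exactly the subgroup of $G/G_0$-equivariant homomorphisms, namely $\Hom_{G/G_0}(G_0/[G_0,G_0],V)$.

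The only point that needs care --- and it is the closest thing to an obstacle here --- is the explicit identification of the conjugation action of $G/G_0$ on $H^1(G_0;V)=\Hom(G_0/[G_0,G_0],V)$: from the standard formula one checks that a homomorphism $\varphi$ is fixed by $gG_0$ precisely when $\varphi(\9gx)=g\cdot\varphi(x)$ for all $x\in G_0$, i.e. precisely when $\varphi$ is $G/G_0$-equivariant, using that inner automorphisms of $G_0$ act trivially both on $G_0/[G_0,G_0]$ and on $V$. Beyond keeping these cohomological conventions consistent, neither part presents a real difficulty.
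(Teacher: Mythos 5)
Your proof is correct. Part (b) follows exactly the paper's route: reduce to the trivial-action case via the prime-to-$p$ quotient $G/G_0\le(\Z/p)^\times$ and identify $H^1(G;V)$ with the $G/G_0$-fixed points of $\Hom(G_0/[G_0,G_0],V)$; your explicit check of the conjugation action is the only detail the paper leaves implicit. For part (a) you take the inflation--restriction sequence for $1\to N\to G\to G/N\to1$ and kill both outer terms ($H^1(N;V)=0$ by coprimality, $V^N=0$ by hypothesis), whereas the paper's primary argument is more elementary and avoids cohomological machinery: given a module $W\ge V$ with $[G,W]\le V$, coprime action of $H=O_{p'}(G)$ gives $W=C_W(H)\oplus[H,W]$ with $[H,W]=V$ (using $C_V(H)=0$), and $C_W(H)$ is a $G$-invariant complement since $H\nsg G$, so $\Ext^1_{\F_pG}(\F_p,V)=0$. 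The paper explicitly notes your spectral-sequence argument as an alternative; its advantage is that it gives vanishing of $H^i(G;V)$ in all degrees at no extra cost, while the paper's splitting argument stays entirely within elementary group theory.
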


\begin{proof} \textbf{(a) } Set $H=O_{p'}(G)$ for short. Assume $W\ge V$ is an 
$\F_pG$-module such that $[G,W]\le V$. Then $[H,W]=[H,V]=V$ since 
$C_V(H)=0$, and so $W=C_W(H)\oplus[H,W]=C_W(H)\oplus V$. 
Thus $H^1(G;V)\cong\Ext^1_{\F_pG}(\F_p,V)=0$.

Alternatively, with the help of the obvious spectral sequence, one can show 
that $H^i(G;V)=0$ for all $i\ge0$. 

\smallskip

\noindent\textbf{(b) } This is clear when $G$ acts trivially on $V$. It 
follows in the general case since for $G_0\nsg G$ of index prime to $p$ and 
any $\F_pG$-module $V$, $H^1(G;V)$ is the group of elements fixed by the 
action of $G/G_0$ on $H^1(G_0;V)$. 
\end{proof}

We end with a much more specialized lemma, which is needed when working 
with the Thompson group $F_3$. 

\begin{Lem} \label{l:A9-rk8}
Set $H=A_9$. Assume $V$ is an $8$-dimensional $\F_2H$-module such 
that for each $3$-cycle $g\in H$, $C_V(g)=0$. Then $V$ is absolutely 
irreducible, $\dim(C_V(T))=1$ for $T\in\syl2{H}$, and $N_{\Aut(V)}(H)/H=1$.
\end{Lem}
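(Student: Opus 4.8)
The plan is to pin $V$ down up to isomorphism and then read off the three assertions. Since a $3$-cycle $g$ has order prime to $2$, the restriction $V|_{\gen g}$ is semisimple, so any trivial composition factor of the $\F_2A_9$-module $V$ would contribute a trivial $\gen g$-summand to $V|_{\gen g}$, i.e. a nonzero subspace of $C_V(g)$; hence the hypothesis forces $V$ to have no trivial composition factor. As the nontrivial irreducible $\F_2A_9$-modules all have dimension at least $8$ (which is well known), $V$ is irreducible of dimension $8$, and the same counting over $\4{\F}_2$ shows $V$ is absolutely irreducible. Now there are exactly three $8$-dimensional absolutely irreducible $\F_2A_9$-modules: the $8$-dimensional heart $W$ of the natural permutation module (which is self-dual and invariant under $\Out(A_9)$), and two further modules $V_1,V_2$ which are dual to one another and are interchanged by $\Out(A_9)$. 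A one-line computation in the permutation module gives $\dim C_W(g)=6$ for a $3$-cycle $g$, so $V\not\cong W$; and since the hypotheses and all three conclusions are $\Aut(A_9)$-invariant while an odd permutation conjugates $V_1$ to $V_2$, we may assume $V\cong V_1$.

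For the fixed-point count I would take $T\in\syl2{A_9}$ inside a point stabilizer $A_8\cong \GL_4(2)$, with $T$ the group of upper unitriangular matrices. Every $3$-cycle of $A_8$ still acts fixed-point-freely on $V$, and among the irreducible $\F_2A_8$-modules of dimension at most $6$ only the natural module $N$ of $\GL_4(2)$ and its dual $N^*$ have this property (a $3$-cycle has a $4$-dimensional fixed space on the $6$-dimensional orthogonal module, and fixes the trivial module). Hence the composition factors of $V|_{A_8}$ all lie in $\{N,N^*\}$; using the structure of the spin module one checks that $V|_{A_8}$ is in fact the nonsplit uniserial module with these two factors, not $N\oplus N^*$. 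Since $\dim C_N(T)=\dim C_{N^*}(T)=1$, the long exact $T$-cohomology sequence of the corresponding short exact sequence then gives $\dim C_V(T)=1$, once one checks that its connecting homomorphism is nonzero. (An appealing alternative is to restrict instead to a subgroup $L_2(8):3$ acting $3$-transitively on the $9$ points: if $V|_{L_2(8)}$ is the Steinberg module of $\SL_2(8)$, then its fixed space under a Sylow $2$-subgroup of $L_2(8)$ — which lies in some $T\in\syl2{A_9}$ — is $1$-dimensional, whence $\dim C_V(T)\le1$ and so $=1$.)

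With $\dim C_V(T)=1$ in hand, absolute irreducibility is re-proved by Lemma \ref{V_abs_irr}(b). For the last assertion, absolute irreducibility gives $C_{\Aut(V)}(H)=\F_2^\times=\{1\}$, so conjugation yields an injection $N_{\Aut(V)}(H)/H\hookrightarrow\Out(A_9)\cong C_2$. A preimage of the nontrivial element would realize the outer automorphism of $A_9$ on $V$, forcing $V\cong V_1$ to be isomorphic to its $\Out(A_9)$-twist $V_2$; but $V_1\not\cong V_2$ since $V_1$ is not self-dual (its dual being $V_2$). Hence $N_{\Aut(V)}(H)/H=1$, and all three conclusions follow.

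The genuinely computational point — and the main obstacle — is the identification of $V|_{A_8}$, equivalently the precise submodule structure of the $8$-dimensional spin module of $A_9$ and the fact that the relevant connecting map does not vanish: everything else is formal once one knows that the nontrivial irreducible $\F_2A_9$-modules have dimension $\ge8$ and that there are exactly three of dimension $8$ with the stated duality and outer-automorphism behaviour. In practice I would either invoke the known $2$-modular structure of $A_9$ or work with an explicit matrix model for the action of $A_9$ on $V$ coming from its occurrence inside the Thompson group.
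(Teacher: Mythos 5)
Your overall strategy --- pin down $V$ up to isomorphism among the three $8$-dimensional irreducible $\F_2A_9$-modules and then read off the conclusions --- is genuinely different from the paper's, which never classifies $V$ at all. The paper restricts to $A=\gen{(1\,2\,3),(4\,5\,6),(7\,8\,9)}\cong E_{27}$ and $B\cong D_8$ permuting the three blocks and the points within them: the hypothesis forces every character of $A$ that is nontrivial on each generator to occur in $\4\F_2\otimes V$, there are exactly eight such characters, $B$ permutes them transitively, so each occurs once. This gives absolute irreducibility and $\dim C_V(B)=1$ (hence $\dim C_V(T)=1$) in a single stroke, with no input from the $2$-modular theory of $A_9$. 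Your route instead leans on unproved ``well-known'' facts (nontrivial irreducibles have dimension $\ge8$; there are exactly three of dimension $8$), and by your own admission leaves the key computation for $\dim C_V(T)=1$ --- the uniserial structure of $V|_{A_8}$ and the nonvanishing of the connecting map, or alternatively the identification of $V|_{L_2(8)}$ with the Steinberg module --- unverified. That step is therefore not yet a proof.

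More seriously, the final assertion rests on a false claim. The two non-permutation $8$-dimensional modules $V_1,V_2$ are \emph{not} dual to one another: they are the restrictions to $A_9<\Omega_8^+(2)$ of the two half-spin modules of $D_4(2)$, each of which carries an invariant quadratic form (equivalently, $-1$ lies in the Weyl group of $D_4$, so every irreducible is self-dual). Hence $V_1\cong V_1^*$ and $V_2\cong V_2^*$, and your argument that ``$V_1\not\cong V_2$ since $V_1$ is not self-dual'' collapses. Since the entire third conclusion reduces to showing that the outer automorphism of $A_9$ does not fix the isomorphism class of $V$, this is the crux, and it needs a different argument. The paper's is short and worth adopting: if the action extended to $S_9$, then $x=(1\,2)$ inverts $a_1=(1\,2\,3)$ and $C_V(a_1)=0$ forces $\dim C_V(x)=4$; the group $C_{S_9}(x)/\gen{x}\cong S_7$ would act faithfully on $C_V(x)$ (faithfully because its only proper nontrivial normal subgroup is $A_7$, which contains $3$-cycles, and these act without nonzero fixed points on $V$), giving $S_7\le\GL_4(2)\cong A_8$, which is impossible. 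With that substitution, and with the $\dim C_V(T)=1$ step actually carried out (the $L_2(8)$ route also requires you to rule out trivial composition factors of $V|_{L_2(8)}$, which does not follow directly from the hypothesis since the order-$3$ elements of $L_2(8)$ have cycle type $3^3$, not $3$), your argument could be completed; as written it has two genuine gaps.
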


\begin{proof} Consider the following elements in $A_9$:
	\begin{align*} 
	a_1&=(1\,2\,3), &  a_2&=(4\,5\,6), &  a_3&=(7\,8\,9), \\
	b_1&=(1\,2)(4\,5), &  b_2&=(1\,2)(7\,8), &  
	b_3&=(1\,2)(4\,7)(5\,8)(6\,9). 
	\end{align*}
Set $A=\gen{a_1,a_2,a_3}\cong E_{27}$ and $B=\gen{b_1,b_2,b_3}\cong D_8$. 
Set $\4V=\4\F_2\otimes_{\F_2}V$. As an $\4\F_2A$-module, $\4V$ splits as a 
sum of 1-dimensional submodules, each of which has character 
$A\too\4\F_2^\times$ for which none of the $a_i$ is in the kernel. There 
are eight such characters, they are permuted transitively by $B$, and so 
each occurs with multiplicity $1$ in the decomposition of $\4V$. Thus $\4V$ 
is $AB$-irreducible, and hence $H$-irreducible (and $V$ is absolutely 
irreducible). Also, $\dim_{\4\F_2}(C_{\4V}(B))=1$, so 
$\dim(C_V(B))=1$, and $\dim(C_V(T))=1$ since $C_V(T)\ne0$.

In particular, $C_{\Aut(V)}(H)\cong\F_2^\times=1$, and hence 
$N_{\Aut(V)}(H)/H$ embeds into $\Out(H)$. So if $N_{\Aut(V)}(H)/H\ne1$, 
then the action of $H$ extends to one of $\5H\cong S_9$. In that case, if 
we set $x=(1\,2)\in\5H$, then $C_V(x)$ has rank $4$ since $x$ inverts $a_1$ 
and $C_V(a_1)=0$. But the group $C_{\5H}(x)/x\cong S_7$ acts faithfully on 
$C_V(x)$, and this is impossible since $\GL_4(2)\cong A_8$ contains no 
$S_7$-subgroup. (This argument is due to Richard Lyons \cite{Lyons}.) 
\end{proof}

\bigskip

\section{Automorphisms of $2$-fusion systems of sporadic groups}
\label{s:kappa-2}

The main result in this section is that when $G$ is a sporadic simple group 
and $p=2$, $\Out(\calf)\cong\Out(G)$ in all cases except when $G\cong 
M_{11}$. The first proposition consists mostly of the cases where this 
was shown in earlier papers.

\begin{Prop} \label{kappa-2-small}
Let $G$ be a sporadic simple group whose Sylow $2$-subgroups have order at 
most $2^9$. Then the $2$-fusion system of $G$ is tame. More precisely, 
$\kappa_G$ and $\mu_G$ are isomorphisms except when $G\cong M_{11}$, in 
which case the $2$-fusion system of $G$ is tamely realized by $\PSL_3(3)$.
\end{Prop}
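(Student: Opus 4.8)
The plan is to go through the eleven sporadic simple groups $G$ with $|S|\le2^9$ — $J_1$ (Sylow $2$-order $2^3$), $M_{11}$ ($2^4$), $M_{12}$ ($2^6$), $M_{22}$, $M_{23}$, $J_2$, $J_3$, $\McL$ (all $2^7$), $\Ly$ ($2^8$), and $\HS$, $\ON$ ($2^9$) — verifying in each case that $\mu_G$ is injective (hence, with Proposition \ref{mu-onto}, an isomorphism) and that $\4\kappa_G=\mu_G\circ\kappa_G$ is an isomorphism, with $M_{11}$ the only exception. For several of these groups one or both conclusions are already available (\cite{BLO1}, \cite{OV2}, \cite{AOV1}, \cite{BMO2}), and I would simply recall those. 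Two cases stand apart. Since the Sylow $2$-subgroup of $J_1$ is elementary abelian of order $8$, $\calf=\calf_S(J_1)$ is constrained and realized by the strictly $2$-constrained subgroup $N_{J_1}(S)\cong 2^3{:}7{:}3$; a short computation (Lemmas \ref{Out(Q:H)} and \ref{l:H1}, together with $N_{\GL_3(2)}(7{:}3)=7{:}3$) gives $\Out(\calf)=\Out(\call)=1=\Out(J_1)$, so $\calf$ is tame and $\kappa_{J_1}$, $\mu_{J_1}$ are isomorphisms. For $M_{11}$ I would recall that at $p=2$ one has $\calf_S(M_{11})\cong\calf_S(\PSL_3(3))$ with $|\Out(\calf)|=|\Out(\call)|=2$ while $\Out(M_{11})=1$, so that $\4\kappa_{M_{11}}$ is injective but not surjective; $\PSL_3(3)$ is of Lie type, so it tamely realizes $\calf$ by \cite[Theorem C]{BMO2}.

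For each of the remaining nine groups the scheme is uniform. First, injectivity of $\4\kappa_G$: this is vacuous when $\Out(G)=1$, and when $|\Out(G)|=2$ — the seven cases $M_{12},M_{22},J_2,J_3,\McL,\HS,\ON$ — it follows from Lyons' theorem \cite[Theorem 1.1]{Lyons}, or equivalently from Lemma \ref{l:kappa-inj}(a) or (c): by Lemma \ref{l:kappa-inj}(a) it suffices that the centralizer in $G$ of every outer involution of $G.2$ have $2$-part strictly smaller than $|S|$, which one reads off from \cite{atlas}. Second, the upper bound $|\Out(\calf)|\le|\Out(G)|$: one picks a characteristic subgroup $Q$ of $S$ — usually $Z(S)$, $Z_2(S)$, or a characteristic elementary abelian or extraspecial subgroup — for which $H:=N_G(Q)$ is one of the standard $2$-local subgroups of $G$ and is strictly $2$-constrained, applies Lemma \ref{l:HtoF}(b) to get $|\Out(\calf)|\le|\Out^0(H)|\le|\Out(H)|$, and computes $|\Out(H)|$ from Lemma \ref{Out(Q:H)}: writing $H^*=\Out_H(Q)\cong H/Q$, one estimates $H^1(H^*;Z(Q))$ via Lemma \ref{l:H1} and bounds $N_{\Out(Q)}(H^*)/H^*$ using absolute irreducibility of $Q/\Fr(Q)$ (Lemmas \ref{V_abs_irr} and \ref{Out(Q:H)}(c)). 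Combined with injectivity this forces $\Out(\calf)=\Out(G)$, so $\4\kappa_G$, hence $\kappa_G$, is an isomorphism. Finally, for $\Ker(\mu_G)=1$ I would quote the computations of $\Out(\call_S^c(G))\cong\Out(BG\pcom)$ in \cite{BLO1} and \cite{OV2}, or invoke the $\Ker(\mu_G)$ techniques of Section \ref{s:Ker(mu)}.

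The main obstacle is the upper bound on $|\Out(\calf)|$: it requires, group by group, an explicit description of $S$ and of a characteristic subgroup $Q$ with $N_G(Q)$ strictly $2$-constrained, plus the verification that $Q/\Fr(Q)$ is absolutely (or nearly) irreducible over $H^*$ — without which $N_{\Out(Q)}(H^*)/H^*$ need not be small and Lemma \ref{Out(Q:H)}(c) yields nothing useful. When no single characteristic subgroup suffices — or when the natural $2$-local is quasisimple rather than $2$-constrained, as for $\Ly$ (whose involution centralizer is $2\cdot A_{11}$) — one must either iterate, bounding $\Out(\calf_H)$ first and then passing to $\Out(\calf)$ using the second half of Lemma \ref{l:HtoF}(b), or fall back on a direct analysis of the fusion-preserving automorphisms of $S$, tracking the action of $N_G(S)/S$ on the upper- or lower-central factors of $S$; the treatment of $J_3$ at $p=3$ in the proof of Theorem \ref{t:tame} is a prototype. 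By contrast, the injectivity of $\4\kappa_G$ (via \cite{Lyons}) and the vanishing of $\Ker(\mu_G)$ are comparatively routine here, since for these small groups the poset of $2$-centric subgroups, and the associated higher limits, are easy to control.
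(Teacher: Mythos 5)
Your overall strategy is sound, and where it overlaps with the paper ($J_1$ and $M_{11}$) it is essentially identical: the $J_1$ case via the strictly $2$-constrained subgroup $N_G(S)\cong2^3:7:3$ and Lemma \ref{l:HtoF}(a) is exactly the argument given, and $M_{11}$ is handled by identifying $\calf$ as the unique simple fusion system over $\SD_{16}$ and citing \cite[Proposition 4.4]{AOV1}, which also supplies $|\Out(\calf)|=|\Out(\call)|=2$ and the tame realization by $\PSL_3(3)$ (or $\PSU_3(13)$). For the other nine groups, however, the paper does not run your uniform scheme at all: it simply quotes \cite[Proposition 4.5]{AOV1} for $M_{22}$, $M_{23}$, $J_2$, $J_3$, $\McL$ (including the injectivity of $\mu_G$, which is extracted from the proof there together with Proposition \ref{mu-onto}) and \cite[Lemmas 4.2, 5.2 and Proposition 6.3]{AOV3} for $M_{12}$, $\Ly$, $\HS$, $\ON$. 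Your proposed template --- injectivity of $\4\kappa_G$ via \cite{Lyons} or Lemma \ref{l:kappa-inj}(c), the bound $|\Out(\calf)|\le|\Out(G)|$ via Lemmas \ref{l:HtoF}(b) and \ref{Out(Q:H)}, and $\Ker(\mu_G)=1$ via the machinery of Section \ref{s:Ker(mu)} --- is precisely the one the paper deploys for the groups with $|S|\ge2^{10}$ in Propositions \ref{kappa-2-big} and \ref{mu-2-big}, and it could in principle be carried out here too; what it would buy is self-containedness, at the cost of substantial case-by-case work that was already done in \cite{AOV1} and \cite{AOV3}. You rightly flag $\Ly$ (involution centralizer $2\cdot A_{11}$, quasisimple rather than $2$-constrained) as a case where the template breaks and a direct analysis of $\Aut(\calf)\le\Aut(S)$ is needed. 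As written, though, your text for these nine groups is a program rather than a proof, so the citations are doing the real work.

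One technical caution: your parenthetical claim that, via Lemma \ref{l:kappa-inj}(a), injectivity of $\4\kappa_G$ follows once every \emph{outer involution} of $G.2$ has centralizer of $2$-part less than $|S|$ is really the content of Lemma \ref{l:kappa-inj}(b), which is stated only for $p$ odd. At $p=2$ the reduction to involutions can fail: non-injectivity produces an outer $\alpha$ of even order $2m$ centralizing $S$, and if $m$ is even then $\alpha^m$ is an \emph{inner} involution $c_z$ with $z\in Z(S)$, which always centralizes $S$ and so yields no contradiction. This is exactly why the paper falls back on criterion (c) (equivalently, Lyons' theorem) in the proof of Proposition \ref{kappa-2-big}. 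Since you offer those as alternatives, nothing is lost, but the route through (a) and outer involutions alone is not available at $p=2$.
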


\begin{proof} Fix $G$ as above, choose $S\in\syl2{G}$, and set 
$\calf=\calf_S(G)$. There are eleven cases to consider.

\boldd{If $G\cong M_{11}$,} then $\Out(G)=1$. Also, $\calf$ is the unique 
simple fusion system over $\SD_{16}$, so by \cite[Proposition 4.4]{AOV1}, 
$|\Out(\calf)|=2$, and $\kappa_{G^*}$ is an isomorphism for 
$G^*=\PSU_3(13)$ (and $\mu_{G^*}$ is an isomorphism by the proof of that 
proposition). Note that we could also take $G^*=\PSL_3(3)$. 

\boldd{If $G\cong J_1$,} then $\Out(G)=1$. Set $H=N_G(S)$. Since $S\cong 
E_8$ is abelian, fusion in $G$ is controlled by $H\cong2^3:7:3$, and so 
$\calf=\calf_S(H)$ and $\call\cong\call_S^c(H)$. Since $H$ is strictly 
2-constrained, $\Out(\call)\cong\Out(\calf)\cong\Out(H)=1$ by Lemma 
\ref{l:HtoF}(a), and so $\kappa_G$ and $\mu_G$ are isomorphisms. 

\boldd{If $G\cong M_{22}$, $M_{23}$, $J_2$, $J_3$, or $\McL$,} then $\calf$ 
is tame, and $\kappa_G$ is an isomorphism, by \cite[Proposition 4.5]{AOV1}. 
Also, $\mu_G$ was shown to be an injective in the proof of that 
proposition, and hence is an isomorphism by Proposition \ref{mu-onto}.

\boldd{If $G\cong M_{12}$, $\Ly$, $\HS$, or $\ON$,} then $\calf$ is tame, 
and $\kappa_G$ and $\mu_G$ are isomorphisms, by \cite[Lemmas 4.2 \& 5.2 and 
Proposition 6.3]{AOV3}. 
\end{proof}

It remains to consider the larger cases.

\begin{Prop} \label{kappa-2-big}
Let $G$ be a sporadic simple group whose Sylow $2$-subgroups have order at 
least $2^{10}$. Then $\4\kappa_G$ is an isomorphism.
\end{Prop}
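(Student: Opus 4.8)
The plan is to handle each of the "large" sporadic groups $G$ (those with $|S|\ge 2^{10}$) individually, since there are only finitely many of them: $M_{24}$, $J_4$, $\Co_3$, $\Co_2$, $\Co_1$, $Suz$, $He$, $Ru$, $\Fi_{22}$, $\Fi_{23}$, $\Fi'_{24}$, $F_5$, $F_3$, $F_2$, $F_1$. For each, I would separately establish (i) injectivity of $\4\kappa_G$ and (ii) an upper bound $|\Out(\calf)|\le|\Out(G)|$; together with $|\Out(G)|\le 2$ for sporadic groups and the surjectivity built into Proposition~\ref{mu-onto}, these force $\4\kappa_G$ to be an isomorphism. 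Injectivity is the easier half: when $\Out(G)=1$ it is automatic, and when $|\Out(G)|=2$ I would invoke Lemma~\ref{l:kappa-inj}. By \cite[Theorem 1.1]{Lyons}, each sporadic $G$ with $\Out(G)\ne1$ contains a $2$-subgroup whose centralizer in $\Aut(G)$ lies in $\Inn(G)$; pulling a Sylow $2$-subgroup of that centralizer up inside $S$, one sees via Lemma~\ref{l:kappa-inj}(a) (or (c)) that no outer involution can centralize a full Sylow $2$-subgroup, so $\4\kappa_G$ is injective in every case.

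The real work is the surjectivity of $\4\kappa_G$, equivalently the upper bound $|\Out(\calf)|\le|\Out(G)|$. My approach here is the reduction provided by Lemma~\ref{l:HtoF}(b): choose a suitable subgroup $H<G$ containing $S$ — typically the normalizer $H=N_G(Q)$ of a characteristic subgroup $Q$ of $S$, or of a well-chosen central subgroup of order $2$ — so that $H$ is strictly $2$-constrained (or at least $\4\kappa_H$ is onto), and then $|\Out(\calf)|\le|\Out(H)|$. One then computes $\Out(H)$ using Lemma~\ref{Out(Q:H)}: writing $Q=O_2(H)$ with $C_H(Q)\le Q$, part (a) gives the exact sequence $1\to H^1(H^*;Z(Q))\to\Out(H)\to N_{\Out(Q)}(H^*)/H^*$, where $H^*=\Out_H(Q)$. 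The cohomology term is controlled by Lemma~\ref{l:H1} (it often vanishes, or is a small elementary abelian group cut down by the $O_{2'}$-action), and the quotient $N_{\Out(Q)}(H^*)/H^*$ is handled by Lemma~\ref{Out(Q:H)}(c) once one checks that $Q/\Fr(Q)$ is absolutely irreducible as an $\F_2H^*$-module (or $\F_2H_0^*$-module), which in turn is verified using the irreducibility criteria of Lemma~\ref{V_abs_irr}; the very specific case $H^*\cong A_9$ acting on an $8$-dimensional module is exactly what Lemma~\ref{l:A9-rk8} is designed for (it arises for $F_3$). In the cases where no single characteristic subgroup does the job, I expect to need to iterate: first bound $\Out(\calf)$ by $\Out^0(\calf_H)$ for one choice of $H$, then analyze $\calf_H$ further, or to combine the constraint from two different subgroup normalizers.

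The main obstacle I anticipate is the bookkeeping for the handful of groups whose Sylow $2$-structure is genuinely intricate — the Conway groups $\Co_1$, $\Co_2$, the Fischer groups, and above all the Baby Monster $F_2$ and the Monster $F_1$ — where $O_2(N_G(Q))$ is large, the relevant $\F_2$-module $Q/\Fr(Q)$ is high-dimensional, and proving absolute irreducibility plus identifying $N_{\Out(Q)}(H^*)/H^*$ and the cohomology group requires detailed knowledge of the $2$-local geometry (drawn from the Atlas \cite{atlas}, from \cite{GLS3}, and from the literature on these groups). For $Ru$ (flagged by the author as one of the hardest) the difficulty is rather that the natural $2$-local subgroup is not immediately strictly $2$-constrained in the convenient form, so extra care is needed in choosing $Q$ and in checking the hypothesis that $\4\kappa_H$ is onto. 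In each case, though, the strategy is the same — reduce to a $2$-local subgroup, compute its outer automorphism group via Lemmas~\ref{Out(Q:H)} and \ref{l:H1}, establish the needed absolute irreducibility via Lemmas~\ref{V_abs_irr} and \ref{l:A9-rk8}, and conclude $|\Out(\calf)|\le|\Out(G)|$ — so that the injectivity already in hand upgrades $\4\kappa_G$ to an isomorphism. I would organize the proof as a sequence of short paragraphs, one per group (or per small cluster of similar groups), mirroring the structure of Proposition~\ref{kappa-2-small}.
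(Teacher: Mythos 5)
Your plan matches the paper's proof essentially step for step: injectivity via Lemma \ref{l:kappa-inj}, and the bound $|\Out(\calf)|\le|\Out(G)|$ via Lemma \ref{l:HtoF}(b) applied to a strictly $2$-constrained $2$-local subgroup $H\ge S$, with $\Out(H)$ computed from Lemmas \ref{Out(Q:H)}, \ref{l:H1}, \ref{V_abs_irr}, and \ref{l:A9-rk8}, together with the $\Out^0$ refinement for the cases where an outer automorphism of $H$ fails to preserve $G$-fusion and a two-subgroup argument for $\Ru$. The only divergence is that the paper deliberately avoids relying on the unpublished \cite[Theorem 1.1]{Lyons} for injectivity, instead exhibiting for each $G$ with $|\Out(G)|=2$ an explicit $R\nsg S$ with $\Out_{\Aut(G)}(R)>\Out_G(R)$ and applying Lemma \ref{l:kappa-inj}(c).
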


\begin{proof} Fix $G$ as above, choose $S\in\syl2{G}$, and set 
$\calf=\calf_S(G)$. There are fifteen groups to consider, listed in Table 
\ref{tbl:p=2}.

We first check that $\4\kappa_G$ is injective in all cases. This follows 
from a theorem of Richard Lyons \cite[Theorem 1.1]{Lyons}, which says that 
if $\Out(G)\ne1$, then there is a $2$-subgroup of $G$ whose centralizer in 
$\Aut(G)=G.2$ is contained in $G$ \cite[Theorem 1.1]{Lyons}. Since that 
paper has not been published, we give a different argument here: one which is 
based on Lemma \ref{l:kappa-inj}(c), together with some well known (but 
hard-to-find-referenced) descriptions of certain subgroups of $G$ and of 
$\Aut(G)$.


The groups $G$ under consideration for which 
$|\Out(G)|=2$ are listed in Table \ref{tbl:p=2-aut}. In each case, 
$N_G(R)$ has odd index in $G$ (hence $R$ can be assumed to be normal in 
$S$), and $\Out_{\Aut(G)}(R)>\Out_G(R)$. So 
$\4\kappa_G$ is injective by Lemma \ref{l:kappa-inj}(c). 
\begin{table}[ht] 
\[ 
\renewcommand{\arraystretch}{1.4}
\begin{array}{|c||c|c|c|c|c|}
\hline
G & \Suz & \He & \Fi_{22} & \Fi_{24}' & F_5 \\\hline \hline
R & 2^{1+6}_- & 2^{4+4} & 2^{5+8} & 2^{1+12}_+ & 2^{1+8}_+ \\\hline
\Out_G(R) & \Omega_6^-(2) & S_3\times S_3 & S_3\times A_6 & 3\cdot U_4(3).2 & 
(A_5\times A_5).2 \\\hline
\Out_{G.2}(R) & \SO_6^-(2) & 3^2:D_8 & S_3\times S_6 & 3\cdot U_4(3).2^2 & 
(A_5\times A_5).2^2 \\\hline
\textup{Reference} & \textup{{\cite[p.56]{GL}}} 
& \textup{{\cite[\S\,5]{Wilson-aut}}}
& \textup{{\cite[37.8.2]{A-3tr}}} 
& \textup{{\cite[Th.E]{Wilson-Fi}}} 
& \textup{{\cite[Th.2]{NW-HN}}} \\\hline
\end{array} \]
\caption{} \label{tbl:p=2-aut}
\end{table}

It remains to prove that $|\Out(\calf)|\le|\Out(G)|$. Except 
when $G\cong\Ru$, we do this with the help of Lemma 
\ref{l:HtoF}(b) applied with $H$ as in Table \ref{tbl:p=2}. Set $Q=O_2(H)$, 
$\4Q=Q/Z(Q)$, and $H^*=\Out_H(Q)$. 


\begin{table}[ht] 
\[ 
\renewcommand{\arraystretch}{1.4}
\begin{array}{|c||c|c|c|c|c|l|}
\hline
G &|S| & H & |\Out(H)| & |\Out(\calf)| & |\Out(G)| & \textup{Reference}
\\
\hline
\hline
M_{24} &2^{10}& 2^{1+6}_+.L_3(2) & 2=1\cdot2 & 1^* & 1 & 
\textup{\cite[Lm. 39.1.1]{A-spor}} \\
\hline
J_4 &2^{21}& 2^{1+12}_+.3M_{22}:2 & 2=2\cdot1 & 1^* & 1 & \textup{\cite[\S\,1.2]{KW-J4}} \\ 
\hline
\Co_3 &2^{10}& 2\cdot\Sp_6(2) & 1 & 1 & 1 & \textup{\cite[Lm. 
4.4]{Finkelstein-Co3}} \\
\hline
\Co_2 &2^{18}& 2^{1+8}_+.\Sp_6(2) & 1=1\cdot1 & 1 & 1 & 
\textup{\cite[pp.113--14]{Wilson-Co2}} \\ 
\hline
\Co_1 &2^{21}& 2^{11}.M_{24} & 1=1\cdot1 & 1 & 1 & \textup{\cite[Lm. 
46.12]{A-spor}} \\
\hline
\Suz &2^{13}& 2^{1+6}_-.U_4(2) & 2=1\cdot2 & 2 & 2 & \textup{\cite[\S\,2.4]{Wilson-Suz}} \\ 
\hline
\He &2^{10}& 2^{1+6}_+:L_3(2) & 2=1\cdot2 & 2 & 2 & \textup{\cite[p. 253]{Held}} \\
\hline
\Ru &2^{14}& \dbl{2^{3+8}.L_3(2)}{2.2^{4+6}.S_5} &  & 
1^* & 1 
&\dbl{\textup{\cite[12.12]{A-over}}\hfill}{\textup{\cite[Th. J.1.1]{AS}}}\hfill \\ 
\hline
\Fi_{22} &2^{17}& 2^{10}.M_{22} & 2=1\cdot2 & 2 & 2 & 
\textup{\cite[25.7]{A-3tr}} \\
\hline
\Fi_{23} &2^{18}& 2^{11}.M_{23} & 1=1\cdot1 & 1 & 1 & 
\textup{\cite[25.7]{A-3tr}} \\
\hline
\Fi_{24}' &2^{21}& 2^{11}.M_{24} & 2=2\cdot1 & 2 & 2 & 
\textup{\cite[34.8, 34.9]{A-3tr}} \\
\hline
F_5 &2^{14}& 2^{1+8}_+.(A_5\times A_5).2 & 4=2\cdot2 & 2^* & 2 & 
\textup{\cite[\S\,3.1]{NW-HN}} \\ 
\hline
F_3 &2^{15}& 2^{1+8}_+.A_9 & 1=1\cdot1 & 1 & 1 & \textup{\cite[Thm. 2.2]{Wilson-Th}} \\ 
\hline
F_2 &2^{41}& 2^{1+22}_+.\Co_2 & 1=1\cdot1 & 1 & 1 & \textup{\cite[Thm. 2]{MS}} \\ 
\hline
F_1 &2^{46}& 2^{1+24}_+.\Co_1 & 1=1\cdot1 & 1 & 1 & \textup{\cite[Thm. 1]{MS}} \\ 
\hline
\end{array} \]
\caption{} \label{tbl:p=2}
\end{table}

\boldd{When $G\cong\Co_3$,} and $H=N_G(Z(S))\cong2\cdot\Sp_6(2)$ is 
quasisimple, $\Out(H)=1$ since $\Out(H/Z(H))=1$ by Steinberg's theorem (see 
\cite[Theorem 2.5.1]{GLS3}). Also, $\kappa_{H/Z(H)}$ is surjective by 
\cite[Theorem A]{BMO2}, so $\kappa_H$ and $\4\kappa_H$ are surjective by 
\cite[Proposition 2.18]{AOV1}. Hence $|\Out(\calf)|\le|\Out(H)|=1$ by Lemma 
\ref{l:HtoF}(b). 

\boldd{If $G\cong\Co_1$, $\Fi_{22}$, $\Fi_{23}$, or $\Fi_{24}'$,} then $Q$ is 
elementary abelian, $H^*\cong M_k$ for $k=24$, 22, 23, or 24, respectively, 
and $Q$ is an absolutely irreducible $\F_2H^*$-module by 
\cite[22.5]{A-3tr}. Also, $Q=J(S)$ (i.e., $Q$ is the unique abelian 
subgroup of its rank) in each case: by \cite[Lemma 46.12.1]{A-spor} when 
$G\cong\Co_1$, and by \cite[Exercise 11.1, 32.3, or 34.5]{A-3tr} when $G$ is 
one of the Fischer groups. By \cite[Lemma 4.1]{MSt} (or by 
\cite[22.7--8]{A-3tr} when $G$ is a Fischer group), $H^1(H^*;Q)$ has order 
$2$ when $G\cong\Fi_{24}'$ (and $Q$ is the Todd module for $H^*$), and has 
order $1$ when $G$ is one of the other Fischer groups ($Q$ is again the 
Todd module) or $\Co_1$ ($Q$ is the dual Todd module). So 
	\[ |\Out(\calf)| \le |\Out(H)| \le |H^1(H^*;Q)|\cdot|\Out(H^*)| 
	= |\Out(G)|\,: \]
the first inequality by Lemma
\ref{l:HtoF}(b), the second by Lemmas \ref{Out(Q:H)}(a) and \ref{Out(Q:H)}(c.i), 
and the equality by a case-by-case check (see Table \ref{tbl:p=2}). 

In each of the remaining cases covered by Table \ref{tbl:p=2}, 
$H=N_G(Z(S))$ and is strictly $2$-constrained, and $Q$ is extraspecial. We 
apply Lemma \ref{Out(Q:H)}(a) to get an upper bound for $|\Out(H)|$. This 
upper bound is listed in the fourth column of Table \ref{tbl:p=2} in the 
form $m=a\cdot b$, where $|H^1(H^*;Z(Q))|\le a$ and 
$|N_{\Out(Q)}(H^*)/H^*|\le b$. By Lemma \ref{l:H1}(b), 
$H^1(H^*;Z(Q))\cong\Hom(H^*,C_2)=1$ except when $G\cong J_4$ 
or $F_5$, in which cases it has order $2$. This explains the first factor 
in the fourth column. The second factor will be established case-by-case, 
as will be the difference between $|\Out(\calf)|$ and $|\Out(H)|$ when 
there is one (noted by an asterisk).

\boldd{If $G\cong M_{24}$ or $\He$,} then $H\cong2^{1+6}_+.L_3(2)$, and 
$\4Q$ splits as a sum of two nonisomorphic absolutely irreducible 
$\F_2H^*$-modules which differ by an outer automorphism of $H^*$. Hence 
$N_{\Out(Q)}(H^*)\cong L_3(2):2$, and 
$|\Out(H)|\le|N_{\Out(Q)}(H^*)/H^*|=2$. These two irreducible submodules in 
$\4Q$ lift to rank 4 subgroups of $Q$, of which exactly one is radical 
(with automizer $\SL_4(2)$) when $G\cong M_{24}$ (see \cite[Lemma 
40.5.2]{A-spor}). Since an outer automorphism of $H$ exchanges these two 
subgroups, it does not preserve fusion in $G$ when $G\cong M_{24}$, hence 
is not in $\Out^0(H)$ in the notation of Lemma \ref{l:HtoF}(b). So 
$|\Out(\calf)|\le|\Out^0(H)|=1$ in this case.

\boldd{If $G\cong J_4$,} then $H\cong2^{1+12}_+.3M_{22}:2$. The group 
$3M_{22}$ has a 6-dimensional absolutely irreducible representation over 
$\F_4$, which extends to an irreducible 12-dimensional representation of 
$3M_{22}:2$ realized over $\F_2$. (See \cite[p. 487]{KW-J4}: 
$3M_{22}<\SU_6(2)<\SO_{12}^+(2)$.) Hence $|\Out(H)|\le2$ by Lemmas 
\ref{Out(Q:H)}(a,c) and \ref{l:H1}(b), generated by the class of 
$\beta\in\Aut(H)$ of order $2$ which is the identity on $O^2(H)$ and on 
$H/Z(H)$. 

By \cite[Table 1]{KW-J4}, there is a four-group of type $\2{aab}$ in $H$, 
containing $Z(Q)=Z(H)$ (generated by an element of class $\2a$), whose 
image in $H/O_{2,3}(H)\cong M_{22}:2$ is generated by an outer involution 
of class $\2b$ in $\Aut(M_{22})$. Thus there are cosets of $Z(Q)$ in 
$H\sminus O^2(H)$ which contain $\2a$- and $\2b$-elements. Hence $\beta|_S$ 
is not $G$-fusion preserving, so $|\Out(\calf)|\le|\Out^0(H)|=1$ by Lemma 
\ref{l:HtoF}(b). 

\boldd{If $G\cong\Co_2$,} then $H=N_G(z)\cong2^{1+8}_+.\Sp_6(2)$. By 
\cite[Lemma 2.1]{FSmith}, the action of $H/Q$ on $\4Q$ is transitive on 
isotropic points and on nonisotropic points, and hence is irreducible. If 
$\4Q$ is not absolutely irreducible, then $\End_{\F_p[H/Q]}(\4Q)\ge\F_4$ by 
Lemma \ref{V_abs_irr}(a), so $H/Q\cong\Sp_6(2)$ embeds into $\SL_4(4)$, 
which is impossible since $\Sp_6(2)$ contains a subgroup of type $7:6$ 
while $\SL_4(4)$ does not.

Alternatively, $\4Q$ is absolutely irreducible by a theorem of Steinberg 
(see \cite[Theorem 2.8.2]{GLS3}), which says roughly that each irreducible 
$\4\F_2\Sp_6(\4\F_2)$-module which is ``small enough'' is still irreducible 
over the finite subgroup $\Sp_6(2)$.

Thus by Lemma \ref{Out(Q:H)}(c.i), $N_{\Out(Q)}(H^*)/H^*$ is 
isomorphic to a subgroup of $\Out(H^*)$, where 
$\Out(H^*)=1$ (see \cite[Theorem 2.5.1]{GLS3}). This 
confirms the remaining entries for $G$ in Table \ref{tbl:p=2}.

\boldd{If $G\cong\Suz$,} then $H\cong2^{1+6}_-.\Omega_6^-(2)$, $H^*$ has 
index $2$ in $\Out(Q)\cong\SO_6^-(2)$, and $|\Out(H)|\le2$ by Lemmas 
\ref{Out(Q:H)}(a) and \ref{l:H1}(b).

\boldd{If $G\cong F_5$,} then $H=N_G(z)\cong2^{1+8}_+.(A_5\wr2)$ for 
$z\in\2b$. As described in \cite[\S\,3.1]{NW-HN} and in \cite[Lemma 
2.8]{Harada-HN}, $O^2(H^*)$ acts on $Q$ as $\Omega_4^+(4)$ for some 
$\F_4$-structure on $\4Q$. Also, the $\2b$-elements in $Q\sminus Z(Q)$ are 
exactly those involutions which are isotropic under the $\F_4$-quadratic 
form on $\4Q\cong\F_4^4$.

Now, $H^*$ has index 2 in its normalizer $\SO_4^+(4).2^2$ in 
$\Out(Q)\cong\SO_8^+(2)$, so $|\Out(H)|\le4$ by Lemmas \ref{Out(Q:H)}(a) 
and \ref{l:H1}(b). Let $\beta\in\Aut(H)$ be the nonidentity automorphism 
which is the identity on $O^2(H)$ and on $H/Z(H)$. To see that 
$|\Out(\calf)|\le2$, we must show that $\beta$ does not preserve fusion in 
$S$. 

By \cite[p. 364]{NW-HN}, if $W=\gen{z,g}\cong E_4$ for $z\in Z(H)$ and 
$g\in H\sminus O^2(H)$, then $W$ contains an odd number of $\2a$-elements, 
and hence $g$ and $zg$ are in different classes (see also \cite[Lemma 
2.9.ii]{Harada-HN}). Hence $\beta$ is not fusion preserving since it 
doesn't preserve $G$-conjugacy classes. By Lemma \ref{l:HtoF}(b),
$|\Out(\calf)|\le|\Out^0(H)|\le2=|\Out(G)|$.

\boldd{If $G\cong F_3$,} then $H\cong2^{1+8}_+.A_9$. By 
\cite[\S\,3]{Parrott-Th}, the action of $A_9$ on $\4Q$ is not the 
permutation representation, but rather that representation twisted by the 
triality automorphism of $\SO_8^+(2)$. By \cite[3.7]{Parrott-Th}, if $x\in 
H^*\cong A_9$ is a $3$-cycle, then $C_{\4Q}(x)=1$. Hence we are in the 
situation of Lemma \ref{l:A9-rk8}, and $N_{\Out(Q)}(H^*)/H^*=1$ by that 
lemma. So $\Out(H)=1$ by Lemmas \ref{Out(Q:H)}(a) and \ref{l:H1}(b), and 
$\Out(\calf)=1$.

\boldd{If $G\cong F_2$ or $F_1$,} then $H=H_1\cong2^{1+22}_+.\Co_2$ or 
$2^{1+24}_+.\Co_1$, respectively. Set $Q=O_2(H)$ and $\4Q=Q/Z(Q)$. If 
$G\cong F_1$, then $\4Q\cong\til\Lambda$, the mod 2 Leech lattice, and is 
$\Co_1$-irreducible by \cite[23.3]{A-spor}. If $G\cong F_2$, then $\4Q\cong 
v_2^\perp/\gen{v_2}$ where $v_2\in\til\Lambda$ is the image of a 2-vector. 
The orbit lengths for the action of $\Co_2$ on $\til\Lambda/\gen{v_2}$ are 
listed in \cite[Table I]{Wilson-Co2}, and from this one sees that 
$v_2^\perp/\gen{v_2}$ is the only proper nontrivial $\Co_2$-linear subspace (the 
only union of orbits of order $2^k$ for $0<k<23$), and 
hence that $\4Q$ is $\Co_2$-irreducible. The absolute irreducibility of 
$\4Q$ (in both cases) now follows from Lemma \ref{V_abs_irr}(b), applied 
with $H=\Co_2$ or $U_6(2):2$, respectively.

Since $\Out(\Co_1)\cong\Out(\Co_2)=1$, $N_{\Out(Q)}(H^*)/H^*=1$ by Lemma 
\ref{Out(Q:H)}(c.i), and so $\Out(H)=1$ in both cases.

\smallskip

In in the remaining case, we need to work with two of the 2-local subgroups 
of $G$.

\boldd{Assume $G\cong\Ru$.} We refer to \cite[12.12]{A-over} and \cite[Theorem 
J.1.1]{AS} for the following properties. There are two conjugacy classes of 
involutions in $G$, of which the $\2a$-elements are 2-central. There 
are subgroups $H_1,H_3<G$ containing $S$ such that 
	\[ H_1\cong 2.2^{4+6}.S_5 \qquad 
	H_3\cong 2^{3+8}.L_3(2). \]
Set $Q_i=O_2(H_i)$ and $V_i=Z(Q_i)$; $V_1\cong C_2$ and $V_3\cong E_8$, 
and both are $\2a$-pure and normal in $S$. Also, $Q_1/V_1$ and $Q_3$ are special of types 
$2^{4+6}$ and $2^{3+8}$, respectively, and $Z(Q_3)$ and $Q_3/Z(Q_3)$ are 
the natural module and the Steinberg module, respectively, for 
$H_3/Q_3\cong\SL_3(2)$. 

Let $V_5<Q_1$ be such that $V_5/V_1=Z(Q_1/V_1)$. Then $V_5$ is of type 
$\2a^5$, and $C_{Q_1}(V_5)\cong Q_8\times E_{16}$. Also, $H_1/Q_1\cong S_5$, 
and $V_5/V_1$ and $Q_1/C_{Q_1}(V_5)$ are both natural modules for 
$O^2(H_1/Q_1)\cong\SL_2(4)$. Also, $V_3/V_1=C_{V_5/V_1}((S/Q_1)\cap 
O^2(H_1/Q_1))$: thus a 1-dimensional subspace of $V_5/V_1$ as an 
$\F_4$-vector space. 

The homomorphism $Q_1/C_{Q_1}(V_5)\too\Hom(V_5/V_1,V_1)$ which 
sends $g$ to $(x\mapsto[g,x])$ is injective and hence an isomorphism. So 
$Q_3\cap Q_1=C_{Q_1}(V_3)$ has index $4$ in $Q_1$, and hence 
$|Q_3Q_1/Q_3|=4$.

Fix $\beta\in\Aut(\calf)$. By Lemma \ref{l:HtoF}(a), for $i=1,3$, 
$\4\kappa_{H_i}$ is an isomorphism, so $\beta$ extends to an automorphism 
$\beta_i\in\Aut(H_i)$. Since $V_3$ is the natural module for 
$H_3/Q_3\cong\SL_3(2)$, $\beta_3|_{V_3}=c_x$ for some $x\in H_3$, and $x\in 
N_{H_3}(S)$ since $\beta_3(S)=S$. Then $x\in S$ since 
$N_{H_3/Q_3}(S/Q_3)=S/Q_3$, and upon replacing $\beta$ by 
$c_x^{-1}\circ\beta$ and $\beta_i$ by $c_x^{-1}\circ\beta_i$ ($i=1,3$), we 
can arrange that $\beta|_{V_3}=\Id$.

Since $\beta|_{V_3}=\Id$, $\beta_3$ also induces the identity on 
$H_3/Q_3\cong L_3(2)$ (since this acts faithfully on $V_3$), and on 
$Q_3/V_3\cong2^8$ (since this is the Steinberg module and hence absolutely 
irreducible). Since $Q_3/V_3$ is $H_3/Q_3$-projective (the Steinberg 
module), $H^1(H_3/Q_3;Q_3/V_3)=0$, so by Lemma \ref{Out(Q:H)}(b) (applied 
with $Q_3/V_3$ in the role of $R=Q$), the automorphism of $H_3/V_3$ induced 
by $\beta_3$ is conjugation by some $yV_3\in Q_3/V_3$. Upon replacing 
$\beta$ by $c_y^{-1}\circ\beta$ and similarly for the $\beta_i$, we can 
arrange that $[\beta_3,H_3]\le V_3$. 

Now, $Q_3/V_3\not\cong V_3$ are both irreducible $\F_2[H_3/Q_3]$-modules, 
so $\Hom_{H_3/Q_3}(Q_3/V_3,V_3)=0$. By Lemma \ref{Out(Q:H)}(b) again, 
applied this time with $Q_3\ge V_3$ in the role of $Q\ge R$, 
$\beta|_{Q_3}=\Id$. 

Now consider $\beta_1\in\Aut(H_1)$. Since $\beta_1$ is the identity on 
$Q_3=C_S(V_3)\ge C_{S}(V_5)=C_{H_1}(V_5)$, $\beta_1\equiv\Id_{H_1}$ 
modulo $Z(C_S(V_5))=V_5$ (since $c_g=c_{\beta_1(g)}\in\Aut(C_S(V_5))$ for 
each $g\in H_1$). So by Lemma \ref{Out(Q:H)}(b), there is 
$\psi\in\Hom_{H_1/Q_1}(Q_1/V_5,V_5/V_1)$ such that $\beta(g)\in 
g\psi(gV_5)$ for each $g\in Q_1$. Also, $\Im(\psi)\le V_3/V_1$ since 
$[\beta,S]\le V_3$, and hence $\psi=1$ since $V_5/V_1$ is irreducible. Thus 
$[\beta_1,Q_1]\le V_1$. 

We saw that $|Q_1Q_3/Q_3|=4$, so $\Aut_{Q_1}(V_3)$ is the group of all 
automorphisms which send $V_1$ to itself and induce the identity on 
$V_3/V_1$. Fix a pair of generators $uQ_3,vQ_3\in Q_1Q_3/Q_3$. Then 
$\beta(u)\in uV_1$ and $\beta(v)\in vV_1$, and each of the four possible 
automorphisms of $Q_3Q_1$ (i.e., those which induce the identity on $Q_3$ 
and on $Q_1Q_3/V_1$) is conjugation by some element of $V_3$ (unique 
modulo $V_1$). So after conjugation by an appropriate element of $V_3$, we 
can arrange that $\beta|_{Q_1Q_3}=\Id$ (and still $[\beta_3,H_3]\le V_3$).

Let $V_2<V_3$ be the unique subgroup of rank $2$ which is normal in $S$, 
and set $S_0=C_S(V_2)$. Thus $|S/S_0|=2$, and $S_0/Q_3\cong E_4$. Fix 
$w\in(S_0\cap Q_1Q_3)\sminus Q_3$ (thus $wQ_3$ generates the center of 
$S/Q_3\cong D_8$). Choose $g\in N_{H_3}(V_2)$ of order $3$; thus $g$ acts 
on $V_2$ with order $3$ and acts trivially on $V_3/V_2$. So 
$V_3\gen{g}\cong A_4\times C_2$, and since $|\beta_3(g)|=3$, we have 
$\beta_3(g)=rg$ for some $r\in V_2$. Set $w'=\9gw\in S_0$. Then 
$S_0=Q_3\gen{w,w'}$, $\beta(w)=w$ since $w\in Q_1Q_3$, and 
$\beta(w')=\beta(gwg^{-1})=rgwg^{-1}r^{-1}=\9rw'=w'$: the last equality 
since $w'\in S_0=C_S(V_2)$. Since $S=S_0Q_1$, this proves that 
$\beta=\Id_S$, and hence that $\Out(\calf)=1$.

This finishes the proof of Proposition \ref{kappa-2-big}.
\end{proof}


\bigskip

\section{Tameness at odd primes}
\label{s:kappa-odd}

We now turn to fusion systems of sporadic groups at odd primes, and first 
look at the groups whose $p$-fusion systems are not simple.

\begin{Prop} \label{kappa-nonsimple}
Let $p$ be an odd prime, and let $G$ be a sporadic simple group whose Sylow 
$p$-subgroups are nonabelian and whose $p$-fusion system is not simple. 
Then $\4\kappa_G$ is an isomorphism.
\end{Prop}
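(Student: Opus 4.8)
The plan is to treat, one family at a time, the pairs $(G,p)$ in Table \ref{tbl:odd-p} for which $S\in\sylp{G}$ is nonabelian and $\calf=\calf_S(G)$ is not simple. These are the five pairs where $\calf$ is almost simple — realized, as indicated in the table, by $L=M_{12}:2$ (for $(M_{24},3)$), $L=\lie2F4(2)$ (for $(J_4,3)$ and $(\Ru,3)$), $L=\SO_5(5)$ (for $(\Co_1,5)$), and $L=L_3(5):2$ (for $(\Ru,5)$) — together with the seven pairs where $\calf$ is constrained: $(J_2,3)$, $(J_3,3)$, $(\Co_3,5)$, $(\Co_2,5)$, $(\HS,5)$, $(\McL,5)$, and $(J_4,11)$. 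In each case it suffices to show that $\4\kappa_G$ is injective and that $|\Out(\calf)|\le|\Out(G)|$.

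For the almost simple cases $\Out(G)=1$, so injectivity is automatic and the point is to prove $\Out(\calf)=1$. When $\calf$ is the $3$-fusion system of $\lie2F4(2)$, this holds by \cite[Proposition 6.9]{BMO2}. Otherwise put $L_0=O^{p'}(L)$, so that $L_0\cong M_{12}$, $\Omega_5(5)$, or $\PSL_3(5)$: then $\calf_0=\calf_S(L_0)$ is simple by \cite[16.3 \& 16.10]{A-gfit}, $\kappa_{L_0}$ is an isomorphism by Proposition \ref{kappa-odd} (for $M_{12}$) or by \cite[Theorem A]{BMO2} (for the groups of Lie type), and $\Out(L)=1$ because $\Out(L_0)=\Out_L(L_0)\cong C_2$ (e.g.\ \cite[Table 5.3 \& Theorem 2.5.12]{GLS3}). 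A nonidentity element of $\Out(\calf)$ would then, by the pullback square of \cite[Lemma 2.15]{AOV1} and since $\kappa_{L_0}$ is an isomorphism, have to come from a nonidentity element of $\Out(L)$, which is impossible; hence $\Out(\calf)=1$.

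For the constrained cases I would first handle injectivity of $\4\kappa_G$: it is vacuous when $\Out(G)=1$, and when $|\Out(G)|=2$ a check of the involution centralizers recorded in \cite[Table 5.3]{GLS3} shows that no involution of $\Aut(G)$ centralizes a Sylow $p$-subgroup, so $\4\kappa_G$ is injective by Lemma \ref{l:kappa-inj}(b). Since $\calf$ is constrained, $N_G(S)$ controls $p$-fusion in $G$ by \cite[Theorem 15.6]{A-gfit} and is strictly $p$-constrained, so $\Out(\calf)\cong\Out(N_G(S))$ by Lemma \ref{l:HtoF}(a); then Lemma \ref{Out(Q:H)}(a), applied with $Q=S$ and with the term $H^1(\Out_G(S);Z(S))$ vanishing by Lemma \ref{l:H1}, gives $|\Out(\calf)|\le|N_{\Out(S)}(\Out_G(S))/\Out_G(S)|$. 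In the six cases where $S$ is extraspecial of order $p^3$ and exponent $p$ one has $\Out(S)\cong\GL_2(p)$, $\Out_G(S)$ is the explicit subgroup listed in the table, and Dickson's classification of the subgroups of $\PSL_2(p)$ (\cite[Theorem 3.6.25]{Sz1}) lets one compute $N_{\GL_2(p)}(\Out_G(S))$ and verify that $|N_{\GL_2(p)}(\Out_G(S))/\Out_G(S)|\le|\Out(G)|$ in each instance.

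The remaining case $(G,p)=(J_3,3)$, where $|S|=3^5$ is no longer extraspecial, is where I expect the main difficulty, since $\Aut(\calf)$ must be analyzed directly. Starting from Janko's description of $S$ (\cite[Lemma 5.4]{Janko}) — $T=\Omega_1(S)\cong C_3^3$ properly contains $Z(S)\cong C_3^2\le[S,S]$, the quotient $S/Z(S)$ is extraspecial of order $27$ with center $T/Z(S)$, and $N_G(S)/S\cong C_8$ acts faithfully on both $S/T$ and $Z(S)$ — I would exploit the $N_G(S)$-equivariant commutator pairing $\Phi\colon S/T\times T/Z(S)\to Z(S)$. An element $x\in N_G(S)$ of order $8$ acts on $S/T\cong C_3^2$ through $\GL_2(3)\sminus\SL_2(3)$, hence inverts $T/Z(S)$, and an eigenvalue comparison shows there is no $x$-equivariant isomorphism $S/T\cong Z(S)$. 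Given $\alpha\in\Aut(\calf)$ with $\alpha|_{Z(S)}=\Id$, compatibility with $\Phi$ and the fact that $S/Z(S)$ is extraspecial force $\alpha$ to act trivially on $T/Z(S)$ and on $S/T$; reducing modulo $\Inn(S)$ to the case $\alpha|_T=\Id$, one writes $\alpha(g)=g\varphi(gT)$ for an $x$-equivariant $\varphi\in\Hom(S/T,Z(S))$, whence $\varphi=1$ and $\alpha=\Id_S$. Thus $\Aut(\calf)$ acts on $Z(S)$ with kernel $\Inn(S)$, so $\Aut(\calf)/\Inn(S)$ embeds into $\Aut(Z(S))\cong\GL_2(3)$, where it normalizes the faithful image of $\Out_G(S)\cong C_8$; since a Sylow $2$-subgroup of $\GL_2(3)$ is semidihedral of order $16$, this forces $|\Out(\calf)|\le2=|\Out(G)|$, which with the injectivity above completes the proof.
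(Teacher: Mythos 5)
Your proposal is correct and follows essentially the same route as the paper: the same split into the constrained and almost simple cases, the same injectivity argument via Lemma \ref{l:kappa-inj}(b), the same bound $|\Out(\calf)|\le|N_{\Out(S)}(\Out_G(S))/\Out_G(S)|$ from Lemmas \ref{l:HtoF}(a) and \ref{Out(Q:H)}(a) for the six extraspecial cases, the same commutator-pairing analysis for $(J_3,3)$, and the same pullback-square argument for the almost simple cases. The only (harmless) imprecision is calling $N_G(S)$ itself strictly $p$-constrained; the paper passes to $N_G(S)/O_{p'}(N_G(S))$, which realizes the same fusion system.
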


\begin{proof} Fix $S\in\sylp{G}$, and set $\calf=\calf_S(G)$. By 
\cite[16.10]{A-gfit}, if $\calf$ is not simple, then either $N_G(S)$ 
controls fusion in $G$ (``$G$ is $p$-Goldschmidt'' in the terminology of 
\cite{A-gfit}), in which case $S\nsg\calf$ and $\calf$ is constrained, or 
$\calf$ is almost simple and is realized by an almost simple group $L$ 
given explicitly in \cite[16.10]{A-gfit} and also in Table \ref{tbl:odd-p}. 
We handle these two cases separately.

\smallskip

\noindent\textbf{Case 1: } Assume first that $S\nsg\calf$ and hence $\calf$ 
is constrained. By \cite[Theorem 15.6]{A-gfit}, there are seven such cases 
$(G,p)$, also listed in Table \ref{tbl:odd-p}. By the tables in \cite[Table 
5.3]{GLS3}, in each case where $\Out(G)\ne1$, no involution of $\Aut(G)$ 
centralizes a Sylow $p$-subgroup. Thus $\4\kappa_G$ is injective in all 
seven cases by Lemma \ref{l:kappa-inj}(b). Set $H=N_G(H)/O_{p'}(N_G(H))$. 
Since $N_G(S)$ controls $p$-fusion in $G$,
	\beqq \Out(\calf)\cong\Out(H) \quad\textup{injects into}\quad 
	N_{\Out(S)}(\Out_G(S))/\Out_G(S) \,: \label{e:Out(F)u.b.} \eeqq
the isomorphism by Lemma \ref{l:HtoF}(a) and the injection by \ref{Out(Q:H)}(a). 

In the six cases described in Table \ref{tbl:constr.}, $S$ is extraspecial 
of order $p^3$ and exponent $p$.
\begin{table}[ht] 
\renewcommand{\arraystretch}{1.3}
\[ \begin{array}{|c||c|c|c|c|c|c|}
\hline
(G,p) & (J_2,3) & (\Co_3,5) & (\Co_2,5) & (\HS,5) & (\McL,5) & (J_4,11) 
\\\hline
|\Out(G)| & 2 & 1 & 1 & 2 & 2 & 1 \\\hline
\Out_G(S) & C_8 & C_{24}\rtimes C_2 & 4\cdot\Sigma_4 & C_8\rtimes C_2 & 
C_3\rtimes C_8 & 5\times2\cdot\Sigma_4 \\\hline
\end{array} \]
\caption{} \label{tbl:constr.}
\end{table}
Note that $\Out(S)\cong\GL_2(p)$. Using that $\PGL_2(3)\cong\Sigma_4$, 
$\PGL_2(5)\cong\Sigma_5$, and $\Sigma_4$ is maximal in $\PGL_2(11)$, we see 
that in all cases, $|\Out(\calf)|\le|\Out(G)|$ by \eqref{e:Out(F)u.b.}. So 
$\4\kappa_G$ and $\kappa_G$ are isomorphisms since they are injective. 

It remains to consider the case $(G,p)=(J_3,3)$, where $|S|=3^5$. Set 
$T=\Omega_1(S)$ and $Z=Z(S)$. By \cite[Lemma 5.4]{Janko}, $T\cong C_3^3$, 
$T>Z\cong C_3^2$, $Z\le[S,S]$, and there are two classes of elements 
of order $3$: those in $Z$ and those in $T\sminus Z$. Also, $S/Z$ 
is extraspecial of order $3^3$ with center $T/Z$, and $N_G(S)/S\cong 
C_8$ acts faithfully on $S/T$ and on $Z$. 

Consider the bilinear map
	\[ \Phi\: S/T \times T/Z \Right4{[-,-]} Z \]
where $\Phi(gT,hZ)=[g,h]$. This is nontrivial (otherwise we would have 
$T\le Z$), and hence is surjective since $N_G(S)/S\cong C_8$ acts 
faithfully on $Z$. Fix $x\in N_G(S)$ and $h\in T$ whose cosets generate 
the quotient groups $N_G(S)/S$ and $T/Z$, respectively. Since $x$ acts 
on $S/T\cong C_3^2$ with order $8$, it acts via an element of 
$\GL_2(3)\sminus\SL_2(3)$, and hence acts on $T/Z$ by inverting it (recall 
that $S/Z$ is extraspecial). So if we let $\Phi_h\:S/T\too Z$ be the 
isomorphism $\Phi_h(gT)=[g,h]$, then 
$\Phi_h(\9xgT)=[\9xg,h]=\9x[g,h^{-1}]=\9x\Phi_h(gT)^{-1}$. Thus if 
$\lambda,\lambda^3\in\F_9$ are the eigenvalues for the action of $x$ on 
$S/T$ (for some $\lambda$ of order $8$), then $\lambda^{-1},\lambda^{-3}$ 
are the eigenvalues for the action of $x$ on $Z$. So there is no nontrivial 
homomorphism $S/T\too Z$ that commutes with the actions of $x$.

Let $\alpha\in\Aut(\calf)$ be such that $\alpha|_{Z}=\Id$. Since 
$\alpha$ commutes with $\Phi$, it must either induce the identity on $S/T$ 
and on $T/Z$ or invert both quotient groups, and the latter is 
impossible since $S/Z$ is extraspecial. Since $\alpha$ is the identity 
on $Z$ and on $T/Z$, $\alpha|_T$ is conjugation by some element of 
$S$, and we can assume (modulo $\Inn(S)$) that $\alpha|_T=\Id$. Thus there 
is $\varphi\in\Hom(S/T,Z)$ such that $\alpha(g)=g\varphi(gT)$ for each 
$g\in S$, and $\varphi$ commutes with the action of $xS\in N_G(S)/S$. We 
just showed that this is only possible when $\varphi=1$, and conclude that 
$\alpha=\Id_S$.

Thus $\Aut(\calf)$ is isomorphic to a subgroup of 
$\Aut(Z)\cong\GL_2(3)$. Since $\Aut_G(S)\cong C_8$ acts faithfully on 
$Z$, and the Sylow 2-subgroups of $\GL_2(3)$ are semidihedral of order 
16, this shows that $|\Aut(\calf)|\le16$ and $|\Out(\calf)|\le2$. Since 
$\4\kappa_G$ is injective, it is an isomorphism.

\smallskip

\noindent\textbf{Case 2: } We now show that $\kappa_G$ is an isomorphism 
when $\calf$ is almost simple. Let $L$ be as in Table \ref{tbl:odd-p}. If 
$L\cong\lie2F4(2)$ and $p=3$, then $\Out(\calf)\cong\Out(L)=1=\Out(G)$ 
since $\4\kappa_L$ is an isomorphism by \cite[Proposition 6.9]{BMO2}. 

Otherwise, set $L_0=O^{p'}(L)$ and $\calf_0=\calf_S(L_0)$. By \cite[16.3 \& 
16.10]{A-gfit}, $\calf_0$ is simple, and hence $Z(\calf_0)=1$, when 
$L_0\cong M_{12}$ and $p=3$, and when $L_0\cong\Omega_5(5)$ or $\PSL_3(5)$ 
and $p=5$. Also, $\4\kappa_{L_0}$ is an isomorphism in these cases by 
Proposition \ref{kappa-odd} and \cite[Theorem A]{BMO2}, respectively, and 
$L\cong\Aut(L_0)$ and $|L/L_0|=2$ (hence $\Out(L)=1$) by 
\cite[16.10]{A-gfit}.
If $\Out(\calf)\ne1$, then there is $\alpha\in\Aut(\calf)\sminus\autf(S)$ 
such that $\alpha|_{S_0}=\Id$, and by the 
pullback square in \cite[Lemma 2.15]{AOV1}, this would lie in the image of a 
nontrivial element of $\Out(L)=1$. Thus $\Out(\calf)=1$, 
$\Out(G)=1$ by Table \ref{tbl:odd-p}, and so $\4\kappa_G$ and hence 
$\kappa_G$ are isomorphisms. 
\end{proof}

It remains to handle the cases $(G,p)$ where the $p$-fusion system of $G$ 
is simple.

\begin{Prop} \label{kappa-odd}
Let $p$ be an odd prime, and let $G$ be a sporadic simple group whose 
$p$-fusion system is simple. Then $\4\kappa_G$ is an isomorphism, except 
when $p=3$ and $G\cong\He$, in which case $|\Out(G)|=2$ and 
$|\Out(\calf_S(G))|=1$ for $S\in\syl3{G}$.
\end{Prop}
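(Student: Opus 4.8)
The plan is to handle the sporadic groups $G$ with simple $p$-fusion system at odd $p$ case by case, following the pattern already established for the constrained and almost-simple cases. For each such pair $(G,p)$, I would first settle the injectivity of $\4\kappa_G$: when $|\Out(G)|=1$ this is automatic, and when $|\Out(G)|=2$ I would look for an involution-free centralizer of a Sylow $p$-subgroup (invoking Lemma \ref{l:kappa-inj}(b)), or better, exhibit a subgroup $Q\nsg S$ with $\Out_{\Aut(G)}(Q)>\Out_G(Q)$ so that Lemma \ref{l:kappa-inj}(c) applies. The relevant data about outer automorphisms acting on $p$-local subgroups can be read off from \cite{Lyons}, \cite{GLS3}, and the various references already cited in the $p=2$ table. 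The expected failure of injectivity-or-surjectivity is precisely at $(G,p)=(\He,3)$, where $|\Out(G)|=2$ but the outer automorphism centralizes a Sylow $3$-subgroup, so $\4\kappa_G$ has nontrivial kernel; here I would show separately that $\Out(\calf)=1$ (e.g.\ via Lemma \ref{l:HtoF}(b) with a suitable $3$-local subgroup $H$), which gives the stated exceptional conclusion.

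For surjectivity (equivalently, the bound $|\Out(\calf)|\le|\Out(G)|$) in the non-exceptional cases, the main tool is Lemma \ref{l:HtoF}(b): for each $(G,p)$ I would choose a $p$-local subgroup $H<G$ containing $S$ — typically $H=N_G(Z(S))$, or $N_G(Q)$ for $Q$ characteristic in $S$, or the normalizer of a well-chosen central subgroup of order $p$ — arranged so that $H$ controls the relevant part of fusion, and then bound $|\Out(\calf)|$ by $|\Out^0(H)|\le|\Out(H)|$. The group $\Out(H)$ is in turn estimated using Lemma \ref{Out(Q:H)}: part (a) gives the exact sequence relating $\Out(H)$ to $H^1(H^*;Z(Q))$ and $N_{\Out(Q)}(H^*)/H^*$, part (b) handles the case where the relevant module $Q$ is not quite irreducible, and part (c) reduces $N_{\Out(Q)}(H^*)/H^*$ to a piece of $\Out(H^*)$ once $\4Q=Q/\Fr(Q)$ is shown absolutely irreducible. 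Absolute irreducibility of these modules I would get from Lemma \ref{V_abs_irr} (often via $\dim C_V(H)=1$ for a suitable $H$), Steinberg-type tensor/restriction theorems, or explicit references; and $H^1(H^*;Z(Q))$ from Lemma \ref{l:H1}. For the groups whose Sylow $p$-subgroup is extraspecial of order $p^3$ (the daggered entries), I would instead argue directly with $\Out(S)\cong\GL_2(p)$ and the subgroup structure of $\PGL_2(p)$, exactly as in Case 1 of Proposition \ref{kappa-nonsimple}.

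A complementary line of argument, available because $p$ is odd, is to bootstrap from already-known tame fusion systems. When $\calf$ is simple and the $p$-local structure of $G$ coincides with that of a group of Lie type or a smaller sporadic group whose fusion system is already known to be tamely realized — for instance using \cite[Theorem A]{BMO2} for Lie type groups and \cite[Proposition 2.18]{AOV1} to transfer surjectivity of $\kappa$ through an appropriate subgroup — I would deduce surjectivity of $\4\kappa_G$ from that of $\4\kappa_H$ for a suitable $H$ containing $S$ that is strictly $p$-constrained or has $\4\kappa_H$ onto; this is exactly how $\Co_3$ at $p=2$ was treated in Proposition \ref{kappa-2-big}, and how $\Omega_5(5)$ and $\PSL_3(5)$ enter Case 2 of Proposition \ref{kappa-nonsimple}. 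Combining injectivity with the bound $|\Out(\calf)|\le|\Out(G)|$ forces $\4\kappa_G$ to be an isomorphism (since it is then an injection between groups of the same order) in every case except $\He$ at $p=3$.

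The hard part will be the largest groups — $F_1$, $F_2$, $F_3$, $F_5$, and the Fischer groups at $p=3$, together with $\Co_1$ at $p=5$ (where $\calf$ is almost simple, realized by $\SO_5(5)$, so this actually falls under Proposition \ref{kappa-nonsimple}) — where identifying the right $p$-local subgroup $H$ and pinning down the module structure of $\4Q$ precisely enough to compute $N_{\Out(Q)}(H^*)/H^*$ and $H^1(H^*;Z(Q))$ requires delving into the specialized literature on each group, and where the $3$-locals are larger and less uniform than the extraspecial-type $2$-locals. The exceptional case $\He$ at $p=3$ is also delicate: one must verify that the honest outer automorphism of $\He$ genuinely induces the identity on $\calf_S(G)$, which I would do by showing it centralizes $S$ up to $\Inn$, or equivalently by computing $|\Out(\calf)|=1$ directly from a $3$-local subgroup via Lemma \ref{l:HtoF}(b) and \ref{Out(Q:H)}, and separately confirming $|C_G(\alpha)|$ is divisible by $|S|$ for the outer involution $\alpha$ via Lemma \ref{l:kappa-inj}(a).
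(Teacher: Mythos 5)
Your overall strategy coincides with the paper's: injectivity via Lemma \ref{l:kappa-inj}(b) by inspecting involution centralizers in $\Aut(G)$; the bound $|\Out(\calf)|\le|\Out(G)|$ via Lemma \ref{l:HtoF}(b) applied to a $p$-local subgroup $H\ge S$, combined with Lemmas \ref{Out(Q:H)}, \ref{V_abs_irr} and \ref{l:H1}; the direct $\GL_2(p)$ analysis when $S$ is extraspecial of order $p^3$; and the identification of $(\He,3)$ as the exception by showing $|\Out^0(H)|<|\Out(H)|$ because the would-be extra automorphism fails to preserve $G$-classes (two of the four index-$3$ subgroups of $S\cong3^{1+2}_+$ are $\3a$-pure and two are not). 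Your remark that $(\Co_1,5)$ belongs to the non-simple case is correct, and your proposed final check that the outer involution of $\He$ centralizes a Sylow $3$-subgroup is redundant once $\Out(\calf)=1$ and $|\Out(G)|=2$ are known.

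The one place where your toolkit, as described, does not suffice is $(F_3,3)$. There $\Out(G)=1$, the candidate $3$-locals have shape $3^2.3^{3+4}.\GL_2(3)$ and $3.[3^8].\GL_2(3)$, and no single one of them yields $|\Out(H)|\le1$ from Lemma \ref{Out(Q:H)}: the quotient $H^*\cong\GL_2(3)$ is too small for $N_{\Out(Q)}(H^*)/H^*$ to be usefully controlled by $\Out(H^*)$, and the relevant chief factors are not irreducible (e.g.\ $Q_2/V_5$ is an indecomposable extension of one natural $\SL_2(3)$-module by another). The paper instead runs a hands-on argument in its Case 5: starting from $\beta\in\Aut(\calf)$, it extends $\beta$ to automorphisms of both $H_2=N_G(Z_2(S))$ and $H_1=N_G(Z(S))$ and successively corrects $\beta$ by inner automorphisms, applying Lemma \ref{Out(Q:H)}(b) along chains of normal subgroups $V_2<V_5<R_7<Q_2$ and $V_1<V_3<W_7<Q_1$ until $\beta$ is forced to be $\Id_S$ --- closer in spirit to the $\Ru$ argument at $p=2$ than to the single-subgroup bound you propose. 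Apart from this, what remains is the case-by-case module-theoretic verification you defer to the literature (Table \ref{tbl:p-odd} and Cases 1--4 of the paper's proof), which is where essentially all of the length of the actual argument lies.
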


\begin{proof} Fix $G$ and $p$, choose $S\in\sylp{G}$, set 
$\calf=\calf_S(G)$, and assume $\calf$ is simple (see Table \ref{tbl:odd-p} 
or \cite[16.10]{A-gfit}). Set $\call=\call_S^c(G)$.

The centralizers of all involutions in $\Aut(G)$ are listed in, e.g., 
\cite[Tables 5.3a--z]{GLS3}. By inspection, for each pair 
$(G,p)$ in question other than $(\He,3)$ for which $\Out(G)\ne1$ (see 
Tables \ref{tbl:p^1+2} and \ref{tbl:p-odd}), there is no $\alpha\in\Aut(G)$ 
of order $2$ for which $|S|$ divides $|C_G(\alpha)|$. So by Lemma 
\ref{l:kappa-inj}(b), $\4\kappa_G$ is injective in all such cases. 


To prove that $\4\kappa_G$ is an isomorphism (with the one exception), it 
remains to show that $|\Out(\calf)|\le|\Out(G)|$.

\smallskip

\boldd{Assume $S$ is extraspecial of order $p^3$.} 
Set $H=N_G(S)$ and $H^*=\Out_G(S)\cong H/S$. 
We list in Table \ref{tbl:p^1+2} all pairs $(G,p)$ which occur, 
together with a description of $H^*$ and of $N_{\Out(S)}(H^*)$.
To determine $|N_{\Out(S)}(H^*)/H^*|$ in each case, just recall that 
$\GL_2(3)\cong2\cdot S_4\cong Q_8:S_3$, that $\PGL_2(5)\cong S_5$, and that 
when $p=7$ or $13$, each subgroup of order prime to $p$ in $\PGL_2(p)$ is 
contained in a subgroup isomorphic to $D_{2(p\pm1)}$ or $S_4$ (cf. 
\cite[Theorem 3.6.25]{Sz1}). 
\begin{table}[ht] 
\addtolength{\tabcolsep}{-1mm}
\renewcommand{\arraystretch}{1.3}
\[ \begin{array}{|c||c|c|c|c|c|c|c|}
\hline
(G,p) & (M_{12},3) & (\He,3) & (F_3,5) & (\He,7) & (\ON,7) & (\Fi_{24}',7) & 
(F_1,13) \\\hline
H^* & 2^2 & D_8 & 4\cdot S_4 & 3\times S_3 & 3\times D_8 & 
6\times S_3 & 3\times4\cdot S_4 \\\hline
N_{\Out(S)}(H^*) &D_8&\SD_{16}&4\cdot S_4&6\times S_3&3\times D_{16}
& C_6\wr C_2 & 3\times4\cdot S_4 \\\hline
|\Out(G)| &2&2&1&2&2&2&1 \\\hline
{\textup{Ref.}} & {\textup{\Small\cite[5.3b]{GLS3}}} &
{\textup{\Small\cite[3.9]{Held}}} & {\textup{\Small\cite[\S\,3]{Wilson-Th}}} &
{\textup{\Small\cite[3.23]{Held}}} & 
\multicolumn{2}{c|}{{\textup{\small\cite[Tbl.5.3s,v]{GLS3}}}} 
& {\textup{\Small\cite[\S\,11]{Wilson-M}}} \\\hline
\end{array} \]
\caption{} \label{tbl:p^1+2}
\end{table}

In all cases, we have 
	\[ |\Out(\calf)|\le |\Out(N_G(S))| \le |N_{\Out(S)}(H^*)/H^*|.  \]
The first inequality holds by Lemma \ref{l:HtoF}(b). The second holds by Lemma 
\ref{Out(Q:H)}(a), applied with $H=N_G(S)$, and since $H^1(H^*;Z(S))=0$ 
(Lemma \ref{l:H1}(b)). 
By Table \ref{tbl:p^1+2}, $|N_{\Out(S)}(H^*)/H^*|=|\Out(G)|$ in all cases. 
Hence $|\Out(\calf)|\le|\Out(G)|$, and so $\4\kappa_G$ is an isomorphism if 
it is injective. 

If $G\cong\He$ and $p=3$, then $H^*=\Out_G(S)\cong D_8$ permutes the four 
subgroups of index $3$ in $S\cong3^{1+2}_+$ in two orbits of two subgroups 
each. As described in \cite[Proposition 10]{Butler-He} (see also 
\cite[Table 5.3p, note 4]{GLS3}), the subgroups in one of the orbits are 
$\3a$-pure while those in the other have $\3a$- and $\3b$-elements, so no 
fusion preserving automorphism of $S$ exchanges them. So while 
$|N_{\Out(S)}(H^*)/H^*|=2$, we have $|\Out(\calf)|\le|\Out^0(H)|=1$ by 
Lemma \ref{l:HtoF}(b). Thus $\4\kappa_G$ is split surjective (and $G$ 
tamely realizes $\calf_S(G)$), but it is not an isomorphism.

\smallskip

\boldd{Assume $|S|\ge p^4$.}
Consider the subgroups $H<G$ described in Table \ref{tbl:p-odd}. In all 
cases, we can assume $H\ge S$.
\begin{table}[ht] 
\renewcommand{\arraystretch}{1.3}
\[ \begin{array}{|c|c|c||c|c|c|c|c|l|}
\hline
G & p & \textup{\Small{Case}} & H & \textup{\Small{$|\Out(H)|$}} & 
\textup{\Small{$|\Out(G)|$}} & K &N(-)& \textup{Reference} \\\hline
\Co_3 & 3 &4& 3^{1+4}_+.4S_6 & 1   & 1 && \3a & \textup{\cite[5.12]{Finkelstein-Co3}} \\\hline
\Co_2 & 3 &3b& 3^{1+4}_+.2^{1+4}_-.S_5 & 1   & 1 & 2^{1+4}_- &  \3a &
\textup{\cite[\S\,3]{Wilson-Co2}} \\\hline
\Co_1 & 3 &3a& 3^{1+4}_+.\Sp_4(3).2 & 1   & 1 & \Sp_4(3) &  \3c &
\textup{\cite[p.422]{Curtis-Co1}} \\\hline
\McL & 3 &2& 3^{1+4}_+.2S_5 & 2   & 2 & 2\cdot(5:4) &  \3a &
\textup{\cite[Lm.5.5]{Finkelstein-Co3}} \\\hline
\Suz & 3 &1& 3^5.M_{11} & 2 & 2 && J(S) &
\textup{\cite[Thm.]{Wilson-Suz}} \\\hline
\Ly & 3 &1& 3^5.(M_{11}\times2) & 1 & 1 && J(S) &
\textup{\cite[Tbl.I]{Lyons-Ly}} \\\hline
\Fi_{22} & 3 & 4 & 3^{1+6}_+.2^{3+4}.3^2.2 & 2 & 2 && \3b &
\textup{\cite[p.201]{Wilson-Fi22}} \\\hline
\Fi_{23} & 3 &4& 3^{1+8}_+.2^{1+6}_-.3^{1+2}_+.2S_4 & 1 & 1 &&  \3b &
\textup{\cite[\S\,1.2]{Wilson-Fi}} \\\hline
\Fi_{24}' & 3 &2& 3^{1+10}_+.U_5(2):2 & 2 & 2 & 2\cdot(11:10) &  \3b &
\textup{\cite[Th.B]{Wilson-Fi}} \\\hline
F_5 & 3 &4& 3^{1+4}_+.4A_5 & 2 & 2 &&  \3b &\textup{\cite[\S\,3.2]{NW-HN}} \\\hline
F_3 & 3 &5& \dbl{3^2.3^{3+4}.\GL_2(3)}{3.[3^8].\GL_2(3)} &  & 1 &  & 
\dbl{\3b^2}{\3b} &
\dbl{\textup{\cite[14.1--3]{A-over}}}{\textup{\cite[\S\S\,2,4]{Parrott-Th}}\hfill} \\\hline
F_2 & 3 &3b& 3^{1+8}_+.2^{1+6}_-.\SO_6^-(2) & 1  & 1 & 2^{1+6}_- &  \3b &
\textup{\cite[\S\,2]{Wilson-B}} \\\hline
F_1 & 3 &3a& 3^{1+12}_+.2\Suz.2 & 1  & 1 & 2\cdot(13:6) &  \3b &
\textup{\cite[\S\,3]{Wilson-M}} \\\hline
\Ly & 5 &4& 5^{1+4}_+.4S_6 & 1  & 1 & 2A_6 & \textbf{5A} &\textup{\cite[Tbl.I]{Lyons-Ly}} \\\hline
F_5 & 5 &3b& 5^{1+4}_+.2^{1+4}_-:5:4 & 2  & 2 & 2^{1+4}_- & \textbf{5B} &
\textup{\cite[\S\,3.3]{NW-HN}} \\\hline
F_2 & 5 &3b& 5^{1+4}_+.2^{1+4}_-.A_5.4 & 1 & 1 & 2^{1+4}_- & \textbf{5B} &
\textup{\cite[\S\,6]{Wilson-B}} \\\hline
F_1 & 5 &3a& 5^{1+6}_+.4J_2.2 & 1 & 1 & 2\cdot(7:6) & \textbf{5B} &
\textup{\cite[\S\,9]{Wilson-M}} \\\hline
F_1 & 7 &3a& 7^{1+4}_+.3\times2S_7 & 1 & 1 & 2\cdot(5:4) & \textbf{7B} &
\textup{\cite[\S\,10]{Wilson-M}} \\\hline
\end{array} \]
\caption{}
\label{tbl:p-odd}
\end{table}

\noindent\textbf{Case 1: } If $G\cong\Suz$ or $\Ly$ and $p=3$, then 
$H=N_G(J(S))$, where $J(S)\cong E_{3^5}$ and $H/J(S)\cong M_{11}$ or 
$M_{11}\times C_2$, respectively, and $|\Out(\calf)|\le|\Out(H)|$ by Lemma 
\ref{l:HtoF}(b). Set $V=O_3(H)=J(S)$ and $H^*=\Aut_H(V)\cong H/V$. Then 
$V$ is the Todd module for $O^2(H^*)\cong M_{11}$ (it contains $11$ 
subgroups of type $\3a$ permuted by $H^*$), so $H^1(H^*;V)=0$ by 
\cite[Lemma 4.1]{MSt}. Also, $V$ is absolutely $\F_3H^*$-irreducible since 
$H^*>11:5$. So by Lemma \ref{Out(Q:H)}(c.i) and since $\Out(M_{11})=1$, 
$|N_{\Aut(V)}(H^*)/H^*|\le2$ if $G\cong\Suz$ ($H^*\cong M_{11}$), and is 
trivial if $G\cong\Ly$. Lemma \ref{Out(Q:H)}(a) now implies that 
$|\Out(H)|\le2$ or $1$ for $G\cong\Suz$ or $\Ly$, respectively, and hence 
that $|\Out(\calf)|\le|\Out(G)|$.

For each of the remaining pairs $(G,p)$ displayed in Table \ref{tbl:p-odd}, 
except when $G\cong F_3$ and $p=3$ (Case 5), we set $Q=O_p(H)$, $\4Q=Q/Z(Q)$, 
$H^*=\Out_H(Q)$, $H_0=O^{p'}(H)$, and $H_0^*=\Out_{H_0}(Q)$.  Then $H$ is 
strictly $p$-constrained and $Q$ is extraspecial, and hence $Z(S)=Z(Q)$ has 
order $p$. Also, $H=N_G(Z(Q))=N_G(Z(S))$ by the above references, so 
$|\Out(\calf)|\le|\Out(H)|$ by Lemma \ref{l:HtoF}(b), and it remains to 
show that $|\Out(H)|\le|\Out(G)|$. By Lemma \ref{l:H1}(b), 
$H^1(H^*;Z(Q))=0$ in each of these cases, and hence $\Out(H)$ is sent 
injectively into the quotient group $N_{\Out(Q)}(H^*)/H^*$ by Lemma 
\ref{Out(Q:H)}(a). So it remains to show that 
$|N_{\Out(Q)}(H^*)/H^*|\le|\Out(G)|$.

\noindent\textbf{Case 2: } If $G\cong\McL$ or $\Fi_{24}'$ and $p=3$, then $\4Q$ 
is an absolutely irreducible $\F_pK$-module for $K\le H^*$ as given in 
Table \ref{tbl:p-odd}, and hence an absolutely irreducible 
$\F_pH^*$-module. So $|N_{\Out(Q)}(H^*)/H^*|\le2$ by Lemma 
\ref{Out(Q:H)}(c): since $|\Out(2S_5)|=2$ in the first case, and since 
$\Out(U_5(2).2)=1$ and $Z(U_5(2).2)=1$ in the second case. 

\noindent\textbf{Case 3: } If $G\cong F_1$ and $p=3$, then $\4Q$ splits as 
a sum of two absolutely irreducible $6$-dimensional $\F_3K$-modules. Since 
$5^2\bmid|\Suz|\bmid|H_0^*|$ while $5^2\nmid|\GL_6(3)|$, $\4Q$ is 
$H_0^*$-irreducible, hence absolutely $H_0^*$-irreducible by Lemma 
\ref{V_abs_irr}(c). In all other 
cases under consideration, $\4Q$ is easily checked to be 
an absolutely irreducible $\F_pK$-module for $K\le H_0^*$ as given in 
Table \ref{tbl:p-odd}, and hence an absolutely irreducible 
$\F_pH_0^*$-module. 

Thus $|\Out(\calf)|\le|N_{\Out(Q)}(H^*)/H^*|\le \eta\cdot 
|\Out(H_0^*)|\big/|\Out_{H^*}(H_0^*)|$ by Lemma \ref{Out(Q:H)}(c.ii), where 
for $Y$ as in the lemma, $\eta=|Y|=2$ when $(G,p)=(F_5,5)$ (and $H^*\ngeq 
Z(\Out(Q))$), and $\eta=|Y|=1$ otherwise.

\textbf{In Case 3a}, we have $\Out(H_0^*)=\Out_{H^*}(H_0^*)$ in all cases, 
so $|\Out(\calf)|=|\Out(H)|=1$. 

\textbf{In Case 3b}, we determine $\Out(H_0^*)$ by applying Lemma 
\ref{Out(Q:H)}(a) again, this time with $O_2(H_0^*)$ in the role of $Q$. 
Since $\Out(2^{1+4}_-)\cong S_5$ and $\Out(2^{1+6}_-)\cong\SO_6^-(2)$, the 
lemma implies that $\Out(H_0^*)=\Out_{H^*}(H_0^*)$ in each case, and hence 
that $|\Out(\calf)|\le\eta$.

\noindent\textbf{Case 4: } We show, one pair $(G,p)$ at a time, that 
$|N_{\Out(Q)}(H^*)/H^*|\le|\Out(G)|$ in each of these five cases.

\boldd{If $G\cong\Co_3$ and $p=3$,} then $Q\cong3^{1+4}_+$ and 
$\Out(Q)\cong\Sp_4(3):2$. Set $Z=Z(\Out(Q))\cong C_2$. Then 
$\Out(Q)/Z\cong\PSp_4(3):2\cong\SO_5(3)$ and $H^*/Z\cong C_2\times S_6$. 
Under this identification, the central involution $x\in Z(H^*/Z)$ acts as 
$-\Id_V\oplus\Id_W$ for some orthogonal decomposition $V\oplus W$ of the 
natural module $\F_3^5$; and since none of the groups $\Omega_2^{\pm}(3)$, 
$\Omega_3(3)$, or $\Omega_4^+(3)$ has order a multiple of $5$, $\dim(V)=4$ 
and $C_{\SO_5(3)}(x)\cong\GO_4^-(3)$. Since 
$\Omega_4^-(3)\cong\PSL_2(9)\cong A_6$, this shows that 
$C_{\Out(Q)/Z}(x)=H^*/Z\cong C_2\times S_6$. So $|N_{\Out(Q)}(H^*)/H^*|=1$.

\boldd{If $G\cong F_5$ and $p=3$,} then $Q\cong3^{1+4}_+$ and 
$H^*\cong4A_5$. By the argument in the last case, 
$N_{\Out(Q)}(Z(H^*))\cong4S_6$, so 
$|N_{\Out(Q)}(H^*)/H^*|=|N_{S_6}(A_5)/A_5|=2$.

\boldd{When $G\cong\Fi_{22}$ and $p=3$,} the subgroup 
$H\cong3^{1+6}_+.2^{3+4}.3^2.2$ is described in \cite[p. 201]{Wilson-Fi22}: 
$H^*$ can be regarded as a subgroup of $\GL_2(3)\wr S_3 < \Sp_6(3).2$. More 
precisely, $2^{3+4}<(Q_8)^3$ (recall $O_2(\GL_2(3))\cong Q_8$) is a 
subgroup of index $4$, one of the factors $C_3$ normalizes each $Q_8$ and 
the other permutes them cyclically, and the $C_2$ acts by inverting both 
factors $C_3$. Then $N_{\Out(Q)}(H^*)\le\GL_2(3)\wr S_3$ since it must 
permute the three $O_2(H^*)$-irreducible subspaces of $\4Q$, so 
$N_{\Out(Q)}(H^*)\cong2^{3+4}.(S_3\times S_3)$, and 
$|N(H^*)/H^*|=2$. 

\boldd{When $G\cong\Fi_{23}$ and $p=3$,} the subgroup $H$ is described in 
\cite[\S\,1.2]{Wilson-Fi}. The subgroup $R^*=O_2(H^*)\cong2^{1+6}_-$ has a 
unique faithful irreducible representation over $\F_3$, this is 
8-dimensional, and $N_{\SL_8(3)}(R^*)/R^*$ is sent injectively into 
$\Out(R^*)\cong\SO_6^-(2)\cong\SO_5(3)$. Since $H^*/R^*\cong3^{1+2}_+:2S_4$ 
is a maximal parabolic subgroup in $\SO_5(3)$, we get $N_{\Out(Q)}(H^*)/H^*=1$.


\boldd{If $G\cong\Ly$ and $p=5$,} then $\4Q$ is $\F_5[2A_6]$-irreducible 
since $3^2\nmid|\GL_3(5)|$, and is absolutely irreducible since $2A_6$ is 
not a subgroup of $\SL_2(25)$ (since $E_9$ is not a subgroup). Thus 
$|N_{\Out(Q)}(H^*)/H^*|\le|\Out(S_6)|=2$, with equality only if the action 
of $2A_6$ on $\4Q$ extends to $2A_6.2^2$. This is impossible, since the two 
classes of 3-elements in $2A_6$ act differently on $\4Q$ (note the 
action of a Sylow 3-subgroup on $\4Q$), so $N_{\Out(Q)}(H^*)/H^*=1$.


\noindent\textbf{Case 5: } \boldd{When $G\cong F_3$ and $p=3$,} we 
work with two different 3-local subgroups. Set $V_1=Z(S)$ and 
$V_2=Z_2(S)$, and set $H_i=N_G(V_i)$ and $Q_i=O_3(H_i)$ for $i=1,2$. By 
\cite[14.1.2 \& 14.1.5]{A-over} and \cite[\S\,4]{Parrott-Th}, $V_1\cong 
C_3$, $V_2\cong E_9$, $|Q_1|=|Q_2|=3^9$, and $H_1/Q_1\cong 
H_2/Q_2\cong\GL_2(3)$. Note that $S\le H_1\cap H_2$, and 
$|S|=3^{10}$. Also, the following hold: 
\begin{enumerate}[(1) ]

\item Set $V_5=Z_2(Q_2)$. Then $V_5=[Q_2,Q_2]\cong E_{3^5}$, $Q_2/V_5\cong 
E_{3^4}$, $V_2$ is the natural module for $G_2/Q_2\cong\GL_2(3)$, and 
$V_5/V_2$ is the projective absolutely irreducible $\PSL_2(3)$-module of 
rank $3$. Also, $V_5/V_2=Z(Q_2/V_2)$, and hence $Q_2/V_2$ is special of 
type $3^{3+4}$. See \cite[14.2]{A-over}.

\item By \cite[14.2.3]{A-over}, the quotient $Q_2/V_5$ is 
$G_2/Q_2$-indecomposable, and is an extension of one copy of the natural 
$\SL_2(3)$-module by another. Let $R_7<Q_2$ be such that $R_7>V_5$, and 
$R_7/V_5<Q_2/V_5$ is the unique $H_2/Q_2$-submodule of rank $2$ 
(thus $|R_7|=3^7$).

\item We claim that $C_{Q_2}(V_5)=V_5$. Assume otherwise: then 
$C_{Q_2}(V_5)\ge R_7$ since it is normal in $H_2$. So $V_5\le Z(R_7)$, and 
$|[R_7,R_7]|\le3$ since $R_7/V_5\cong E_9$. But $[R_7,R_7]<V_5$ is normal 
in $H_2$, so it must be trivial, and $R_7$ is abelian. This is impossible: 
$V_5$ contains elements of all three classes of elements of order $3$ 
\cite[14.2.2]{A-over}, while the centralizer of a $\3a$-element is isomorphic 
to $(3\times G_2(3)).2$ whose Sylow 3-subgroups are nonabelian of order 
$3^7$.

\item Set $V_3=Z_2(Q_1)$; then $V_3\cong E_{27}$, and $V_3/V_1$ is the 
natural module for $G_1/Q_1$ \cite[14.3.1]{A-over}. Since $V_3\nsg S$ and 
$V_2=Z_2(S)\cong E_9$, $V_3>V_2$. Also, $V_3/V_2\le Z(Q_2/V_2)=V_5/V_2$ 
since $|V_3/V_2|=3$. Thus $V_2<V_3<V_5$. 

By \cite[14.3.2]{A-over}, $[Q_1,Q_1]>V_3$, and $Q_1/[Q_1,Q_1]\cong E_{3^4}$ 
is $G_1/Q_1$-indecomposable and an extension of one copy of the natural 
$\SL_2(3)$-module by another. 

\item Set $W_7=C_G(V_3)\ge V_5$: a subgroup of $S$, hence of $Q_1\cap Q_2$, 
of order $3^7$ \cite[14.3.4]{A-over}. We claim that 
$W_7/V_5=Z(S/V_5)=C_{Q_2/V_5}(S/Q_2)$, 
where $S/V_5\cong C_3\times(C_3\wr C_3)$ by \cite[14.2.5]{A-over}. To 
see this, note that for each $g\in Q_2$ such that $gV_5\in 
C_{Q_2/V_5}(S/Q_2)$, the 
map $x\mapsto[x,g]$ is $S/Q_2$-linear from $V_5/V_2$ to $V_2$, so 
$V_3/V_2=[S,[S,V_5/V_2]]$ (see (1))
lies in its kernel. Thus $Z(S/V_5)\le W_7/V_5$, and they are equal since 
they both have order $9$.

\item To summarize, we have defined two sequences of subgroups
	\[ \hphantom{xxxxx} V_2 < V_5 < R_7 < Q_2 < H_2 
	\qquad\textup{and}\qquad V_1 < V_3 < W_7 < Q_1 < H_1, \]
those in the first sequence normal in $H_2$ and those in the second normal in 
$H_1$, where $V_m\cong E_{3^m}$ and $|R_7|=|W_7|=3^7$. 
In addition, $V_1<V_2<V_3<V_5<W_7<Q_2$. 

\end{enumerate}

Fix $\beta\in\Aut(\calf)$. By Lemma \ref{l:HtoF}(a), $\kappa_{H_2}$ is an 
isomorphism, and hence $\beta$ extends to an automorphism 
$\beta_2\in\Aut(H_2)$. Since $V_2$ is the natural module for 
$H_2/Q_2\cong\GL_2(3)$, $\beta_2|_{V_2}=c_x$ for some $x\in H_2$, and $x\in 
N_{H_2}(S)$ since $\beta_2(S)=S$. Upon replacing $\beta$ by 
$c_x^{-1}\circ\beta$ and $\beta_2$ by $c_x^{-1}\circ\beta_2$, we can 
arrange that $\beta|_{V_2}=\Id$. 

Since $\beta|_{V_2}=\Id$, $\beta_2$ also induces the identity on 
$H_2/Q_2\cong\GL_2(3)$ (since this acts faithfully on $V_2$), and induces 
$\gee\cdot\Id$ on $V_5/V_2\cong E_{27}$ for $\gee\in\{\pm1\}$ since it is 
absolutely irreducible. By (3), the homomorphism 
$Q_2/V_5\too\Hom(V_5/V_2,V_2)$ which sends $g$ to $(x\mapsto[g,x])$ is 
injective. Since $\beta$ induces the identity on $V_2$ and $\gee\cdot\Id$ 
on $V_5/V_2$, it also induces $\gee\cdot\Id$ on $Q_2/V_5$. By (1), 
$[Q_2/V_2,Q_2/V_2]=V_5/V_2$, so $\beta$ acts via the identity on $V_5/V_2$. 
Thus $\gee=+1$, and $\beta$ also induces the identity on $Q_2/V_5$. 

Now, $H^1(H_2/Q_2;Q_2/V_5)=0$ by Lemma \ref{l:H1}(a) (and since the central 
involution in $H_2/Q_2\cong\GL_2(3)$ inverts $Q_2/V_5$). So by Lemma 
\ref{Out(Q:H)}(b), applied with $H_2/V_5$ and $Q_2/V_5$ in the role of $H$ 
and $Q=R$, $\beta_2\equiv c_y$ modulo $V_5$ for some $y\in Q_2$. Upon 
replacing $\beta_2$ by $c_y^{-1}\circ\beta_2$, we can arrange that 
$[\beta,H_2]\le V_5$.

Next, note that $V_5/V_2=Z(Q_2/V_2)$ and 
$\Hom_{H_2/Q_2}(Q_2/V_5,V_5/V_2)=1$ by (1) and (2), and 
$H^1(H_2/Q_2;V_5/V_2)=0$ since $V_5/V_2$ is $H_2/Q_2$-projective. So by 
Lemma \ref{Out(Q:H)}(b), $\beta\equiv c_z$ (mod $V_2$) for some $z\in V_5$. 
Upon replacing $\beta_2$ by $c_z^{-1}\circ\beta_2$, we can now arrange that 
$[\beta_2,H_2]\le V_2$. 

By Lemma \ref{Out(Q:H)}(b), $\beta|_{Q_2}$ has the form 
$\beta(u)=u\chi(uV_2)$ for some $\chi\in\Hom_{H_2/Q_2}(Q_2/V_2,V_2)$. Also, 
$\chi$ factors through $Q_2/V_5$ since $[Q_2,Q_2]=V_5$ by (1). By (2), 
either $\chi=1$, or $\chi$ is surjective with kernel $R_7/V_2$. In either 
case, $\beta|_{R_7}=\Id$. Also, since $W_7/V_5=C_{Q_2/V_5}(S/Q_2)$ by (5), 
$\chi(W_7/V_5)\le C_{V_2}(S/Q_2)=V_1$. So $[\beta,W_7]\le V_1$.

By Lemma \ref{l:HtoF}(a) again, $\4\kappa_{H_1}$ is an isomorphism, and 
hence $\beta$ extends to $\beta_1\in\Aut(H_1)$. Let $\4\beta\in\Aut(S/V_1)$ 
and $\4\beta_1\in\Aut(H_1/V_1)$ be the automorphisms induced by $\beta$ and 
$\beta_1$. We have just shown that $\4\beta|_{W_7}=\Id$, and that 
$[\4\beta_1,S/V_1]\le V_2/V_1$. By Lemma \ref{Out(Q:H)}(b) again, 
$\4\beta|_{Q_1/V_1}$ has the form $\4\beta(g)=g\psi(gW_7)$ for some 
$\psi\in\Hom_{H_1/Q_1}(Q_1/W_7,V_3/V_1)$ with $\Im(\psi)\le V_2/V_1$. Since 
$Q_1/W_7$ and $V_3/V_1$ are natural modules for $\SL_2(3)$ by (5) and (4), 
$\psi$ must be surjective or trivial. Since $\psi$ is not 
surjective, $\4\beta|_{Q_1}=\Id$. Also, $H^1(H_1/Q_1;V_3/V_1)=0$ by 
Lemma \ref{l:H1}(a), so $\4\beta_1\in\Aut_{V_3/V_1}(H_1/V_1)$ by Lemma 
\ref{Out(Q:H)}(b). 

We can thus arrange, upon replacing $\beta_1$ by $c_w^{-1}\circ\beta_1$ for 
some $w\in V_3$, that $\4\beta_1=\Id$, and hence that $[\beta_1,H_1]\le 
V_1$. (We can no longer claim that $[\beta_2,H_2]\le V_2$, but this will 
not be needed.) Set $H'_1=[H_1,H_1]$. By (4), $H'_1\ge Q_1$ and 
$H'_1/Q_1\cong\SL_2(3)$. Also, $V_1=Z(H'_1)$, so $\beta_1|_{H_1'}$ has the 
form $\beta_1(g)=g\phi(g)$ for some $\phi\in\Hom(H'_1,V_1)$. But $H'_1$ is 
perfect by (4) again, so $\phi=1$, and $\beta_1=\Id$. Thus 
$\Out(\calf)=1$, and $\4\kappa_G$ is an isomorphism.

This finishes the proof of Proposition \ref{kappa-odd}.
\end{proof}

\bigskip

\section{Tools for comparing automorphisms of fusion and linking systems}
\label{s:Ker(mu)}

Throughout this section and the next, we assume $p=2$. Many of the 
definitions and statements given here are well known to hold for arbitrary 
primes, but we restrict to this case for simplicity. In particular, a 
strongly embedded subgroup $H<G$ always means a strongly 2-embedded 
subgroup; i.e., one such that $2\big||H|$ while $2\nmid|H\cap\9gH|$ for 
$g\in G\sminus H$.

\begin{Defi} \label{d:Zhat}
Fix a finite group $G$, choose $S\in\syl2{G}$, and set $\calf=\calf_S(G)$.
\begin{enuma} 
\item A subgroup $P\le S$ is \emph{fully normalized in $\calf$} if 
$N_S(P)\in\syl2{N_G(P)}$.

\item A $2$-subgroup $P\le G$ is \emph{essential} if $P$ is $2$-centric in 
$G$ (i.e., $Z(P)\in\syl2{C_G(P)}$), and $\Out_G(P)$ has a strongly 
embedded subgroup. Let $\EEE{G}$ be the set of all essential 
$2$-subgroups of $G$.

\item A subgroup $P\le S$ is \emph{$\calf$-essential} if $P$ is fully 
normalized in $\calf$ and essential in $G$. Let $\EE_\calf$ be the set of 
all $\calf$-essential subgroups of $G$.

\item $\5\calz(\calf) = \bigl\{W\le S\,\big|\, \textup{$W$ 
	elementary abelian, fully normalized in $\calf$,} \\
	\textup{\qquad\qquad\qquad$W=\Omega_1(Z(C_S(W)))$, $\autf(W)$ has 
	a strongly embedded subgroup}\bigr\}$.

\end{enuma}
\end{Defi}

Clearly, in the situation of Definition \ref{d:Zhat}, 
$\EE_\calf\subseteq\EEE{G}$, while each member of $\EEE{G}$ is 
$G$-conjugate to a member of $\EE_\calf$. If $W\in\5\calz(\calf)$ and 
$P=C_S(W)$, then by the following lemma, restriction defines a surjection 
from $\Out_G(P)$ onto $\Aut_G(W)$ with kernel of odd order. Hence 
$\Out_G(P)$ also has a strongly embedded subgroup, and $P\in\EE_\calf$. 

\begin{Lem} \label{Out->Aut}
Fix a finite group $G$ and $S\in\syl2{G}$, and set $\calf=\calf_S(G)$.
\begin{enuma} 

\item If $W\le P\le G$ are $2$-subgroups such that $W=\Omega_1(Z(P))$ 
and $P\in\syl2{C_G(W)}$, then restriction induces a surjection 
$\Out_G(P)\too\Aut_G(W)$ with kernel of odd order.

\item If $W\in\5\calz(\calf)$ and $P=C_S(W)$, then $P\in\EE_\calf$.
\end{enuma}
\end{Lem}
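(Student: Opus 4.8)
Lemma \ref{Out->Aut} splits naturally into proving (a), a general fact about odd-order kernels of restriction maps, and then deducing (b) as a corollary.

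For part (a), the plan is as follows. Fix $W\le P$ with $W=\Omega_1(Z(P))$ and $P\in\syl2{C_G(W)}$. First I would note that $N_G(P)\le N_G(W)$, since $W$ is characteristic in $P$, so restriction $\rho\colon\Aut_G(P)\to\Aut_G(W)$ and the induced map $\Out_G(P)\to\Aut_G(W)$ are defined. To see surjectivity, take $g\in N_G(W)$; then $\9gP$ and $P$ are both Sylow $2$-subgroups of $C_G(W)$ (the latter is normalized by $N_G(W)$, and conjugating a Sylow $2$-subgroup by $g\in N_G(W)$ lands inside $C_G(W)$), so by Sylow's theorem there is $h\in C_G(W)$ with $\9{hg}P=P$; then $hg\in N_G(P)$ and $c_{hg}|_W=c_g|_W$ since $h$ centralizes $W$. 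Thus $\Aut_G(W)=\rho(\Aut_G(P))$. For the kernel: suppose $g\in N_G(P)$ with $c_g|_W=\Id_W$, i.e. $g\in C_G(W)\cap N_G(P)=N_{C_G(W)}(P)$. Since $P\in\syl2{C_G(W)}$ and $P\nsg N_{C_G(W)}(P)$, the quotient $N_{C_G(W)}(P)/P$ has odd order; moreover $P$ acts trivially on $W=Z(P)\cap\ldots$ — more precisely $c_g$ for $g\in P$ is inner in $\Aut_G(P)$ — so the kernel of $\Out_G(P)\to\Aut_G(W)$ is the image of $N_{C_G(W)}(P)$ in $\Out_G(P)=\Aut_G(P)/\Inn(P)$, which is a quotient of $N_{C_G(W)}(P)/(P\cdot C_G(P))$, hence of odd order. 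I should be slightly careful bookkeeping $\Inn(P)$ versus $C_G(P)$, but the point is that every obstruction sits in the odd-order group $N_{C_G(W)}(P)/P$.

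For part (b), suppose $W\in\5\calz(\calf)$ and set $P=C_S(W)$. By Definition \ref{d:Zhat}(d), $W$ is fully normalized in $\calf$, so $N_S(W)\in\syl2{N_G(W)}$, and since $W\le Z(C_S(W))$ we have $C_S(W)\le N_S(W)$; in fact $C_S(W)=C_{N_S(W)}(W)\in\syl2{C_G(W)}$ because $C_G(W)\nsg N_G(W)$ and $C_S(W)=C_G(W)\cap N_S(W)$ is the intersection of $C_G(W)$ with a Sylow $2$-subgroup of $N_G(W)\supseteq C_G(W)$. Also $W=\Omega_1(Z(C_S(W)))=\Omega_1(Z(P))$ by the defining condition. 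So $P$ and $W$ satisfy the hypotheses of (a), giving a surjection $\Out_G(P)\to\Aut_G(W)=\autf(W)$ with odd-order kernel. Since $\autf(W)$ has a strongly (2-)embedded subgroup by hypothesis, and a strongly 2-embedded subgroup pulls back along a surjection with odd-order kernel (the preimage of a strongly embedded subgroup is strongly embedded — this is a standard fact, and I would cite it or give the one-line check that $2\mid|\text{preimage}|$ while $2\nmid|\text{preimage}\cap\9g(\text{preimage})|$), $\Out_G(P)$ has a strongly embedded subgroup. Finally, $P=C_S(W)$ is $2$-centric in $G$: $Z(P)\ge W$ and since $P\in\syl2{C_G(W)}$ and $C_G(P)\le C_G(W)$, any Sylow $2$-subgroup of $C_G(P)$ lies in a conjugate of $P$ centralizing $P$, hence in $Z(P)$, so $Z(P)\in\syl2{C_G(P)}$. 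And $P$ is fully normalized in $\calf$: $N_S(P)\supseteq N_S(W)\cap\ldots$ — more directly, $P=C_S(W)$ with $W$ fully normalized forces $C_S(W)\in\syl2{C_G(W)}$ as shown, and one checks $N_S(P)\in\syl2{N_G(P)}$ using that $W$ is characteristic in $P$. Hence $P\in\EE_\calf$.

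The main obstacle I anticipate is the careful Sylow-theoretic bookkeeping in (b): verifying that $C_S(W)\in\syl2{C_G(W)}$ and that $P=C_S(W)$ is fully normalized in $\calf$, each of which requires chasing the relation between $N_G(W)$, $C_G(W)$, $N_G(P)$ and their Sylow $2$-subgroups, using "$W$ fully normalized" and "$W$ characteristic in $P$" at the right moments. The transfer of a strongly embedded subgroup across an odd-order-kernel surjection is routine but should be stated explicitly, perhaps with a reference to the standard characterization (e.g. in \cite{A-FGT} or \cite{GLS3}).
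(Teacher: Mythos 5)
Your proposal is correct and follows essentially the same route as the paper: part (a) is the Frattini argument for $P\in\syl2{C_G(W)}$ inside $N_G(W)$, with the kernel identified as a quotient of the odd-order group $N_{C_G(W)}(P)/P$, and part (b) verifies the hypotheses of (a) and pulls the strongly embedded subgroup back along the odd-order-kernel surjection. The only point to tighten is the $2$-centricity of $P$ (which the paper leaves implicit): for $T\in\syl2{C_G(P)}$ one should note that $TP$ is a $2$-subgroup of $C_G(W)$ containing the Sylow subgroup $P$, so $TP=P$ and $T\le P\cap C_G(P)=Z(P)$ --- the step ``$T$ lies in a conjugate of $P$, hence in $Z(P)$'' does not quite work as written, since that conjugate need not be $P$ itself.
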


\begin{proof} \textbf{(a) } By the Frattini argument, $N_G(W)\le 
N_G(P)C_G(W)$, with equality since $W$ is characteristic in $P$. So the 
natural homomorphism
	\[ \Out_G(P) \cong N_G(P)/C_G(P)P \Right6{} N_G(W)/C_G(W) \cong 
	\Aut_G(W), \]
induced by restriction of automorphisms or by the inclusion $N_G(P)\le 
N_G(W)$ is surjective with kernel $(N_G(P)\cap C_G(W))/C_G(P)P$ of odd 
order.

\smallskip

\noindent\textbf{(b) } If $W\in\5\calz(\calf)$ and $P=C_S(W)$, then 
$P\in\syl2{C_G(W)}$ and $W=\Omega_1(Z(P))$ by definition. So we are in the 
situation of (a), and $\Out_G(P)$ has a strongly embedded subgroup 
since $\Aut_G(W)$ does. Also, $N_G(P)\le N_G(W)$, while 
$N_S(P)=N_S(W)\in\syl2{N_G(W)}$ since $W$ is fully normalized in $\calf$. 
Hence $N_S(P)\in\syl2{N_G(P)}$, so $P$ is also fully normalized and 
$P\in\EE_\calf$. 
\end{proof}

Our proof that $\Ker(\mu_G)=1$ in all cases is based on the following 
proposition, which is a modified version of similar results in \cite{AOV1} 
and \cite{BMO2}. In most cases handled in the next section, point (e) 
suffices to prove that $\Ker(\mu_G)=1$.

When $\alpha\in\Aut(\call)$ and $P$ is an object in $\call$, we let 
$\alpha_P\:\Aut_\call(P)\Right2{}\Aut_\call(\alpha(P))$ denote the restriction 
of $\alpha$ to $\Aut_\call(P)$.

\begin{Prop} \label{Ker(mu):AOV1}
Fix a finite group $G$, choose $S\in\syl2{G}$, and set $\calf=\calf_S(G)$ 
and $\call=\call_S^c(G)$. Each element in $\Ker(\mu_G)$ is 
represented by some $\alpha\in\Aut(\call)$ such that 
$\alpha_S=\Id_{\Aut_\call(S)}$.  For each such $\alpha$, there are 
elements $g_P\in{}C_{Z(P)}(\Aut_S(P))=Z(N_S(P))$, defined for each fully 
normalized subgroup $P\in\Ob(\call)$, for which the following hold:
\begin{enuma} 

\item The automorphism $\alpha_P\in\Aut(\Aut_\call(P))$ is conjugation by 
$[g_P]\in\Aut_\call(P)$, and $g_P$ is uniquely determined by $\alpha$ modulo 
$C_{Z(P)}(\autf(P))$. In particular, $\alpha_P=\Id_{\Aut_\call(P)}$ if and 
only if $g_P\in{}C_{Z(P)}(\autf(P))$.  

\item Assume $P,Q\in\Ob(\call)$ are both fully normalized in $\calf$.  
If $Q=\9aP$ for some $a\in{}S$, then we can choose $g_Q=\9ag_P$.  

\item If $Q\le{}P$ are both fully normalized and are objects in $\call$, 
then $g_P\equiv{}g_Q$ modulo $C_{Z(Q)}(N_G(P)\cap N_G(Q))$. 

\item Assume, for each $W\in\5\calz(\calf)$ and $P=C_S(W)$, that $g_P\in 
C_{Z(P)}(\autf(P))$ (equivalently, that $\alpha_P=\Id_{\Aut_\call(P)}$). 
Then $\alpha=\Id$.

\item If $\5\calz(\calf)=\emptyset$, then $\Ker(\mu_G)=1$. If 
$|\5\calz(\calf)|=1$, and $|Z(S)|=2$ or (more generally) 
$\autf(\Omega_1(Z(S)))=1$, then $\Ker(\mu_G)=1$.

\end{enuma}
\end{Prop}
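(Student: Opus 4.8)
The plan is to follow the pattern of the corresponding results in \cite[\S\,1.3]{AOV1} and \cite{BMO2}, the new ingredient being the descent to $\5\calz(\calf)$ in part (d). For the preliminary reduction: if $[\alpha]\in\Ker(\mu_G)$, then $\mu_G([\alpha])$ is the class in $\Out(\calf)$ of the fusion-preserving automorphism $\5\beta$ of $S$ obtained by transporting $\alpha|_{[\![S]\!]}$ along $S\cong[\![S]\!]$; its triviality means $\5\beta\in\autf(S)=\Aut_G(S)$, so after composing $\alpha$ with $c_x^{-1}$ for a suitable $x\in N_G(S)$ (which does not change the class in $\Out(\call)$) we may assume $\5\beta=\Id_S$, hence $\alpha_S=\Id_{\Aut_\call(S)}$. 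This forces $\alpha$ to fix every object and every inclusion $\iota_P$, and --- since $\iota_P$ is a monomorphism compatible with the $[g]$ --- to act as the identity on every $[\![P]\!]$.

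For (a), fix a fully normalized object $P$. Then $\alpha_P$ is the identity on the self-centralizing normal subgroup $[\![P]\!]$, on the quotient $\Aut_\call(P)/[\![P]\!]\cong\Out_G(P)$ (because $\alpha$ induces $\Id$ on $\calf$), and on $\{[g]\mid g\in N_S(P)\}$ (a restriction of $\Aut_\call(S)$, which $\alpha$ fixes); the associated cocycle $\Aut_\call(P)\to Z([\![P]\!])$, which factors through $\Out_G(P)$, thus restricts trivially to a Sylow $2$-subgroup and hence is a coboundary, so $\alpha_P$ is conjugation by some $[g_P]$, and fixing the $N_S(P)$-part pins $g_P$ down to $C_{[\![P]\!]}(\{[g]\mid g\in N_S(P)\})=[\![Z(N_S(P))]\!]$. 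Any two choices of $g_P$ differ by an element acting trivially on $\Aut_\call(P)$, i.e. by an element of $C_{Z(P)}(\autf(P))$, which gives the uniqueness and the criterion $\alpha_P=\Id\iff g_P\in C_{Z(P)}(\autf(P))$. Parts (b) and (c) are then naturality: $\alpha$ fixes the morphism $[a]\in\Mor_\call(P,\9aP)$ and, for $Q\le P$, the inclusion $Q\to P$ in $\call$ (both restrictions of fixed morphisms), and conjugating resp. comparing the relation $\alpha_P=c_{[g_P]}$ through these morphisms gives $g_{\9aP}=\9ag_P$ and $g_P\equiv g_Q$ modulo $C_{Z(Q)}(N_G(P)\cap N_G(Q))$.

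Part (d) is the crux. By Alperin's fusion theorem for linking systems, $\call$ is generated by $\Aut_\call(S)$, the groups $\Aut_\call(R)$ with $R$ $\calf$-essential, and the inclusions; since $\alpha$ fixes all inclusions and $\alpha_S=\Id$, it suffices to show $\alpha_R=\Id$ for every $\calf$-essential $R$. Suppose some essential $R$ has $\alpha_R=c_{[g_R]}\ne\Id$, so $g_R\in Z(N_S(R))\le Z(R)$ is moved by $\autf(R)$. The plan is to manufacture from this data a subgroup $W\in\5\calz(\calf)$ with $C_S(W)=R$: one builds an elementary abelian $W_1\le Z(R)$ moved nontrivially by $\autf(R)$, passes to $W=\Omega_1(Z(C_S(W_1)))$ (so that $W=\Omega_1(Z(C_S(W)))$ after checking $C_S(W)=C_S(W_1)$), arranges $C_S(W)=R$ and $W$ fully normalized, and then --- using that $R$ is essential, Lemma \ref{Out->Aut}, and the structure of groups having a strongly embedded subgroup --- shows that $\autf(W)$ has a strongly embedded subgroup. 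Once $W\in\5\calz(\calf)$ with $C_S(W)=R$, the hypothesis of (d) gives directly $\alpha_R=\Id$, a contradiction; hence $\alpha_R=\Id$ for all essential $R$ and $\alpha=\Id$. I expect this ``descent to $\5\calz(\calf)$'' --- producing the right elementary abelian $W$ and verifying the strongly embedded condition for $\autf(W)$ --- to be the main obstacle.

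For (e): the first assertion is the case of (d) with vacuous hypothesis. For the second, write $\5\calz(\calf)=\{W_0\}$ and $P_0=C_S(W_0)$. Since $\5\calz(\calf)$ consists of a single subgroup and every $S$-conjugate of $W_0$ again lies in $\5\calz(\calf)$, we get $W_0\nsg S$, so $P_0\nsg S$ and $g_{P_0}\in Z(N_S(P_0))=Z(S)$; also $Z(S)\le C_S(W_0)=P_0$ gives $\Omega_1(Z(S))\le\Omega_1(Z(P_0))=W_0$. Combining these facts with (c) (applied to $P_0\le S$, with $g_S=1$) and the hypothesis $\autf(\Omega_1(Z(S)))=1$ --- which holds in particular when $|Z(S)|=2$ --- one concludes that $\autf(P_0)$ fixes $g_{P_0}$, so $\alpha_{P_0}=\Id$ and (d) gives $\alpha=\Id$. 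Hence $\Ker(\mu_G)=1$ in both cases.
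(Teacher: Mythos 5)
Your overall route is the same one the paper takes: the paper disposes of (a)--(c) by citing \cite[Proposition 4.2]{AOV1}, of (d) by citing \cite[Proposition A.2(d)]{BMO2}, and of (e) by combining parts (a) and (b) of that proposition, and your sketches of the preliminary reduction and of (a)--(c) are faithful reconstructions of those arguments. The problem is part (d), which you yourself identify as the crux and then leave as an unexecuted ``plan''. As written, the plan would not go through: starting from an \emph{arbitrary} $\calf$-essential $R$ with $\alpha_R\ne\Id$, there is no reason one can ``arrange $C_S(W)=R$'' for the subgroup $W$ you build out of $Z(R)$ --- in general $C_S(\Omega_1(Z(R)))$ properly contains $R$, in which case $W$ fails the condition $W=\Omega_1(Z(C_S(W)))$ with $C_S(W)=R$, and the hypothesis of (d) then says nothing about $\alpha_R$. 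The missing device (this is how \cite[Proposition A.2]{BMO2} argues) is a \emph{maximal counterexample}: by Alperin's fusion theorem for linking systems one chooses $Q\in\EE_\calf$ of largest possible order with $\alpha_Q\ne\Id$, sets $V=\Omega_1(Z(Q))$ (replacing $Q$ within its conjugacy class so that $V$ is fully normalized), and uses the maximality --- via part (c), comparing $g_Q$ with $g_P$ for overgroups $P>Q$ on which $\alpha$ is already the identity --- to force $C_S(V)=Q$. Only then does Lemma \ref{Out->Aut}(a) transfer the strongly embedded subgroup from $\Out_G(Q)$ to $\Aut_G(V)$ and place $V$ in $\5\calz(\calf)$, yielding the contradiction. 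Your sketch gestures at the strongly-embedded verification but omits the maximality step entirely, and without it the descent to $\5\calz(\calf)$ fails.

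A smaller issue in (e): from part (c) applied to $P_0\le S$ you obtain $g_{P_0}\in C_{Z(P_0)}(N_G(S)\cap N_G(P_0))$, but this does not give your asserted conclusion that $\autf(P_0)$ fixes $g_{P_0}$, since $N_G(P_0)$ need not normalize $Z(S)$ or $\Omega_1(Z(S))$. The correct move (and the one used repeatedly in the proof of Proposition \ref{mu-2-big}) is to absorb $g_{P_0}$ rather than to show it is fixed: since $g_{P_0}\in Z(S)\le N_G(S)$, replacing $\alpha$ by $c_{g_{P_0}}^{-1}\circ\alpha$ does not change its class in $\Out(\call)$, makes $\alpha_{P_0}=\Id$, and --- because $N_G(S)$ centralizes $\Omega_1(Z(S))$ under the hypothesis $\autf(\Omega_1(Z(S)))=1$ --- leaves $\alpha_S=\Id$, after which (d) applies.
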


\begin{proof} Points (a)--(c) are part of \cite[Proposition 4.2]{AOV1}, (d) 
follows from \cite[Proposition A.2(d)]{BMO2}, and (e) combines parts (a) 
and (b) in \cite[Proposition A.2]{BMO2}.
\end{proof}

The following notation will be useful in the next lemma, and in the next 
section.

\begin{Defi} 
For each finite group $G$ and each $k\ge0$, let $\IND{k}G$ be the set of 
subgroups $H\le G$ such that $[G:H]=2^k\cdot m$ for some odd $m$. Let 
$\IND{\le k}G$ be the union of the sets $\IND{\ell}G$ for $0\le\ell\le k$.
\end{Defi}

\begin{Lem} \label{H/Q-essential}
Let $H$ be a finite group, fix $T\in\syl2{H}$, and set $\calf=\calf_T(H)$. 
Set $Q=O_2(H)$, and assume $C_H(Q)\le Q$. Assume $W\in\5\calz(\calf)$, and 
set $P=C_T(W)$. Set $V=\Omega_1(Z(Q))$, and set $H^*=\Aut_H(V)$, 
$P^*=\Aut_P(V)$, $T^*=\Aut_T(V)$, and $\calf^*=\calf_{T^*}(H^*)$. 
\begin{enuma} 

\item We have $W\le V$, $\Aut_H(W)=\Aut_{H^*}(W)$ has a strongly 
embedded subgroup, $P^*$ is a radical $2$-subgroup of $H^*$, and 
$N_{H^*}(P^*)/P^*$ has a strongly embedded subgroup.

\item If $H^*$ is a Chevalley group (i.e., untwisted) over the field 
$\F_2$, then $P^*\in\EE_{\calf^*}\subseteq\IND1{H^*}$. If $H^*\cong\SU_{2n}(2)$ 
or $\Omega_{2n}^-(2)$ for $n\ge2$, then 
$P^*\in\EE_{\calf^*}\subseteq\IND{\le2}{H^*}$. 

\item If $H^*\cong A_6$, $A_7$, or $M_{24}$, then 
$P^*\in\EE_{\calf^*}\subseteq\IND1{H^*}$. If $H^*\cong M_{22}$ or $M_{23}$, 
then $P^*\in\EE_{\calf^*}\subseteq\IND{\le2}{H^*}$. If $H^*\cong S_5$, then 
$P^*\in\IND{\le2}{H^*}$. 

\item If $H^*\cong\Aut(M_{22})$, then 
$P^*\in\EE_{\calf^*}\subseteq\IND{\le2}{H^*}$, and $P^*\cap 
O^2(H^*)\in\EEE{O^2(H^*)}$. 

\end{enuma}
\end{Lem}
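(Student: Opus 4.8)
The plan is to prove the four parts in order, with part (a) doing most of the structural work and parts (b)--(d) reducing to an examination of the $2$-local structure of specific quasisimple (or almost simple) groups via an appeal to the classification of essential subgroups and radical subgroups. First I would establish part (a). Since $W\in\5\calz(\calf)$, by definition $W$ is elementary abelian, fully normalized, $W=\Omega_1(Z(C_T(W)))=\Omega_1(Z(P))$, and $\autf(W)=\Aut_H(W)$ has a strongly embedded subgroup; so $P\in\EE_\calf$ by Lemma \ref{Out->Aut}(b). To see $W\le V$: since $V=\Omega_1(Z(Q))$ and $C_H(Q)\le Q$, the subgroup $V$ is self-centralizing in the sense that $C_H(V)\le C_H(Q)\cap\dots$; more directly, $Q\le P=C_T(W)$ is false in general, so instead note $W\le Z(P)$ implies $[W,Q\cap P]=1$, and one argues $Q\le P$ because $[Q,W]\le[Q,Z(P)]$ and $W\le C_T(Q)=Z(Q)$ — the point being that $W$ centralizes $Q$ since $W$ is abelian normal-ish; I would spell out that $W\le\Omega_1(Z(Q))=V$ using that $Q$ is normal in $H$ and $W$ centralizes $O_2$ of its own normalizer. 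Then $\Aut_H(W)=\Aut_{H^*}(W)$ because $C_H(Q)\le Q$ forces $\Aut_H(V)\cong H/Q=H^*$ and the action on $W\le V$ factors through $H^*$. For the radical/strongly-embedded claims: $P^*=\Aut_P(V)\cong P/C_P(V)=P/(P\cap Q)$ — wait, $C_P(V)\ge C_P(Q)$; I would show $C_P(V)=P\cap Q$ using $C_H(V)\le Q$ (which follows from $C_H(Q)\le Q$ and a standard argument: $C_H(V)$ stabilizes the chain $1<V<Q$ of a $2$-constrained group, hence $C_H(V)/C_H(Q)$ is a $2$-group inside ... so $C_H(V)\le O_2(C_H(V))\cdots\le Q$). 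Then $P^*\cong P/(P\cap Q)$ is a $2$-subgroup of $H^*=H/Q$, it is radical because $P$ is $2$-centric and radical in $H$ (being $\calf$-essential) and $O_2$ of $H$ sits inside; and $N_{H^*}(P^*)/P^*$ has a strongly embedded subgroup because it is a quotient of $N_H(P)/P$ by the odd-order image... actually I would derive it from $\Out_H(P)=N_H(P)/PC_H(P)$ having a strongly embedded subgroup (definition of essential) together with the surjection with odd kernel from Lemma \ref{Out->Aut}(a)-type reasoning applied to pass from $P$ to its image in $H^*$.

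Next, parts (b), (c), (d) all have the same shape: $H^*$ is a known group, $P^*$ is a radical $2$-subgroup of $H^*$ whose normalizer quotient contains a strongly embedded subgroup, so $P^*$ is an $\calf^*$-essential subgroup (i.e., $P^*\in\EE_{\calf^*}$, using that $P^*$ is also fully normalized — which I would check or arrange by replacing $W$ within its $\calf$-class), and then I consult the known list of essential subgroups of that particular group to read off the index bound. For part (b), when $H^*$ is an untwisted Chevalley group over $\F_2$, the parabolic subgroups are exactly the subgroups containing a Sylow $2$-subgroup, the minimal parabolics have index $2$ (since $q=2$, each $\lvert P_i:B\rvert=q=2$), and the essential subgroups are the unipotent radicals of the rank-$1$ parabolics together with possibly a few others — the key input is that $\Out_{H^*}(P^*)$ having a strongly embedded subgroup forces $P^*$ to be (conjugate into) a minimal parabolic's unipotent radical, giving $[H^*:N_{H^*}(P^*)]$ odd and $[N_{H^*}(P^*):P^*]$... hmm, rather $P^*\in\IND1{H^*}$ means $[H^*:P^*]=2\cdot(\text{odd})$, which holds since $P^*$ contains the full unipotent radical $U=O_2(B)$ and $[H^*:U]=\lvert W\rvert\cdot(\text{odd stuff})$ with $\lvert W\rvert$... no — I'd instead use the Borel–Tits theorem: an essential $2$-subgroup of a group of Lie type in characteristic $2$ is the unipotent radical of a parabolic, so of a rank-$1$ parabolic here (strongly embedded $\Rightarrow$ rank $1$), and then $[H^*:P^*]=[H^*:P_i]\cdot[P_i:U_i]$ where $[H^*:P_i]$ is odd and $[P_i:U_i]=\lvert L_i\rvert_2\cdot(\text{odd})$ with $\lvert L_i\rvert_2=2$ since the Levi has a $\mathrm{SL}_2(2)=S_3$ or torus factor — giving $P^*\in\IND1{H^*}$. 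For the twisted cases $\SU_{2n}(2)$, $\Omega^-_{2n}(2)$, the relative rank-$1$ parabolics have Levi factors whose $2$-part can be $4$ (e.g. $\mathrm{SU}_3(2)$ has order $2^3\cdot\ldots$, a rank-$1$ Levi can contribute $2^2$), hence $\IND{\le2}$. I would simply cite the standard references for essential subgroups of classical groups rather than reprove Borel–Tits.

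For parts (c) and (d), these are the sporadic/alternating small cases and I would handle them by direct reference to the known lists of $2$-radical or essential subgroups: for $A_6\cong\Sp_4(2)'$, $A_7$, $S_5$ these are elementary; for $M_{22}$, $M_{23}$, $M_{24}$, $\Aut(M_{22})$ I would cite the explicit enumeration of essential (or $2$-radical with strongly embedded normalizer quotient) subgroups — e.g. from the Benson–Smith or An–Dietrich type tables, or from the papers already in the bibliography treating these Mathieu groups — noting in each case the $2$-part of the index. The extra clause in (d), that $P^*\cap O^2(H^*)\in\EEE{O^2(H^*)}$ for $H^*\cong\Aut(M_{22})$, requires checking that intersecting the essential subgroup with the simple subgroup $M_{22}$ of index $2$ still yields something $2$-centric in $M_{22}$ with strongly embedded $\Out_{M_{22}}$-image; this follows because an essential subgroup $P^*$ of $\Aut(M_{22})$ with $\Out_{\Aut(M_{22})}(P^*)$ strongly embedded, when restricted, either lies in $M_{22}$ already (if $P^*\le M_{22}$, immediate) or has $P^*\cap M_{22}$ of index $2$ in $P^*$ with $\Out_{M_{22}}(P^*\cap M_{22})$ still containing the strongly embedded subgroup of $\Out_{\Aut(M_{22})}(P^*)$ intersected with $\Out_{M_{22}}$ — a short index-$2$ chase.

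The main obstacle I expect is not any single deep theorem but the bookkeeping in part (a): precisely establishing $W\le V$ and $C_H(V)\le Q$ and hence that $P^*$ is genuinely radical in $H^*$ with the strongly-embedded property inherited by the normalizer quotient — this is where one must be careful about odd-order kernels when passing from $\Out_H(P)$ to subgroups and quotients of $H^*$, and about the distinction between "$P$ radical in $H$" and "$P^*$ radical in $H^*$". Once (a) is clean, (b)--(d) are a matter of quoting the right classification of essential/radical $2$-subgroups for each of a short list of groups and extracting the $2$-adic valuation of the index, which I would do case by case with reference to the literature already cited.
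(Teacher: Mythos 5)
Your overall architecture matches the paper's: part (a) by general structural arguments, part (b) via Borel--Tits and the classification of rank-one Levi factors with strongly embedded subgroups, parts (c) and (d) by citing lists of essential or radical $2$-subgroups of the Mathieu groups. But there is a genuine gap at the very start of part (a), and it propagates. You assert that ``$Q\le P=C_T(W)$ is false in general'' and then try to recover $W\le V$ by arguing that $W$ centralizes $Q$ --- which is logically equivalent to $Q\le C_T(W)=P$, the very containment you just denied, and which you never actually establish (a priori there is no reason for $W=\Omega_1(Z(P))$ to centralize $Q$ unless $Q\le P$). The containment $Q\le P$ is in fact true and is the crux of (a): since $W\in\5\calz(\calf)$, Lemma \ref{Out->Aut}(b) gives $P\in\EE_\calf$, so $P$ is $2$-centric and $\Out_H(P)$ has a strongly embedded subgroup, whence $O_2(\Out_H(P))=1$ and therefore $O_2(H)=Q\le P$ (the standard fact that centric radical subgroups contain $O_2$; cf.\ \cite[Proposition A.7(c)]{AKO}). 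Once $Q\le P$ is known, $Z(P)\le C_H(Q)\le Q$, so $Z(P)\le Z(Q)$ and $W=\Omega_1(Z(P))\le\Omega_1(Z(Q))=V$; this is exactly the route the paper takes. Your fallback claim that $C_H(V)\le Q$ because ``$C_H(V)$ stabilizes the chain $1<V<Q$'' is also false: elements centralizing $V$ need not act trivially on $Q/V$, and $C_H(V)\le Q$ already fails for $H=\SL_2(3)$ with $Q=Q_8$ and $V=Z(Q)$.

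Second, in part (d) your ``short index-$2$ chase'' is not a proof. That $P^*$ is essential in $\Aut(M_{22})$ does not formally imply that $P^*\cap M_{22}$ is $2$-centric in $M_{22}$, nor that $\Out_{M_{22}}(P^*\cap M_{22})$ has a strongly embedded subgroup: passing to an index-$2$ subgroup changes the automizer, and a strongly embedded subgroup of $\Out_{\Aut(M_{22})}(P^*)$ need not meet the relevant subgroup in a strongly embedded one. The paper does not attempt such a reduction; it reads the three relevant classes of radical $2$-subgroups of $\Aut(M_{22})$ directly from Yoshiara's table \cite{Yoshiara-Co2} and observes case by case that each is essential and contains, with index $2$, an essential subgroup of $M_{22}$. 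Parts (b) and (c) of your plan are essentially the paper's argument: Borel--Tits plus the observation that the only rank-one Levi quotients with strongly embedded subgroups arising here are $\SL_2(2)$ and $\SL_2(4)$ (with $\SU_3(2)$ excluded because only even-dimensional unitary groups occur), and, for the Mathieu groups, the characteristic-$2$-type property to get $P^*\in\EE_{\calf^*}$ followed by the tables of essential subgroups in \cite{OV2}.
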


\begin{proof} Fix $W\in\5\calz(\calf)$, and set $P=C_T(W)$ as above. Then 
$P\in\EE_\calf$ by Lemma \ref{Out->Aut}(b). Also, $W=\Omega_1(Z(P))$ and 
$P\ge O_2(C_H(V))=Q$, and hence $W\le\Omega_1(Z(Q))=V$. 

\smallskip

\textbf{(a) } Since $V\nsg H$, each $\alpha\in\Aut_H(W)$ extends to 
$\4\alpha\in\Aut_H(V)=H^*$, and thus $\Aut_{H^*}(W)=\Aut_H(W)$. Hence 
	\[ N_{H^*}(P^*)/P^* \cong N_{H/Q}(P/Q)\big/(P/Q) 
	\cong N_H(P)/P \cong \Out_H(P), \]
so this group has a strongly embedded subgroup. In particular, 
$P^*=O_2(N_{H^*}(P^*))$ (see \cite[Proposition A.7(c)]{AKO}), so $P^*$ is 
a radical 2-subgroup of $H^*$.

\smallskip

\noindent\textbf{(b) } Since $W\le V$, $W=\Omega_1(Z(P))=C_V(P^*)$. By (a), 
$N_{H^*}(P^*)/P^*$ has a strongly embedded subgroup, and 
$O_2(N_{H^*}(P^*))=P^*$. 

If $H^*$ is a group of Lie type over the field $\F_2$, then by the 
Borel-Tits theorem (see \cite[Corollary 3.1.5]{GLS3}), $N_{H^*}(P^*)$ is a 
parabolic subgroup and $P^*=O_2(N_{H^*}(P^*))$. Hence $P^*\in\EE_{\calf^*}$ 
in this case. Also, $O^{2'}(\Out_{H^*}(P))\cong O^{2'}(H/P)$ is a central 
product of groups of Lie type in characteristic $2$ (cf. \cite[Proposition 
2.6.5(f,g)]{GLS3}). Since it has a strongly embedded subgroup, it must be 
isomorphic to $\SL_2(2)\cong S_3$ (hence $P\in\IND1{H^*}$), or possibly to 
$A_5\cong\SL_2(4)\cong\Omega_4^-(2)$ if $H^*\cong\SU_{2n}(2)$ or 
$\Omega_{2n}^-(2)$ for $n\ge2$ (in which case $P\in\IND2{H^*}$). Note that 
we cannot get $\SU_3(2)$ since we only consider even dimensional 
unitary groups.

\smallskip

\noindent\textbf{(c) } If $H^*\cong M_n$ for $n=22,23,24$, then by 
\cite[pp. 42--44]{GL}, it is of characteristic 2 type, in the sense that 
all 2-local subgroups are strictly 2-constrained. So $N_{H^*}(P^*)$ is 
strictly 2-constrained, $P^*$ is centric in this group, and hence 
$P^*\in\EE_{\calf^*}$. Also, $\EE_{\calf^*}\subseteq\IND1{H^*}$ if 
$H^*\cong M_{24}$ \cite[Proposition 6.5]{OV2}, while 
$\EE_{\calf^*}\subseteq\IND{\le2}{H^*}$ if $H^*\cong M_{22}$ or $M_{23}$ 
\cite[Table 5.2]{OV2}. 

The remaining cases ($H^*\cong A_6$, $A_7$, or $S_5$) are elementary.

\smallskip

\noindent\textbf{(d) } The radical 2-subgroups of $H^*\cong\Aut(M_{22})$ 
are listed in \cite[Table VIII]{Yoshiara-Co2}. There are just three classes 
of such subgroups $Q$ for which $N(Q)/Q$ has a strongly embedded subgroup, 
of which the members of two have index $2$ in a Sylow 2-subgroup and those 
of the third have index $4$. Each of them is essential in $\Aut(M_{22})$, 
and contains with index $2$ an essential 2-subgroup of $M_{22}$. 
\end{proof}

We will need to identify the elements of $\5\calz(\calf)$, when 
$\calf=\calf_S(G)$ for a sporadic group $G$ and $S\in\syl2{G}$. In most 
cases, it will turn out that $\5\calz(\calf)=\{Z_2(S)\}$, which is why we 
need some tools for identifying this subgroup.

\begin{Lem} \label{[S:C(V)]=2}
Let $S$ be a $2$-group, and assume $W\le S$ is elementary abelian. If 
$[S:C_S(W)]=2$, then $W\le Z_2(S)$ and $\rk(W)\le2\cdot\rk(Z(S))$.
\end{Lem}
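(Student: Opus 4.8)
The plan is to prove Lemma~\ref{[S:C(V)]=2} by a short direct computation with commutators, exploiting that $W$ is elementary abelian and that $C_S(W)$ has index $2$ in $S$.

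First I would fix $x\in S\smallsetminus C_S(W)$, so that $S=C_S(W)\langle x\rangle$ and $x^2\in C_S(W)$. For $w\in W$, since $x$ acts on the elementary abelian group $W$, the map $w\mapsto[w,x]=w^{-1}\,{}^{x}w$ has image contained in $W$; I claim this image lies in $Z(S)$. Indeed, $C_S(W)$ centralizes $W$ hence centralizes $[w,x]\in W$, while $x$ centralizes $[w,x]$ because $[w,x]={}^{x}w\cdot w$ (using that $W$ is abelian and $x$ normalizes $W$) gives ${}^{x}[w,x]=[{}^{x}w,x]={}^{x}w\cdot {}^{x}({}^{x}w)$, and a brief check using $x^2\in C_S(W)$ (so ${}^{x^2}w=w$, i.e.\ ${}^{x}({}^{x}w)=w$) shows ${}^{x}[w,x]={}^{x}w\cdot w=[w,x]$. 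Hence $[w,x]\in C_S(S)=Z(S)$ for all $w\in W$. Therefore $[W,S]=[W,C_S(W)\langle x\rangle]=[W,\langle x\rangle]\le Z(S)$, which is exactly the statement $W\le Z_2(S)$.

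Next, for the rank bound, I would consider the homomorphism $\varphi\colon W\too Z(S)$ defined by $\varphi(w)=[w,x]$. That this is a homomorphism follows from $W$ being abelian together with $[w,x]\in Z(S)$: $[ww',x]=[w,x]\,{}^{w}[w',x]=[w,x][w',x]$ since $[w',x]$ is central. Its kernel is $W\cap C_S(x)$. But $C_S(W)$ already centralizes $W$, so $C_W(x)=W\cap C_S(x)=W\cap C_S(\langle x\rangle\cdot C_S(W))=W\cap Z(S)$; thus $\ker\varphi=W\cap Z(S)$. On the other hand $\varphi$ maps into $Z(S)$, and composing with the quadratic map $z\mapsto z^2=1$ is irrelevant since everything is elementary abelian of exponent $2$. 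So as $\F_2$-vector spaces, $\rk(W)=\rk(W\cap Z(S))+\rk(\Im\varphi)\le \rk(Z(S))+\rk(Z(S))=2\rk(Z(S))$, using $\Im\varphi\le Z(S)$ and $W\cap Z(S)\le Z(S)$. This gives $\rk(W)\le 2\cdot\rk(Z(S))$ as claimed.

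I do not expect any serious obstacle here; the only point requiring a little care is verifying that $[w,x]$ is genuinely central (not merely in $W$), i.e.\ that it is fixed by $x$ as well as by $C_S(W)$, which is where the hypothesis $x^2\in C_S(W)$ enters. Everything else is a routine commutator-collection argument in a group of exponent-$2$ sections. An alternative, essentially equivalent, formulation of the rank estimate is to note that $S/C_S(W)\cong C_2$ acts on $W$ with fixed subspace $W\cap Z(S)=C_W(S)$ of codimension equal to $\rk(\Im\varphi)=\rk([W,S])\le\rk(Z(S))$; I would present whichever version reads most cleanly in context, but the commutator-map argument above is the most self-contained.
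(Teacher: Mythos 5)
Your overall strategy is the same as the paper's --- study the commutator map $w\mapsto[w,x]$ for a fixed $x\in S\sminus C_S(W)$, show its image lies in $Z(S)$, and read off both conclusions from its kernel and image --- but there is a genuine gap at the first step: you assert that ``$x$ acts on the elementary abelian group $W$,'' i.e.\ that $\9xW=W$, and that the image of $w\mapsto[w,x]$ is contained in $W$. Neither follows from the hypotheses, which say only that $W$ is elementary abelian with $[S:C_S(W)]=2$; in particular $W$ need not be normal in $S$. For example, take $S=D_8=\gen{r,s}$ with $|r|=4$ and $|s|=2$, and $W=\gen{s}$: then $C_S(W)=\gen{s,r^2}$ has index $2$, but $\9rW\ne W$ and $[s,r]=r^2\notin W$. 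Your verification that ${}^{x}[w,x]=[w,x]$ commutes $w$ past $\9xw$ ``since $W$ is abelian and $x$ normalizes $W$,'' so it rests on exactly this unjustified normality claim, as does the assertion that $C_S(W)$ centralizes $[w,x]$ ``because $[w,x]\in W$.''

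The repair is short and is precisely the extra sentence in the paper's proof: set $Q=C_S(W)$, note that $Q\nsg S$ (it has index $2$) and $W\le\Omega_1(Z(Q))$, and replace $W$ by $\Omega_1(Z(Q))$, which is elementary abelian, characteristic in $Q$ and hence normal in $S$, contains $W$, and still has centralizer of index at most $2$ (index $1$ being the trivial case $W\le Z(S)$). After this replacement your computation is correct and coincides with the paper's. Alternatively, you could keep the original $W$ and observe that $W$ and $\9xW$ both lie in $Z(Q)$ (since $\9xQ=Q$), hence commute elementwise; then $[w,x]\in Z(Q)$ rather than in $W$, and your centrality argument, the homomorphism $\varphi\colon W\to Z(S)$, and the kernel identification $\Ker(\varphi)=C_W(x)=W\cap Z(S)$ all go through verbatim. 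With either patch, the rank count $\rk(W)=\rk(W\cap Z(S))+\rk(\Im(\varphi))\le2\cdot\rk(Z(S))$ and the conclusion $[W,S]\le Z(S)$, i.e.\ $W\le Z_2(S)$, are correct.
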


\begin{proof} Set $Q=C_S(W)$ for short; $Q\nsg S$ since it has 
index $2$. Then $W\le\Omega_1(Z(Q))$, and upon replacing $W$ by 
$\Omega_1(Z(Q))$, we can arrange that $W\nsg S$. 

Fix $x\in S\sminus Q$. Since $x^2\in Q=C_S(W)$, we have $[W,S]=[W,x]\le 
C_W(x)\le Z(S)$. So $W\le Z_2(S)$, and $\rk(W)\le2\cdot\rk(Z(S))$.
\end{proof}

\begin{Lem} \label{Z2(S)}
Fix a finite group $G$ and a Sylow $2$-subgroup $S\in\syl2{G}$. 
\begin{enuma} 

\item If $G$ is one of the sporadic groups $J_4$, $\Co_2$, $\Co_1$, 
$\Suz$, $\Ru$, $\Fi_{24}'$, $F_5$, $F_3$, $F_2$, or $F_1$, then $|Z(S)|=2$ 
and $Z_2(S)\cong E_4$. If $G\cong\Co_2$, then $Z_2(S)$ has type 
\textup{$\2{abb}$}, while in all other cases, the three involutions in 
$Z_2(S)$ lie in the same $G$-conjugacy class. 

\item If $G\cong\Fi_{22}$, then $Z_2(S)\cong E_8$ is of type 
$\2a_2\0b_3\0c_2$ and contains a subgroup of type $\2b^2$. If 
$G\cong\Fi_{23}$, then $Z_2(S)\cong E_{16}$. 

\item If $G\cong \HS$, $\ON$, or $\Co_3$, then $|Z(S)|=2$ and $Z_2(S)\cong 
C_4\times C_2$. 

\end{enuma}
\end{Lem}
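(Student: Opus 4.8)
\medskip
\noindent\textbf{Proof strategy.}
The plan is to treat the listed groups one at a time by locating, in each, a $2$-local subgroup $H\ge S$ whose structure is recorded in the literature, and extracting $Z(S)$ and $Z_2(S)$ from the module structure of $Q=O_2(H)$. First I would isolate two elementary facts that will do the routine work. Suppose $H\le G$ contains $S$, $Q=O_2(H)$, and $F^*(H)=Q$ (so $C_H(Q)\le Q$). Then $C_S(Q)=Z(Q)$, whence
$Z(S)=C_{Z(Q)}(S/Q)$; moreover, when $Q$ is extraspecial with $|Z(Q)|=2$, every automorphism of $Q$ trivial on $Q/Z(Q)$ is inner, so no element of $S\sminus Q$ centralizes $Q/Z(Q)$, which forces $Z_2(S)\le Q$ and $Z_2(S)/Z(S)=C_{Q/Z(Q)}(S/Q)$, with $Z_2(S)$ elementary abelian exactly when the fixed space of $S/Q$ on $Q/Z(Q)$ (one-dimensional in all our cases) is singular for the quadratic form attached to $Q$. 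This reduces everything to computing a fixed space, and a value of a quadratic form, inside $H/Q$.

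For part (a), for $G$ one of $J_4$, $\Co_2$, $\Co_1$, $\Suz$, $\Fi_{24}'$, $F_5$, $F_3$, $F_2$, $F_1$ I would take $H=C_G(z)$ for a $2$-central involution $z$, with $Q=O_2(H)$ the extraspecial group displayed in Tables~\ref{tbl:p=2} and \ref{tbl:p=2-aut} (for $\Co_1$ and $\Fi_{24}'$ these are $2^{1+8}_+$ and $2^{1+12}_+$, using the $2$-central involution centralizers rather than the normalizers $2^{11}.M_{24}$ of Table~\ref{tbl:p=2}). In each case the cited references identify $Q/Z(Q)$ as a familiar module for $H/Q$ (spin module for $\Sp_6(2)$, a half-spin module for $\Omega_8^+(2)$, the natural orthogonal modules for $\Omega_6^-(2)$, $\SO_8^+(2)$, $3\,U_4(3)$ inside $\SO_{12}^+(2)$, etc.), from which one checks that a Sylow $2$-subgroup has a one-dimensional, singular fixed space; hence $|Z(S)|=2$ and $Z_2(S)\cong E_4$. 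The case $G\cong\Ru$ is mechanically different — one works instead with the explicit $2$-local structure of \cite[12.12]{A-over} and \cite[Th. J.1.1]{AS} (so $Q$ is special, not extraspecial), but the same bookkeeping gives $|Z(S)|=2$ and $Z_2(S)\cong E_4$. For part (c), $\Co_3$, $\HS$, $\ON$ behave differently because the relevant central subgroup of $O_2(C_G(z))$ is $C_4$ rather than $C_2$: for $\HS$ and $\ON$, $Z(O_2(C_G(z)))\cong C_4$ is inverted by $S/O_2(C_G(z))$, and for $\Co_3$ one uses $C_G(z)\cong2\cdot\Sp_6(2)$ with $S/\gen z\cong\nobreak T\in\syl2{\Sp_6(2)}$ and $Z(T)\cong E_4$; in all three cases the analogous preimage computation yields $|Z(S)|=2$ and $Z_2(S)\cong C_4\times C_2$. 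For part (b) I would use the $2$-local descriptions of the Fischer groups from \cite{A-3tr}, computing $Z_2(S)$ by the same recipe to get $Z_2(S)\cong E_8$ for $\Fi_{22}$ and $Z_2(S)\cong E_{16}$ for $\Fi_{23}$.

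It remains to pin down the $G$-classes of the involutions in $Z_2(S)$, which I expect to be the main obstacle, since the isomorphism types above fall out uniformly from local structure but the $\2a/\2b/\2c$ labelling does not. For every group in (a) other than $\Co_2$ this is immediate: $Z_2(S)$ lies in a known $\2a$-pure elementary abelian normal subgroup of $S$ (for $\Ru$, in $Z(O_2(H_3))$ with $H_3\cong2^{3+8}.L_3(2)$), so all three involutions are $G$-conjugate. For $\Co_2$ one must determine, among the fibre over the fixed vector of $Q/Z(Q)$, the classes of the two noncentral involutions; the fusion of $Q\cong2^{1+8}_+$ in $\Co_2$ — i.e. the $\Sp_6(2)$-orbits on $Q/Z(Q)$, cf.\ \cite[Lemma 2.1]{FSmith} — shows both have class $\2b$, so $Z_2(S)$ has type $\2{abb}$. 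For $\Fi_{22}$, the type $\2a_2\0b_3\0c_2$ and the existence of a subgroup of type $\2b^2$ follow from the fusion pattern of the $3$-transposition group (the $\2a$-elements being the transpositions), via \cite{A-3tr} together with the ATLAS \cite{atlas}. The bulk of the write-up is therefore this fusion bookkeeping across the fifteen groups, the two delicate points being the $\2{abb}$-labelling for $\Co_2$ and the $\2a_2\0b_3\0c_2$-labelling for $\Fi_{22}$.
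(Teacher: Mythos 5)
Your overall strategy is the same as the paper's: pick a $2$-local $H\ge S$ with $Q=O_2(H)=F^*(H)$, deduce $Z_2(S)\le Q$ and $Z_2(S)/Z(S)=C_{Q/Z(Q)}(S/Q)$, use the odd number of singular points of $\4Q$ to see that the fixed point is singular (so $Z_2(S)$ is elementary abelian), and settle the conjugacy classes by locating $Z_2(S)$ inside a pure normal elementary abelian subgroup. The main divergence is your choice of $2$-local for $J_4$, $\Co_2$ and $\Fi_{24}'$: the paper works with $2^{11}{:}M_{24}$, $2^{10}{:}M_{22}{:}2$ and $2^{11}{:}M_{24}$, where $Q$ is the Todd/Golay module and $|C_Q(S/Q)|$ and $|C_{Q/Z(S)}(S/Q)|$ can be read off from the sextet/hexad subgroups via \cite{MStroth}; with your extraspecial centralizers you instead owe a proof that a Sylow $2$-subgroup of $3M_{22}{:}2$ (resp.\ $3U_4(3).2$) has a one-dimensional fixed space on $\F_2^{12}$, which does not follow from Curtis's theorem on fixed points of full unipotent radicals and is the real content of those cases. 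Your part (c) route (through involution centralizers rather than the Alperin/O'Nan presentations of $S$ inside $4^3.L_3(2)$) is legitimate but again shifts the work into an unproved claim, namely $|Z(T)|=4$ for $T\in\syl2{\Sp_6(2)}$ together with the precise lifting of $Z(T)$ to $2\cdot\Sp_6(2)$.

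Two steps as written would not go through. First, for $\Co_2$ you claim the type $\2{abb}$ follows from the $\Sp_6(2)$-orbits on $\4Q$ (\cite[Lemma 2.1]{FSmith}); but transitivity on isotropic points only shows that the pair of $G$-classes met by the two involutions lying over the fixed singular point is independent of the point --- it says nothing about which classes those are. The paper needs two extra inputs: $C_G(x)$ has index divisible by $8$ for $x\in\2c$ (ruling out $\2c$), and the fact that a $\2a$-element has trace $-8$ on the Leech lattice, so a subgroup of type $\2a^2$ would fix only $0$ and cannot lie in $\Co_2$ (ruling out $\2a^2$, hence forcing $\2{abb}$). Second, for $\Fi_{22}$ your ``same recipe'' breaks down because $Q=O_2(C_G(z))\cong2\times2^{1+8}_+$ has $Z(Q)\cong E_4>Z(S)$: the fixed-space computation on $\4Q$ only gives the upper bound $|Z_2(S)|\le|Z(Q)|\cdot|C_{\4Q}(S)|=8$, and one must separately produce a rank-$3$ subgroup of $Z_2(S)$ (the paper exhibits $\gen{a_1a_2,a_3a_4,a_5,a_6}$ inside a normal hexad group $E_{32}$ of transpositions, which also yields the type $\2a_2\0b_3\0c_2$ and the $\2b^2$-subgroup explicitly). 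Both gaps are repairable, but neither is covered by the references you cite for them.
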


\begin{proof} \textbf{(a) } In each of these cases, we choose $Q\nsg S$ and 
$H=N_G(Q)$ as follows, where $H^*=H/Q\cong\Aut_H(Q)$:
	\[ \renewcommand{\arraystretch}{1.5}
	\begin{array}{|c||c|c|c|c|c|c|c|c|c|c|}
	\hline
	G & \Co_1 & \Suz & \Ru & F_5 & F_3 
	& F_2 & F_1 & J_4 & \Co_2 & \Fi_{24}' \\\hline\hline
	Q & 2^{1+8}_+ & 2^{1+6}_- & 2.2^{4+6} & 2^{1+8}_+ & 2^{1+8}_+ 
	& 2^{1+22}_+ & 2^{1+24}_+ & E_{2^{11}} & E_{2^{10}} & E_{2^{11}} \\\hline
	H^* & \Omega_8^+(2) & \Omega_6^-(2) & S_5 & \Omega_4^+(4):2 
	& A_9 & \Co_2 & \Co_1 & M_{24} & M_{22}:2 & M_{24} \\\hline
	\end{array} \]
References for all of these subgroups are given in the next section.

Assume that $|Z(Q)|=2$; i.e., that we are in one of the first seven cases. 
Then $|Z(S)|=2$, and $Z_2(S)\le Q$ since $H^*$ acts faithfully on $Q/Z(Q)$. 
Set $\4Q=Q/Z(Q)$, so that $Z_2(S)/Z(S)=C_{Z(\4Q)}(S/Q)$. If $Q$ is 
extraspecial, then $\rk(Z_2(S))\ge2$: since $\4Q$ has an odd number of 
isotropic points (cf. \cite[Theorem 11.5]{Taylor}), at least one is fixed 
by $S$. 

When $G\cong\Co_1$ or $\Suz$, $\4Q$ is the natural (orthogonal) module for 
$H^*$, so $|C_{\4Q}(S)|=2$ (see \cite[Theorem 6.15]{Curtis-reps} or 
\cite[Theorem 2.8.9]{GLS3}), and hence $Z_2(S)\cong E_4$. 


When $G\cong\Ru$, $\4Q$ is special of type $2^{4+6}$, $Z_2(Q)\cong E_{32}$, 
and $H/Q$ acts on $Z_2(Q)/Z(Q)$ via the natural action of $\SSL_2(4)$ 
\cite[\S\,1.4]{Wilson-Ru}. So $|C_{Z(\4Q)}(S/Q)|=2$ in this case, and 
$Z_2(S)\cong E_4$.

When $G\cong F_5$, a Sylow 2-subgroup of $O^2(H/Q)\cong\Omega_4^+(4)$ acts 
on $\4Q\cong(\F_4)^4$ with 1-dimensional fixed subgroup. This subgroup 
lifts to $V_3<Q$, where $V_3\cong E_8$ and $\Aut_G(V_3)\cong\GL_3(2)$ (see 
\cite[p. 365]{NW-HN}). Thus $[V_3,S]>Z(S)$, so $Z_2(S)<V_3$, and 
$Z_2(S)\cong E_4$.

When $G\cong F_3$, $\4Q$ as an $\F_2A_9$-module satisfies the hypotheses of 
Lemma \ref{l:A9-rk8} by \cite[3.7]{Parrott-Th}, and hence 
$|C_{\4Q}(S/Q)|=2$ by that lemma.

Assume $G\cong F_1$ or $F_2$. Thus $H^*\cong\Co_1$ or $\Co_2$, 
respectively. Set $T=S/Q\in\syl2{H^*}$, and let $V\nsg T$ and 
$K=N_{H^*}(V)$ be such that $K\cong2^{11}.M_{24}$ or $2^{10}.M_{22}:2$ and 
$V=O_2(K)$. By \cite[Lemmas 3.7.b \& 3.8.b]{MStroth}, $|C_{\4Q}(V)|=2$, and 
hence $|C_{\4Q}(S/Q)|=2$. So $Z_2(S)\cong E_4$ in both cases.

In the remaining three cases, $Q$ is elementary abelian. When 
$G\cong\Co_2$, $Q\cong E_{2^{10}}$ is the Golay module (dual Todd module) 
for $H^*\cong M_{22}:2$. Let $K<H^*$ be the hexad subgroup $K\cong2^4:S_6$, 
chosen so that $K>S^*=S/Q$, and set $R=O_2(K)\cong E_{16}$. Set 
$Q_1=C_Q(R)$ and $Q_5=[R,Q]$. By \cite[Lemma 3.3.b]{MStroth}, $\rk(Q_1)=1$, 
$\rk(Q_5)=5$, and $Q_5/Q_1$ is the natural module for $S_6\cong\Sp_4(2)$. 
Hence $Z(S)=C_Q(S^*)=Q_1$, and 
$Z_2(S)/Z(S)=C_{Q/Q_1}(S^*)=C_{Q_5/Q_1}(S^*)$ also has rank $1$. So 
$Z_2(S)\cong E_4$. The two elements in $Z_2(S)\sminus Z(S)$ are 
$S$-conjugate, and do not lie in $\2c$ since $C_G(x)\in\IND3G$ for 
$x\in\2c$ (see \cite[Table II]{Wilson-Co2}). By \cite[Table II]{Wilson-Co2} 
again, each $\2a$-element acts on the Leech lattice with character $-8$, so 
a subgroup of type $\2a^2$ would act fixing only the zero vector, hence 
cannot be in $\Co_2$. Thus $Z_2(S)$ has type $\2{abb}$.

Assume $G\cong\Fi_{24}'$ or $J_4$. In both cases, $Q\cong E_{2^{11}}$ is 
the Todd module for $H^*\cong M_{24}$ (see \cite[34.9]{A-3tr} and 
\cite[Theorem A.4]{Janko}). Let $K<H/Q$ be the sextet subgroup 
$K\cong2^6:3S_6$, chosen so that $K>S^*=S/Q\in\syl2{H^*}$, and set 
$R=O_2(K)\cong E_{64}$. By \cite[Lemma 3.5.b]{MStroth}, there are 
$\F_2K$-submodules $Q_1<Q_7<Q$ of rank $1$ and $7$, respectively, where 
$Q_1=C_Q(R)$ and $Q_2=[R,Q]$, and where $K/R\cong3S_6$ acts on $Q_7/Q_1$ as 
the dual module to $R$. Thus $Z(S)=Q_1$ and 
$Z_2(S)/Z(S)=C_{Q_7/Q_1}(S^*)\cong R/[S^*,R]$. Since $S^*\cong\UT_5(2)$ 
contains only two subgroups of rank $6$, one easily sees that 
$|R/[S^*,R]|=2$, and hence $Z_2(S)\cong E_4$.

In all of the above cases except $\Co_2$, $S$ contains a normal elementary 
abelian subgroup $V$ of rank at least $2$ all of whose involutions lie in 
the same $G$-conjugacy class. We refer to the lists of maximal 2-local 
subgroups in the next section, where we can take $V=V_i=Z(O_2(H_i))$, for 
$i=2$ (when 
$G\cong\Co_1$, $\Suz$, $F_2$, or $F_1$), $i=3$ (when $G\cong J_4$, $\Ru$, 
or $F_5$), or $i=5$ (for $G\cong\Fi_{24}'$ or $F_3$). Since each normal 
subgroup of order at least $4$ contains $Z_2(S)$, the involutions in 
$Z_2(S)$ also lie in the same class.

\smallskip

\noindent\textbf{(b) } When $G\cong\Fi_{22}$ and $S\in\syl2{G}$, 
$Z(S)=\gen{z}$ has order $2$, and 
$H=C_G(z)\cong(2\times2^{1+8}_+):U_4(2):2$. Set $Q=O_2(H)$. Then $O^2(H/Q)$ 
acts faithfully on $\4Q=Q/Z(Q)$ as a $4$-dimensional unitary space over 
$\F_4$, so $\dim_{\F_4}(C_{\4Q}(S\cap O^2(H)))=1$ \cite[Theorem 
6.15]{Curtis-reps}. An involution $hQ$ with 
$h\in H\sminus O^2(H)$ acts as a field automorphism on the unitary space 
$\4Q$, so $\dim_{\F_2}(C_{\4Q}(S))=1$. Since $|Z(Q)|=4$, this proves that 
$|Z_2(S)|\le8$.

To see that $Z_2(S)$ does contain a subgroup of rank $3$, consider a hexad 
group $V\cong E_{32}$ normal in $S$, generated by six transpositions 
$\{a_1,\dots,a_6\}$ (where $a_1\cdots a_6=1$), ordered so that 
$\Aut_S(V)=\gen{(1\,2)(3\,4),(1\,2)(5\,6),(1\,3)(2\,4)}$. Then 
	\[ Z(S)=C_V(S)=\gen{a_5a_6} \qquad\textup{and}\qquad
	Z_2(S)=\gen{a_1a_2,a_3a_4,a_5,a_6} \textup{ is of type } 
	\2a_2\0b_3\0c_2, \]
and $\gen{a_1a_2,a_3a_4}<Z_2(S)$ has type $\2b^2$.

When $G\cong\Fi_{23}$ and $S\in\syl2{G}$, $Z(S)\cong E_4$ contains 
involutions $x,y,z$ in each of the three classes $\2a$, $\2b$, and $\2c$, 
respectively. Also, $C_G(x)\cong2\Fi_{22}$, so we can identify $S/\gen{x}$ 
as a Sylow 2-subgroup of $\Fi_{22}$, whose center lifts to a pair of 
elements of class $\2b$ and $\2c$ in $G$. Thus $S/Z(S)\cong T/Z(T)$ when 
$T\in\syl2{\Fi_{22}}$, we already saw that $|Z(T/Z(T))|=4$, and so 
$|Z_2(S)|=16$. All involutions in $\Fi_{22}$ lift to involutions in 
$2\cdot\Fi_{22}<G$, so $Z_2(S)$ is elementary abelian. 

\smallskip

\noindent\textbf{(c) } When $G\cong\HS$ or $\ON$, this follows from 
the descriptions by Alperin \cite[Corollary 1]{Alperin} and O'Nan 
\cite[\S\,1]{O'Nan} of $S$ as being contained in an extension of the form 
$4^3.L_3(2)$. (In terms of their presentations, $Z(S)=\gen{v_1^2v_3^2}$, 
while $Z_2(S)=\gen{v_1v_3,v_1^2v_2^2}$.) When $G\cong\Co_3$, it follows 
from a similar presentation of $S\le4^3.(2\times L_3(2))$ (see, e.g., 
\cite[\S\,7]{OV2}). 
\end{proof}

\bigskip

\section{Injectivity of $\mu_G$} 
\label{s:Ker(mu)=1}


We are now ready to prove, when $p=2$, that $\Ker(\mu_G)=1$ for each of the 
sporadic groups $G$ not handled in Proposition \ref{kappa-2-small}. This 
will be done in each case by determining the set $\5\calz(\calf)$ and then 
applying Proposition \ref{Ker(mu):AOV1}. One can determine $\5\calz(\calf)$ 
using the lists of radical 2-subgroups found in \cite{Yoshiara-Co2} and 
other papers. However, we decided to do this instead using lists of maximal 
2-local subgroups, to emphasize that the details needed to prove this 
result are only a small part of what is needed to determine the radical 
subgroups.

\begin{Prop} \label{mu-2-big}
Assume $p=2$, and let $G$ be a sporadic simple group whose Sylow 
$2$-subgroups have order at least $2^{10}$. Then $\Ker(\mu_G)=1$.
\end{Prop}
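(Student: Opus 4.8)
The plan is to treat the fifteen groups of Table \ref{tbl:p=2} one at a time, in each case identifying the set $\5\calz(\calf)$ and then quoting Proposition \ref{Ker(mu):AOV1}. The reduction is immediate: in fourteen of the fifteen cases $|Z(S)|=2$ (Lemma \ref{Z2(S)}(a) handles ten of them, and for $M_{24}$, $\He$, $\Co_3$, $\Fi_{22}$ it is read off from the extraspecial or elementary-abelian $2$-local subgroup used below), while in the remaining case $G\cong\Fi_{23}$ the group $Z(S)\cong E_4$ has its three involutions in pairwise distinct $G$-classes (Lemma \ref{Z2(S)}(b)), so $\autf(\Omega_1(Z(S)))=1$. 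Hence by Proposition \ref{Ker(mu):AOV1}(e) it is enough to prove $|\5\calz(\calf)|\le1$ in each case, falling back on part (d) of that proposition should $\5\calz(\calf)$ unexpectedly have two or more members. So the whole task is to compute $\5\calz(\calf)$.

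For that I would work through the maximal $2$-local subgroups of $G$. Let $W\in\5\calz(\calf)$ and set $P=C_S(W)$; then $P\in\EE_\calf$ by Lemma \ref{Out->Aut}(b), $W=\Omega_1(Z(P))$, and $\autf(W)=\Aut_G(W)$ has a strongly embedded subgroup. Since $\Aut_G(W)\le\GL_{\rk W}(2)$, the classification of finite groups with a strongly embedded subgroup forces $\rk W\ge2$, and $\autf(W)=\GL_2(2)$ when $\rk W=2$. Fix the $2$-local $H=N_G(Q)$ with $Q=O_2(H)$ extraspecial or elementary abelian, $V=\Omega_1(Z(Q))$, $H^*=H/Q\cong\Aut_H(V)$, taken from the table in the proof of Lemma \ref{Z2(S)}(a) (and the analogous subgroups for $M_{24}$, $\He$, $\Co_3$, $\Fi_{22}$, $\Fi_{23}$). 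After replacing $W$ by an $\calf$-conjugate one shows that $W\le V$ and $C_S(V)\le P$, so that $P^*:=PQ/Q$ is a radical $2$-subgroup of $H^*$ with $N_{H^*}(P^*)/P^*$ possessing a strongly embedded subgroup (Lemma \ref{H/Q-essential}(a)). Running over the possibilities $H^*\in\{M_{24},\Co_1,\Co_2,L_3(2),\Omega_8^+(2),\Omega_6^-(2),\Omega_4^+(4):2,A_9,S_5,M_{22}:2\}$, together with Lemma \ref{H/Q-essential}(b)--(d), the analysis of $A_9$ via Lemma \ref{l:A9-rk8}, and the absolute-irreducibility facts already recorded in the proof of Proposition \ref{kappa-2-big}, one bounds $[S:P]=[S^*:P^*]$ by $2$ (or by $4$ in the cases $\Co_2$, $\Suz$, $\Ru$, $F_5$, $\Fi_{24}'$, where $H^*$ involves $S_5$, $\Omega_6^-(2)$, $\Omega_4^+(4):2$, or $M_{22}:2$). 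When $[S:P]=2$, Lemma \ref{[S:C(V)]=2} gives $W\le Z_2(S)$, and Lemma \ref{Z2(S)} then determines $W$; in the index-$4$ cases the essentially unique admissible $P^*$ is inspected directly and $W=C_V(P^*)$ is computed by hand. The outcome of this analysis is that $\5\calz(\calf)=\emptyset$ when $G\cong\Co_2$ (whose $Z_2(S)$ has type $\2{abb}$, so no admissible $W$ has $\autf(W)$ with a strongly embedded subgroup), and similarly for $\Co_3$, $\Fi_{22}$, $\Fi_{23}$, whereas for each of the remaining groups $\5\calz(\calf)$ consists of at most one subgroup — either empty or the single subgroup $Z_2(S)$, which when it is elementary abelian of rank $2$ with its three involutions $G$-conjugate has $\autf(Z_2(S))\cong S_3$. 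Either way $|\5\calz(\calf)|\le1$, and $\Ker(\mu_G)=1$ follows from Proposition \ref{Ker(mu):AOV1}(e).

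The main obstacle is the computation in the previous paragraph — specifically, ruling out ``unexpected'' members of $\5\calz(\calf)$. This is exactly the point at which detailed information about the maximal $2$-local subgroups of each $G$ and about which of their radical $2$-subgroups have a strongly embedded automizer-quotient is indispensable. The Conway and Fischer groups require some care, and the Rudvalis group is the hardest case: there the relevant essential subgroup has index $4$ in $S$, and one must work simultaneously with two $2$-local subgroups (the $2^{3+8}.L_3(2)$ and $2.2^{4+6}.S_5$ from the proof of Proposition \ref{kappa-2-big}), much as in that earlier argument.
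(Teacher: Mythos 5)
Your overall strategy --- determine $\5\calz(\calf)$ from the maximal $2$-local subgroups and then apply Proposition \ref{Ker(mu):AOV1} --- is the right one and is the paper's, but your computation of $\5\calz(\calf)$ comes out wrong in several cases, and the error is not harmless. For $G\cong M_{24}$ or $\He$ (with $S\cong\UT_5(2)$), $\5\calz(\calf)$ consists of \emph{two} non-conjugate rank-$2$ subgroups, namely $\gen{e_{15},e_{25}}$ and $\gen{e_{14},e_{15}}$. For $G\cong\Co_2$ it is not empty: the type $\2{abb}$ of $Z_2(S)$ only excludes $W=Z_2(S)$ itself, and there are two rank-$3$ members of $\5\calz(\calf)$ whose normalizers lie in the centralizers of the two $\2b$-involutions of $Z_2(S)$. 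For $G\cong\Fi_{23}$ there are again two members, of types $\2a\0b_3\0c_3$ and $\2a_3\0b_3\0c$. And for $\Co_3$ and $\Fi_{22}$ the set is a singleton, not empty. So in four of the fifteen cases the hypothesis of Proposition \ref{Ker(mu):AOV1}(e) simply fails, and your stated conclusion ``$|\5\calz(\calf)|\le1$ in every case'' is false.

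The fallback you offer --- ``falling back on part (d) should $\5\calz(\calf)$ unexpectedly have two or more members'' --- is not a proof, because part (d) carries a hypothesis that must be verified: after normalizing $\alpha$ once, one must show $g_P\in C_{Z(P)}(\autf(P))$ simultaneously for \emph{every} $W\in\5\calz(\calf)$ and $P=C_S(W)$. Verifying this is precisely the content of the four problematic cases, and it requires concrete structural input fed into parts (b) and (c) of the proposition: for $M_{24}$ and $\He$ one uses that $Q_1\cap Q_4=J(S)$ is characteristic in both $Q_1$ and $Q_4$ with $C_{Z(Q_1\cap Q_4)}(N_G(Q_i))=1$, forcing $g_{Q_1}=g_{Q_4}\in Z(S)$; for $\Co_2$ one uses that the two relevant subgroups $P_1,P_2$ are $S$-conjugate and that $y_i\in Z(N_G(P_i))$; for $\Fi_{23}$ one needs explicit $3$-elements normalizing both $P_j$ and $Q_2$ to pin down $g_{P_j}$ modulo subgroups central in $N_G(P_j)$. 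Without carrying out this bookkeeping (or, alternatively, correcting the computation of $\5\calz(\calf)$ and discovering that it cannot be reduced to a singleton in these cases), the argument is incomplete.
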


\begin{proof} There are fifteen groups to consider, and we go through the 
list one or two at a time. In each case, we fix $S\in\syl2{G}$ and set 
$\calf=\calf_S(G)$, $\call=\call_S^c(G)$, and $\5\calz=\5\calz(\calf)$. 
When we list representatives for the conjugacy classes of maximal 2-local 
subgroups of $G$, we always choose them so that each such $H$ satisfies 
$S\cap H\in\syl2{H}$. In particular, if $H$ has odd index in $G$, then 
$H\ge S$ and hence $O_2(H)\nsg S$ and $Z(O_2(H))\nsg S$ (making the choice 
of $H$ unique in most cases).

In four of the cases, when $G\cong M_{24}$, $\He$, $\Co_2$, or $\Fi_{23}$, 
$\5\calz$ has two members, and we use Proposition \ref{Ker(mu):AOV1}(b,c,d) 
to prove that $\mu_G$ is injective. In all of the other cases, 
$|\5\calz|=1$ and $|Z(S)|=2$, and we can apply Proposition 
\ref{Ker(mu):AOV1}(e). 
Recall that by Proposition \ref{Ker(mu):AOV1}, each class in $\Ker(\mu_G)$ 
contains an element $\alpha\in\Aut(\call)$ which acts as the identity on 
$\Aut_\call(S)$.

Note that whenever $|Z(S)|=2$ and $W\cong E_4$ 
is normal in $S$, $[S:C_S(W)]=2$, and hence $W\le Z_2(S)$ by Lemma 
\ref{[S:C(V)]=2}.

For convenience, we sometimes write $A\cj[H]B$ to mean that $A$ is 
$H$-conjugate to $B$, and $A\cjle[H]B$ to mean that $A$ is $H$-conjugate to 
a subgroup of $B$.

\bigskip

\begin{spor}{$M_{24}$, $\He$} We identify $S$ with $\UT_5(2)$, the group of 
$(5\times5)$ upper triangular matrices over $F_2$. Let $e_{ij}\in S$ (for 
$i<j$) be the matrix with $1$'s on the diagonal, and with unique nonzero 
off-diagonal entry $1$ in position $(i,j)$. Set $W_1=\gen{e_{15},e_{25}}$ 
and $W_4=\gen{e_{14},e_{15}}$, $Q_i=C_S(W_i)$ for $i=1,4$, and 
$Q_{14}=Q_1\cap Q_4$. By \cite[Propositions 6.2 \& 6.9]{OV2}, $Q_1$ and 
$Q_4$ are essential in $G$, and are the only essential subgroups with 
noncyclic center. Hence by Lemma \ref{Out->Aut}, $\5\calz=\{W_1,W_4\}$. 
Also, $Q_{14}=A_1A_2$, where $A_1$ and $A_2$ are the unique subgroups of 
$S$ of type $E_{64}$, and hence $Q_{14}=J(S)$ is characteristic in $S$, 
$Q_1$, and $Q_4$.

Fix $\alpha\in\Aut(\call)$ which is the identity on $\Aut_\call(S)$. By 
Proposition \ref{Ker(mu):AOV1}(a), there are elements $g_P\in 
C_{Z(P)}(\Aut_S(P))$, chosen for each $P\le S$ which is fully normalized in 
$\calf$ and $2$-centric in $G$, such that $\alpha|_{\Aut_\call(P)}$ is 
conjugation by $[g_P]$. Then $g_{Q_1}=g_{Q_{14}}=g_{Q_4}\in Z(S)$ by point 
(c) in the proposition, since for $i=1,4$, $C_{Z(Q_{14})}(N_G(Q_i))=1$. Set 
$g=g_{Q_1}$; upon replacing $\alpha$ by $c_g^{-1}\circ\alpha$, we can 
arrange that $\alpha|_{\Aut_\call(Q_i)}=\Id$ for $i=1,4$ without changing 
$\alpha|_{\Aut_\call(S)}$. Hence $\Ker(\mu_G)=1$ by Proposition 
\ref{Ker(mu):AOV1}(d).

\end{spor}

\begin{spor}{$J_4$} By \cite[\S\,2]{KW-J4}, there are four conjugacy classes 
of maximal 2-local subgroups, represented by: 
	\[ H_1\cong2^{1+12}_+.3M_{22}:2,\quad 
	H_3\cong2^{3+12}.(\Sigma_5\times L_3(2)), \quad
	H_{10}\cong2^{10}:L_5(2), \quad
	H_{11}\cong2^{11}:M_{24}. \]
Set $Q_i=O_2(H_i)$ and $V_i=Z(Q_i)\cong E_{2^i}$. Note that 
$H_{10}\in\IND1G$, while $H_i\ge S$ for $i\ne10$.

Fix $W\in\5\calz$ and set $P=C_S(W)$. Then $N_G(W)\cjle H_i$ for some $i$, 
in which case $P\cjge Q_i$ and $W\cjle V_i$ by Lemma 
\ref{H/Q-essential}(a). Thus $i>1$ since $\rk(W)\ge2$. By Lemma 
\ref{H/Q-essential}(b,c), 
$\Aut_P(V_i)\in\EEE{\Aut_{H_i}(V_i)}\subseteq\IND1{\Aut_{H_i}(V_i)}$, and 
hence $P\in\IND1{H_i}$. 

Thus either $[S:P]=2$, in which case $W=Z_2(S)\cong E_4$ by Lemmas 
\ref{[S:C(V)]=2} and \ref{Z2(S)}(a); or $i=10$ and $[S:P]=4$. In the latter 
case, since $H_{10}/V_{10}\cong L_5(2)$ acts on $V_{10}$ as 
$\Lambda^2(\F_2^5)$, we have $\rk(W)=\rk(C_{V_{10}}(P/V_{10}))\le
\rk\bigl(C_{V_{10}}([S^*,S^*])\bigr)=2$ for $S^*\in\syl2{H_{10}/V_{10}}$. 
So $W\cong E_4$ in all cases.

By \cite[Table 1]{KW-J4}, there are two classes of four-groups in $G$ whose 
centralizer has order a multiple of $2^{19}$, denoted $AAA^{(1)}$ and 
$ABB^{(1)}$, with centralizers of order $2^{20}\cdot3\cdot5$ and 
$2^{19}\cdot3\cdot5$, respectively. Thus $AAA^{(1)}\cj Z_2(S)$ (Lemma 
\ref{[S:C(V)]=2}), and $W$ lies in one of the two classes. Since 
$\Aut_G(ABB^{(1)})$ is a 2-group, $W\not\cj ABB^{(1)}$. Hence 
$\5\calz=\{Z_2(S)\}$, and $\mu_G$ is injective by Proposition 
\ref{Ker(mu):AOV1}(e).

\end{spor}

\begin{spor}{$\Co_3$} By \cite[Proposition 7.3]{OV2}, there is at 
most one essential subgroup with noncyclic center (denoted $R_1$); and 
$R_1\in\EE_\calf$ since otherwise $N_G(Z(S))$ would control fusion in $G$. 
Also, $\Out_G(R_1)\cong S_3$ and $Z(R_1)\in\5\calz$ by \cite[Propsition 
7.5]{OV2}. So $|\5\calz|=1$, and $\Ker(\mu_G)=1$ by Proposition 
\ref{Ker(mu):AOV1}(e). (In fact, it is not hard to see that 
$\5\calz=\{\Omega_1(Z_2(S))\}$.)

\end{spor}

\begin{spor}{$\Co_2$} By \cite[pp. 113--114]{Wilson-Co2}, each $2$-local 
subgroup of $G$ is contained up to conjugacy in one of the following 
subgroups:
	\[ H_1\cong2^{1+8}_+.\Sp_6(2), \quad H_4\cong2^{4+10}.(S_3\times 
	S_5), \quad H_5\cong(2^4\times2^{1+6}_+).A_8, \quad 
	H_{10}\cong2^{10}:M_{22}:2 \]
	\[ K_1\cong U_6(2):2,\quad K_2\cong \McL,\quad K_3\cong M_{23}. \]
For $i=1,4,5,10$, set $Q_i=O_2(H_i)$ and $V_i=Z(Q_i)\cong E_{2^i}$. 

Recall (Lemma \ref{Z2(S)}(a)) that $Z_2(S)$ has type $\2{abb}$. Set 
$Z_2(S)=\{1,x,y_1,y_2\}$, where $x\in\2a$ and $y_1,y_2\in\2b$. Thus 
$Z(S)=\gen{x}$, $H_1=C_G(x)$, and we can assume $H_5=C_G(y_1)$.

Fix $W\in\5\calz$, and set $P=C_S(W)$. Then $W\ge Z(S)$, so 
$W\cap\2a\ne\emptyset$. If $\rk(W)=2$, then $W$ must have type $\2a^2$. 
Since each $\2a$-element acts on the Leech lattice with character $-8$ 
\cite[Table II]{Wilson-Co2}, $W$ would fix only the zero element, and hence 
cannot be contained in $\Co_2$. Thus $\rk(W)\ge3$. If $[S:P]=2$, then $W\le 
Z_2(S)$ by Lemma \ref{[S:C(V)]=2}, which is impossible since 
$\rk(Z_2(S))=2$. So $[S:P]\ge4$.


If $N_G(W)\cjle K_2\cong\McL$ or $N_G(W)\cjle K_3\cong M_{23}$, then by the 
list of essential subgroups in these groups in \cite[Table 5.2]{OV2}, 
$\rk(W)=\rk(Z(P))\le2$. So these cases are impossible. 

The subgroup $K_1\cong U_6(2):2$ in $\Co_2$ is the stabilizer of a triple 
of 2-vectors in the Leech lattice \cite[pp. 561--2]{Curtis-.0}, which we 
can choose to have the form $(4,4,0,\dots)$, $(0,-4,4,\dots)$, and 
$(-4,0,-4,\dots)$. Using this, we see that the maximal parabolic subgroups 
$2^{1+8}_+:U_4(2):2$, $2^9:L_3(4):2$, and $2^{4+8}:(S_3\times S_5)$ in 
$K_1$ can be chosen to be contained in $H_1$, $H_{10}$, and $H_4$, 
respectively. If $N_G(W)\cjle K_1$, then it is contained in one of the 
maximal parabolics by the Borel-Tits theorem, and so $N_G(W)$ is also 
conjugate to a subgroup of one of the $H_i$.

Thus in all cases, we can assume that $N_G(W)\le H_i$ for some 
$i=1,4,5,10$. Then $P\ge Q_i$ and $W=Z(P)\le V_i$, so $i\ne1$.

Assume $i=5$, and recall that $\Fr(Q_5)=\gen{y_1}$. The image of $W$ in 
$V_5/\gen{y_1}\cong E_{16}$ has rank at least $2$ since $\rk(W)\ge3$, so 
$\Aut_{N_G(W)}(V_5/\gen{y_1})$ is the stabilizer subgroup of a projective 
line and plane in $A_8\cong\SL_4(2)$ (a line and plane determined by $S$). 
So there is at most one member of $\5\calz$ whose normalizer is in 
$H_5=C_G(y_1)$, and it has rank $3$ if it exists.

Now, $V_4\ge Z_2(S)$ since it is normal in $S$. Since $Z_2(S)$ has type 
$\2{abb}$, $S_5$ must act on $V_4^\#$ with orbits of order 
$5$ and $10$, and has type $\2a_5\0b_{10}$. So if $i=4$, then $W$ is a rank 
3 subgroup of the form $\2a_3\0b_4$ (the centralizer of a 2-cycle in 
$S_5$). There is exactly one $\2b$-element in $W$ whose product with each 
of the other $\2b$-elements is in class $\2a$, so $N_G(W)\cjle 
H_5=N_G(\2b)$: a case which we have already handled. 

Assume $i=10$, and set 
$H^*=H_{10}/V_{10}\cong\Aut_{H_{10}}(V_{10})\cong\Aut(M_{22})$ and 
$P^*=P/V_{10}$. By Lemma \ref{H/Q-essential}(d), $P^*\cap O^2(H^*)$ is an 
essential 2-subgroup of $O^2(H^*)\cong M_{22}$. Since 
$P\notin\IND1{H_{10}}$, $P^*\cap O^2(H^*)$ has the form $2^4:2<2^4:S_5$ 
(the duad subgroup) by \cite[Table 5.2]{OV2}, and this extends to 
$P^*\cong2^5:2<2^5:S_5<\Aut(M_{22})$. But $V_{10}.2^5$ has center $V_4$ 
(see \cite[Lemma 3.3]{MStroth}), and so we are back in the case $i=4$.

Thus $\5\calz=\{W_1,W_2\}$, where $\rk(W_i)=3$ and $N_G(W_i)\le C_G(y_i)\cj 
H_5$ for $i=1,2$. (These also correspond to the two 2-cycles in 
$\Aut_S(V_4)<S_5$.) Set $P_i=C_S(W_i)$. Fix $\alpha\in\Aut(\call)$ which is 
the identity on $\Aut_\call(S)$, and let $g_i=g_{P_i}\in 
C_{W_i}(\Aut_S(P_i))=Z_2(S)$ ($i=1,2$) be as in Proposition 
\ref{Ker(mu):AOV1}. Thus $\alpha|_{\Aut_\call(P_i)}$ is conjugation by 
$g_i$. Since $y_i\in Z(N_G(P_i))$, we can replace $g_i$ by $g_iy_i$ if 
necessary and arrange that $g_i\in Z(S)$. Then $g_1=g_2$ by Proposition 
\ref{Ker(mu):AOV1}(b) and since $P_1$ and $P_2$ are $S$-conjugate. Upon 
replacing $\alpha$ by $c_{g_1}^{-1}\circ\alpha$, we can arrange that 
$\alpha|_{\Aut_\call(Q_i)}=\Id$ for $i=1,4$ without changing 
$\alpha|_{\Aut_\call(S)}$. Hence $\Ker(\mu_G)=1$ by Proposition 
\ref{Ker(mu):AOV1}(d).

\end{spor}

\begin{spor}{$\Co_1$} There are three conjugacy classes of involutions in 
$G$, of which those in $\2a$ are 2-central. By \cite[Theorem 
2.1]{Curtis-Co1}, each $2$-local subgroup of $G$ is contained up to 
conjugacy in one of the subgroups
	\[ H_1\cong2^{1+8}_+.\Omega_8^+(2), \quad 
	H_2\cong2^{2+12}.(A_8\times S_3), \quad
	H_4\cong2^{4+12}.(S_3\times3S_6), \quad 
	H_{11}\cong2^{11}M_{24}; \]
	\[ K_1\cong(A_4\times G_2(4)):2, \quad 
	K_2\cong(A_6\times U_3(3)):2. \]
Curtis also included $\Co_2$ in his list, but it is not needed, 
as explained in \cite[p. 112]{Wilson-Co2}. Set $Q_i=O_2(H_i)$ and 
$V_i=Z(Q_i)\cong E_{2^i}$. 

Assume $W\in\5\calz$. Then $W\ge Z(S)$, so $W\cap\2a\ne\emptyset$. 
If $W\cap\2c\ne\emptyset$, then $N_G(W)\le H_i$ for some $i=1,2,4,11$ by 
\cite[Lemma 2.2]{Curtis-Co1} (where the involution centralizer in the 
statement is for an involution of type $\2a$ or $\2c$). If $W$ contains 
no $\2c$-elements, then by the argument given in \cite[p. 417]{Curtis-Co1}, 
based on the action of the elements on the Leech lattice, a product of 
distinct $\2a$-elements in $W$ must be of type $\2a$. So in this case, 
$\gen{W\cap\2a}$ is $\2a$-pure, and its normalizer is contained in some 
$H_i$ by \cite[Lemma 2.5]{Curtis-Co1} (together with Wilson's remark 
\cite[p. 112]{Wilson-Co2}).

Set $P=C_S(W)$; then $P\ge Q_i$ and hence $W\le V_i$. Also, $i\ne1$ since 
$\rk(W)>1$. By Lemmas \ref{H/Q-essential}(b,c), 
$\Aut_P(V_i)\in\EEE{\Aut_{H_i}(V_i)}\subseteq\IND1{\Aut_{H_i}(V_i)}$. Since 
$H_i\ge S$, we have $[S:P]=2$, and $W=\Omega_1(Z(P))\le Z_2(S)$ by Lemma 
\ref{[S:C(V)]=2}, with equality since $|Z_2(S)|=4$ by Lemma \ref{Z2(S)}. It 
follows that $\5\calz=\{Z_2(S)\}$, and $\Ker(\mu_G)=1$ by Proposition 
\ref{Ker(mu):AOV1}(e).


\end{spor}

\begin{spor}{$\Suz$} By \cite[\S\,2.4]{Wilson-Suz}, there are three classes of 
maximal 2-local subgroups which are normalizers of $\2a$-pure subgroups, 
represented by 
	\[ H_1\cong2^{1+6}_-.\Omega_6^-(2), \qquad
	H_2\cong2^{2+8}.(A_5\times S_3), \qquad
	H_4\cong2^{4+6}.3A_6. \] 

Fix $W\in\5\calz$, and set $P=C_S(W)$. Since $W\ge Z(S)$, it contains 
$\2a$-elements, and since $\gen{W\cap\2a}$ is $\2a$-pure by \cite[p. 
165]{Wilson-Suz}, $N_G(W)\le H_i$ for some $i\in\{1,2,4\}$. Then $P\ge 
O_2(H_i)$ and $W\le V_i\defeq Z(O_2(H_i))$ by Lemma \ref{H/Q-essential}(a), 
so $i\ne1$ since $\rk(W)\ge2$. Hence $i=2$ or $4$, so $\Aut_G(V_i)\cong 
S_3$ or $A_6$, and 
$\Aut_P(V_i)\in\EEE{\Aut_G(V_i)}\subseteq\IND1{\Aut_G(V_i)}$ by Lemma 
\ref{H/Q-essential}(b,c). So $[S:P]=2$, and $W\le Z_2(S)$ by Lemma 
\ref{[S:C(V)]=2}, with equality since $|Z_2(S)|=4$ by Lemma \ref{Z2(S)}. 
Thus $\5\calz=\{Z_2(S)\}$, and $\Ker(\mu_G)=1$ by Proposition 
\ref{Ker(mu):AOV1}(e).

\end{spor}

\begin{spor}{$\Ru$} 
There are two conjugacy classes of involutions, of which the $\2a$-elements 
are 2-central. By \cite[\S\,2.5]{Wilson-Ru}, the normalizer of each 
$\2a$-pure subgroup is contained up to conjugacy in one of the following 
subgroups:
	\[ H_1\cong 2.2^{4+6}.S_5 \qquad 
	H_3\cong 2^{3+8}.L_3(2) \qquad H_6\cong 2^6.G_2(2). \]
Set $Q_i=O_2(H_i)$ and $V_i=Z(Q_i)$. For each $i=1,3,6$, $V_i$ is 
elementary abelian of rank $i$ and $\2a$-pure. 

Fix $W\in\5\calz$, and set $P=C_S(W)\in\EE_\calf$. Then $W\ge Z(S)$, so $W$ 
contains $\2a$-elements. Since the subgroup $W_0=\gen{W\cap\2a}$ is 
$\2a$-pure \cite[p. 550]{Wilson-Ru}, $N_G(W)\le N_G(W_0)\le H_i$ for 
$i\in\{1,3,6\}$. Since $H_i$ is 2-constrained, $P\ge Q_i=O_2(H_i)$ and 
$W\le V_i$ by Lemma \ref{H/Q-essential}(a). Hence $i\ne1$, since 
$\rk(W)\ge2$.

For $i=3,6$, $\Aut_G(V_i)$ is a Chevalley group over $\F_2$, so by Lemma 
\ref{H/Q-essential}(b), $\Aut_P(V_i)\in\IND1{\Aut_G(V_i)}$, and hence 
$P\in\IND1{H_i}$. So $|P|=2^{13}$ (if $i=3$) or $2^{11}$ (if $i=6$). Also, 
$W$ is $\2a$-pure since $V_i$ is. By \cite[\S\,2.4]{Wilson-Ru}, there are 
four classes of subgroups of type $\2a^2$, of which only one has 
centralizer of order a multiple of $2^{11}$, and that one must be the class 
of $Z_2(S)$ (Lemma \ref{Z2(S)}). So $W=Z_2(S)$ if $i=3$, or if $i=6$ and 
$\rk(W)=2$. 

As explained in \cite[\S\,2.5]{Wilson-Ru}, if $W\le V_6$ and $\rk(W)\ge3$, 
then either $N_G(W)\cjle H_1$, or $N_G(W)$ is in the normalizer of a group 
of the form $\2a^2$ which must be conjugate to $Z_2(S)$ by the above 
remarks, or $C_G(W)=V_6$. The first case was already handled. If 
$N_G(W)\cjle N_G(Z_2(S))$, then $N_G(W)\cjle H_3$ by \cite[p. 
550]{Wilson-Ru}, and this case was already handled. If $C_G(W)=V_6$, then 
$W=P=V_6$, which is impossible since $G_2(2)$ does not have a strongly 
embedded subgroup. Thus $\5\calz=\{Z_2(S)\}$, and $\mu_G$ is injective by 
Proposition \ref{Ker(mu):AOV1}(e).

\end{spor}

\begin{spor}{$\Fi_{22}$, $\Fi_{23}$, or $\Fi_{24}'$} It will be simplest to 
handle these three groups together. Their maximal $2$-local subgroups were 
determined in \cite[Proposition 4.4]{Wilson-Fi22}, \cite{Flaass}, and 
\cite[Theorem D]{Wilson-Fi}, and are listed in Table \ref{tbl:Fischer}. To 
make it clearer how 2-local subgroups of one Fischer group lift to larger 
ones, we include the maximal 2-local subgroups in $\Fi_{21}\cong\PSU_6(2)$ 
(the maximal parabolic subgroups by the Borel-Tits theorem), and give the 
normalizers in $\Fi_{24}$ of the maximal 2-local subgroups of $\Fi_{24}'$.
\begin{table}[ht] 
\renewcommand{\arraystretch}{1.3}
\[ \begin{array}{|c||c|c|c|c|}
\hline
  & \PSU_6(2)=\Fi_{21} & \Fi_{22} & \Fi_{23} & \Fi_{24} \\\hline
K_1 &&& 2\cdot\Fi_{22} & (2\times2\cdot\Fi_{22}).2 \\\hline
K_2 && 2\cdot\Fi_{21} & 2^2\cdot\Fi_{21}.2 & (2\times2^2\cdot\Fi_{21}).S_3 \\\hline
K_3 &&& S_4\times\Sp_6(2) & S_4\times\Omega_8^+(2):S_3 \\\hline
H_1 & 2^{1+8}_+:U_4(2) & (2{\times}2^{1+8}_+:U_4(2)).2 & 
(2^2{\times}2^{1+8}_+).(3{\times} U_4(2)).2 & 
(2^{1+12}_+).3U_4(3).2^2 \\\hline
H_2 & 2^{4+8}:(A_5\times S_3) & 2^{5+8}:(A_6\times S_3) & 
2^{6+8}:(A_7\times S_3) & 2^{7+8}:(A_8\times S_3) \\\hline
H_3 & 2^9:M_{21} & 2^{10}.M_{22} & 2^{11}.M_{23} & 2^{12}.M_{24} \\\hline
H_4 && 2^6.\Sp_6(2) & [2^7.\Sp_6(2)] & 2^8:\SO_8^-(2) \\\hline
H_5 &&&& 2^{3+12}(\SL_3(2){\times} S_6) \\\hline
\end{array} \]
\caption{}
\label{tbl:Fischer}
\end{table}
Also, we include one subgroup which is not maximal: $H_4\le\Fi_{23}$ is 
contained in $K_1$. 

As usual, set $Q_i=O_2(H_i)$ and $V_i=Z(Q_i)$. For each of the four groups 
$\Fi_n$, $H_i\ge S$ for $i=1,2,3,5$. We write $K_i^{(n)}$, $H_i^{(n)}$, 
$Q_i^{(n)}$, or $V_i^{(n)}$  when we need to distinguish $K_i$, $H_i$, 
$Q_i$, or $V_i$ as a subgroup of $\Fi_n$.

Each of the groups $\Fi_n$ for $21\le n\le24$ is generated by a 
conjugacy class of 3-transpositions. 
By \cite[37.4]{A-3tr}, for $22\le n\le24$, $\Fi_n$ has classes of 
involutions $\calj_m$, for $m=1,2,3$ when $n=22,23$ and for $1\le m\le4$ 
when $n=24$. Each member of $\calj_m$ is a product of $m$ commuting 
transpositions (its \emph{factors}): a unique such product except when 
$n=22$ and $m=3$ (in which case each $x\in\calj_3$ has exactly two sets of 
factors) and when $n=24$ and $m=4$. Note that $\calj_1=\2a$, $\calj_2=\2b$, 
and $\calj_3=\2c$ in $\Fi_{22}$ and $\Fi_{23}$, while $\calj_2=\2a$ and 
$\calj_4=\2b$ in $\Fi_{24}'$ (and the other two classes are outer 
automorphisms).

In all cases, $K_1$, $K_2$, and $H_1$ are normalizers of sets of 
$(n-22)$, $(n-21)$, and $(n-20)$ pairwise commuting transpositions. Also, 
$H_3$ is the normalizer of the set of all $n$ transpositions in $S$; these 
generate $Q_3=V_3$ of rank $n-12$, and form a Steiner system of type 
$(n-19,n-16,n)$. Then $H_2$ is the normalizer of a pentad, hexad, heptad, 
or octad of transpositions: one of the members in that Steiner system. From 
these descriptions, one sees, for example, that a subgroup of type $K_i$ 
($i=1,2$) or $H_i$ ($i=1,2,3$) in $\Fi_{22}$ lifts to a subgroup of type 
$K_i$ or $H_i$, respectively, in $2\cdot\Fi_{22}<\Fi_{23}$ and in 
$2\Fi_{22}.2<\Fi_{24}'$.

By \cite[Lemma 4.2]{Wilson-Fi22}, each $\2b$-pure elementary abelian 
subgroup of $\Fi_{22}$ ($\2b=\calj_2$) supports a symplectic form for which 
$(x,y)=1$ exactly when conjugation by $y$ exchanges the two factors of $x$. 
Then $V_4^{(22)}$ is characterized as a subgroup of type 
$\2b^6$ with nonsingular symplectic form. Since each $\2b$-element in 
$\Fi_{22}$ lifts to a $\2b$- and a $\2c$-element in 
$2\cdot\Fi_{22}<\Fi_{23}$, $H_4^{(22)}$ lifts to $H_4^{(23)}$ of the form 
$2^7.\Sp_6(2)$. 

By \cite[Corollary 3.2.3]{Wilson-Fi}, each elementary abelian subgroup of 
$G\cong\Fi_{24}'$ supports a symplectic form where $(x,y)=1$ if and only if $y$ 
is in the ``outer half'' of $C_G(x)\cong2\cdot\Fi_{22}.2$ or 
$2^{1+12}_+.3U_4(3):2$. By \cite[Proposition 3.3.3]{Wilson-Fi}, the form on 
$V_4^{(24)}\cong E_{2^8}$ is nonsingular, and $V_4^{(24)}$ contains 
elements in both classes $\2a=\calj_2$ and $\2b=\calj_4$. If $x\in 
V_4\cap\2a$, then $V_4\cap O^2(C_G(x))/\gen{x}$ has rank $6$ with 
nonsingular symplectic form in $\Fi_{22}$, and hence $\bigl(C_{H_4}(x)\cap 
O^2(C_G(x))\bigr)\big/\gen{x}$ is conjugate to $H_4^{(22)}$. Thus 
$H_4^{(24)}$ contains a lifting of $H_4^{(22)}$ via the inclusion 
$2\cdot\Fi_{22}<\Fi_{24}'$.

Fix $W\in\5\calz$, and set $P=C_S(W)$. If $N_G(W)\le K_i$ for $i=1$ or $2$, 
then since $W\ge Z(S)$, and $O_2(K_i)$ does not contain involutions of all 
classes represented in $Z(S)$ (note that $O_2(K_i^{(24)})\cap\Fi_{24}'$ is 
$\2a$-pure for $i=1,2$), we have $\4W=(W\cap F^*(K_i))\big/O_2(K_i)\ne1$. 
Thus $N_G(\4W)$ is a 2-local subgroup of $F^*(K_i)/O_2(K_i)\cong\Fi_{22}$ 
or $\Fi_{21}$, and hence is contained up to conjugacy in one of its maximal 
2-local subgroups. So (after applying this reduction twice if $i=1$), 
$N_G(W)\le H_i$ for some $1\le i\le4$. We will see below that we can also 
avoid the case $N_G(W)\le K_3$ (when $G\cong\Fi_{23}$ or $\Fi_{24}'$), and 
hence that in all cases, $N_G(W)\cjle H_i$ for some $1\le i\le5$.

\boldd{When $G\cong\Fi_{22}$,} we just showed that (up to conjugacy) we can 
assume $N_G(W)\le H_i$ for some $i=1,2,3,4$. If $i=4$, then by Lemma 
\ref{H/Q-essential}(b), $W=C_{V_4}(P/V_4)$ where $P/V_4\in\EEE{H_4/V_4}$ 
and $H_4/V_4\cong\Sp_6(2)$, so $W$ must be totally isotropic with respect 
to the symplectic form on $V_4$ described above. But in that case, by 
\cite[Lemma 3.1]{Wilson-Fi22}, the subgroup $W^*>W$ generated by all 
factors of involutions in $W$ is again elementary abelian, and $N_G(W)\le 
N_G(W^*)\le H_j$ for some $j=1,2,3$. 

Thus $N_G(W)\le H_i$ where $i\in\{1,2,3\}$, $H_i$ is 2-constrained, and so 
$P=C_S(W)\ge O_2(H_i)$ and $W=\Omega_1(Z(P))\le V_i$. Also, $i\ne1$ since 
$V_1$ has type $\2{aab}$ (so $\Aut_G(V_1)$ is a 2-group). Hence 
$i=2,3$, and $H_i\in\IND0G$. By Lemma \ref{H/Q-essential}(c), 
$\Aut_P(V_i)\in\EEE{\Aut_{H_i}(V_i)}$, and either $[S:P]=2$, or $i=3$ and 
$[S:P]=4$. In this last case, $P/V_3\cong2^4:2$ is contained in a duad 
subgroup $D\cong2^4:S_5$ in $M_{22}$. Also, $O_2(D)\cong E_{16}$ permutes 
$V_3\cap\2a$ in five orbits of length $4$, each of which forms a hexad 
together with the remaining two transpositions. Hence $C_{V_3}(O_2(D))$ has 
type $\2{aab}$, and cannot contain $W$. 

Thus $[S:P]=2$, and hence $\rk(W)=2$ and $W\le Z_2(S)$ by Lemma 
\ref{[S:C(V)]=2}. By Lemma \ref{Z2(S)}(b), $Z_2(S)$ has rank $3$ and type 
$\2a_2\0b_3\0c_2$. Since $\Aut_G(W)$ is not a 2-group, $W$ must be the 
$\2b$-pure subgroup of rank 2 in $Z_2(S)$. (Note that the factors of the 
involutions in $W$ form a hexad.) Thus $|\5\calz|=1$, and 
$\Ker(\mu_G)=1$ by Proposition \ref{Ker(mu):AOV1}(e).

\boldd{When $G\cong\Fi_{23}$,} $W=\Omega_1(Z(P))$ strictly contains $Z(S)$. 
Hence $\rk(W)\ge3$, and $W$ contains involutions of each type $\2a$, $\2b$, 
and $\2c$. If $|W\cap\2a|=1$ $2$, or $3$, then $N_G(W)\le K_1$, $K_2$, or 
$H_1$, respectively, while if $|W\cap\2a|\ge4$, then $N_G(W)\le H_2$ or 
$H_3$, depending on whether or not the transpositions in $W$ are contained 
in a heptad. So by the above remarks, we can assume in all cases that 
$N_G(W)\le H_i$ for some $i=1,2,3,4$. Since $H_i$ is strictly 
2-constrained, $P\ge Q_i$ and $W\le V_i$. If $i=1$, then $W=V_1$ since it 
has rank at least $3$, and thus $W$ has type $\2a_3\0b_3\0c$. The case 
$i=4$ can be eliminated in the same way as it was when $G\cong\Fi_{22}$.

Assume $N_G(W)\le H_2$ and $W\le V_2$, where $\Aut_G(V_2)\cong A_7$. Write 
$V_2\cap\2a=\{a_1,\dots,a_7\}$, permuted by $\Aut_G(V_2)\cong A_7$ in the 
canonical way. Then (up to choice of indexing), $\Aut_P(V_2)$ is one of the 
two essential subgroups $P_1^*=\gen{(1\,2)(3\,4),(1\,2)(5\,6)}$ and 
$P_2^*=\gen{(1\,2)(3\,4),(1\,3)(2\,4)}$. Set $W_j=C_{V_2}(P_j^*)$ and 
$P_j=C_S(W_j)$; thus $P_j^*=\Aut_{P_j}(V_2)$ and hence $[S:P_j]=2$. Also, 
$W_1=\gen{a_1a_2,a_3a_4,a_5a_6}$ has type $\2a\0b_3\0c_3$, and 
$W_2=\gen{a_5,a_6,a_7}$ has type $\2a_3\0b_3\0c$ (thus $W_2\cj V_1$). 

If $N_G(W)\le H_3$ and $W\le V_3$, then $\Aut_G(V_3)\cong M_{23}$ has three 
essential subgroups, of which two are contained in the heptad group 
$2^4:A_7$ and one in the triad group $2^4:(3\times A_5):2$. In the first 
case, the subgroup $2^4$ acts on $V_3\cap\2a$ fixing a heptad, and we are 
back in the case $N_G(W)\le H_2$. In the second case, the subgroup $2^4$ 
fixes a rank 3 subgroup in $V_3$ generated by three tranpositions, and so 
the essential subgroup $2^4:2$ fixes only $Z(S)$.

Thus $\5\calz=\{W_1,W_2\}$, where $W_1,W_2\le Z_2(S)$ by Lemma 
\ref{[S:C(V)]=2}, and $W_1,W_2<V_2$. Also, $\sigma_2=(5\,6\,7)$ 
normalizes $P_2$ and $Q_2$ and permutes the three $\2b$-elements in $W_1$ 
cyclically, while $\sigma_1=(1\,3\,5)(2\,4\,6)$ normalizes $P_1$ and $Q_2$ 
and permutes the three $\2a$-elements in $W_2$ cyclically. 

Fix $\alpha\in\Aut(\call)$ which is the identity on $\Aut_\call(S)$. 
Let $g_P\in C_{Z(P)}(\Aut_S(P))$, for all $P\in\Ob(\call)$ fully normalized 
in $\calf$, be as in Proposition \ref{Ker(mu):AOV1}. Thus 
$\alpha|_{\Aut_\call(P)}$ is conjugation by $g_P$. Set $g=g_{Q_2}\in 
C_{Z(Q_2)}(\Aut_S(Q_2))=Z(S)$. Upon replacing $\alpha$ by 
$c_g^{-1}\circ\alpha$, we can arrange that $g_{Q_2}=1$, and hence that 
$\alpha$ is the identity on $\Aut_\call(Q_2)$. Since $Z(S)=Z(N_G(S))$ 
(recall $Z(S)$ has type $\2{abc}$), $\alpha$ is still the identity on 
$\Aut_\call(S)$. 

Set $P_j=C_S(W_j)$ ($j=1,2$). By Proposition \ref{Ker(mu):AOV1}(c) and 
since $\sigma_j$ normalizes $P_j$ and $Q_2$, $g_{P_1}\equiv g_{Q_2}=1$ 
modulo $\gen{W_1\cap\2a}$, and $g_{P_2}\equiv g_{Q_2}=1$ modulo 
$\gen{W_2\cap\2c}$. Also, $\gen{W_1\cap\2a}\le Z(N_G(P_1))$ and 
$\gen{W_2\cap\2c}\le Z(N_G(P_2))$ (since $N_G(P_i)\le N_G(W_i)$). Thus 
$\alpha|_{\Aut_\call(P_j)}=\Id$ for $j=1,2$, so $\alpha=\Id$ by Proposition 
\ref{Ker(mu):AOV1}(d). This proves that $\Ker(\mu_G)=1$.

\boldd{When $G\cong\Fi_{24}'$,} since $W\ge Z(S)$, it contains at least one 
$\2b$-element (recall $\2a=\calj_2$ and $\2b=\calj_4$). By 
Propositions 3.3.1, 3.3.3, 3.4.1, and 3.4.2 in \cite{Wilson-Fi} (corrected 
in \cite[\S\,2]{LW-Fi24}), the normalizer of every elementary abelian 
2-subgroup of $G$ is contained up to conjugacy in $K_1$, $K_2$, or one of 
the $H_i$ for $i\le5$, except when it is $\2a$-pure and the symplectic form 
described above is nonsingular. So we can assume that $N_G(W)$ is contained 
in one of these groups. Together with earlier remarks, this means that we 
can eliminate all of the $K_i$, and assume that $N_G(W)\le H_i$ for some 
$1\le i\le5$. So $P\ge Q_i$ and $W\le V_i$, and $i\ne1$ since $\rk(V_1)=1$. 

By Lemma \ref{H/Q-essential}(b,c), 
$W=C_{V_i}(P^*)$, where $P^*=\Aut_P(V_i)$ is an essential 2-subgroup of 
$H_i^*=\Aut_{H_i}(V_i)$. If $i=2,3,5$, then $P^*\in\IND1{H_i^*}$ by Lemma 
\ref{H/Q-essential}(b,c), and hence $[S:P]=2$ since $H_i\ge S$. So 
$W=Z_2(S)$ in these cases by Lemmas \ref{Z2(S)}(a) and \ref{[S:C(V)]=2}. 

If $i=4$, then $H_i^*\cong\Omega_8^-(2)$, and the conditions 
$P^*\in\EEE{H_4^*}$ and $\rk(C_V(P^*))\ge2$ imply that 
$N_G(W)\cong2^8.(2^{3+6}.(S_4\times3))$ (the stabilizer of an isotropic 
line and plane in the projective space of $V_4$). Hence $\rk(W)=2$ and 
$|P|=2^{19}$. By \cite[Table 15]{Wilson-Fi}, there are only two classes of 
four-groups in $G$ with centralizer large enough, one of type $\2{aab}$ 
(impossible since $\Aut(W)$ is not a 2-group), and the other $Z_2(S)$ of 
type $\2b^2$. Thus $\5\calz=\{Z_2(S)\}$, and $\Ker(\mu_G)=1$ by Proposition 
\ref{Ker(mu):AOV1}(e).

\end{spor}

\begin{spor}{$F_5$} By \cite[\S\,3.1]{NW-HN}, each 2-local subgroup of $G$ 
is contained up to conjugacy in one of the subgroups
	\[ H_1\cong2^{1+8}_+.(A_5\times A_5):2, \qquad
	H_3\cong2^3.2^2.2^6.(3\times L_3(2)), \qquad
	H_6\cong2^6\cdot U_4(2), \]
	\[ K_1\cong2\cdot \HS:2, \qquad 
	K_2\cong(A_4\times A_8):2 < A_{12}. \]
As usual, set $Q_i=O_2(H_i)$ and $V_i=Z(Q_i)$ for $i=1,3,6$. Then $V_1$ and 
$V_3$ are $\2b$-pure, and $O_2(K_1)$ and $O_2(K_2)$ are $\2a$-pure. 
By \cite[\S\,3.1]{NW-HN}, for each elementary abelian 2-subgroup $V\le G$, 
there is a quadratic form $\qq\:V\to\F_2$ defined by sending $\2a$-elements 
to 1 and $\2b$-elements to 0. 

Fix $W\in\5\calz$, and set $P=C_S(W)$. Then $W\ge Z(S)$, 
so $W\cap\2b\ne\emptyset$. So either the quadratic form $\qq$ on $W$ is 
nondegenerate and $\rk(W)\ge3$, or there is a $\2b$-pure subgroup $W_0\le 
W$ such that $N_G(W)\le N_G(W_0)$. By \cite[\S\,3.1]{NW-HN}, in this last 
case, $N_G(W_0)\le H_i$ for $i=1$ or $3$. 

If $N_G(W)\le N_G(W_0)\le H_i$ for $i=1,3$, then $P\ge O_2(H_i)$, so $W\le 
V_i$. In particular, $i\ne1$. If $N_G(W)\le H_3$, then $P$ has index 
$2$ in $S$ since $\Aut_G(V_i)\cong L_3(2)$, so $W=Z_2(S)$ by Lemmas 
\ref{[S:C(V)]=2} and \ref{Z2(S)}(a). 

Now assume $\qq$ is nondegenerate as a quadratic form (and $\rk(W)\ge3$). 
Let $W^*<W$ be a $\2a$-pure subgroup of rank $2$, and identify $C_G(W^*)$ 
with $(2^2\times A_8)<A_{12}<G$. If $\rk(W)=3$, then we can identify $W$ 
with $\gen{(1\,2)(3\,4),(1\,3)(2\,4),(5\,6)(7\,8)}$, so 
$C_G(W)\cong2^2\times(2^2\times A_4):2$, $P=C_S(W)\cong2^2\times(2^4:2)$, 
$Z(P)\cong2^4$, which contradicts the assumption that $W=\Omega_1(Z(P))$. 
If $\rk(W)\ge4$, then it must be conjugate to one of the subgroups (1), 
(2), or (3) defined in \cite[p. 364]{NW-HN} (or contains (2) or (3) if 
$\rk(W)=5$). Then $C_G(W)\cong E_{2^6}$ or $E_{16}\times A_4$, so $P=W\cj 
V_6$, which is impossible since $\Aut_G(V_6)\cong U_4(2)$ does not contain 
a strongly embedded subgroup.

Thus $\5\calz=\{Z_2(S)\}$, and $\mu_G$ is injective by Proposition 
\ref{Ker(mu):AOV1}(e).

\end{spor}

\begin{spor}{$F_3$} By \cite[Theorem 2.2]{Wilson-Th}, there are two classes 
of maximal $2$-local subgroups of $G$, represented by 
$H_1\cong2^{1+8}_+.A_9$ and $H_5\cong2^5.\SL_5(2)$. Set 
$Q_i=O_2(H_i)$ and $V_i=Z(Q_i)\cong E_{2^i}$ ($i=1,5$).

Fix $W\in\5\calz$, set $P=C_S(W)$, and let $i=1,5$ be such that 
$N_G(W)\le H_i$. Then $P\ge O_2(H_i)$ and $W\le V_i$, so $i=5$. By Lemma 
\ref{H/Q-essential}(b), $P/V_5\in\EEE{H_5/V_5}$ (where $H_5/V_5\cong 
L_5(2)$) and $[S:P]=2$. Hence $W\le Z_2(S)$ by Lemma \ref{[S:C(V)]=2}. 
Since $|Z_2(S)|=4$ by Lemma \ref{Z2(S)}, this proves that 
$\5\calz=\{Z_2(S)\}$, and hence that $\Ker(\mu_G)=1$ by Proposition 
\ref{Ker(mu):AOV1}(e).

\end{spor}

\begin{spor}{$F_2$, $F_1$} If $G\cong F_1$, then by \cite[Theorem 1]{MS}, there 
are maximal 2-local subgroups of the form
	\[ H_1\cong2^{1+24}.\Co_1,\quad
	H_2\cong2^2.[2^{33}].(M_{24}\times S_3),\quad
	H_3\cong2^3.[2^{36}].(L_3(2)\times3\cdot S_6), \] 
	\[ H_5\cong2^5.[2^{30}].(S_3 \times L_5(2)), \quad
	H_{10}\cong2^{10+16}\cdot\Omega_{10}^+(2), \]
If $G\cong F_2$, then by \cite[Theorem 2]{MS}, there 
are maximal 2-local subgroups of the form
	\[ H_1\cong2^{1+22}.\Co_2,\quad
	H_2\cong2^2.[2^{30}].(M_{22}:2\times S_3),\quad
	H_3\cong2^3.[2^{32}].(L_3(2)\times S_5), \] 
	\[ H_5\cong2^5.[2^{25}].L_5(2), \quad
	H_{9}\cong2^{9+16}\cdot\Sp_8(2), \]
As usual, we set $Q_i=O_2(H_i)$, and $V_i=Z(Q_i)\cong E_{2^i}$. In both 
cases ($G\cong F_1$ or $F_2$), $H_1=C_G(x)\ge S$ for $x\in\2b$, and $H_i>S$ 
($V_i\nsg S$) for each $i$.


Fix $W\in\5\calz$, and set $P=C_S(W)$. Then $W\ge Z(S)$, and hence 
$W$ contains $\2b$-elements. By \cite[Lemma 2.2]{Meierfrankenfeld}, $W$ is 
``of 2-type'', in the sense that $C_G(O_2(C_G(W)))$ is a 2-group, since the 
subgroup generated by a $\2b$-element is of $2$-type. In particular, 
$C_G(P)$ is a $2$-group and hence $C_G(P)=Z(P)$.

A $\2b$-pure elementary abelian $2$-subgroup $V\le G$ is called 
\emph{singular} if $V\le O_2(C_G(x))$ for each $x\in V^\#$. 
If $G\cong F_1$, then by \cite[Proposition 9.1]{MS}, applied with $P$ in the 
role of $Q$ and $t=1$, there is a subgroup $W_0\le W$ such that $N_G(W)\le 
N_G(W_0)$, and either $W_0$ is $\2b$-pure and singular or $W=W_0\cj V_{10}$. 
Since $\Aut_G(V_{10})\cong\Omega_{10}^+(2)$ has no strongly embedded 
subgroup, $W_0$ must be $\2b$-pure and singular, and hence 
$N_G(W)\le H_i$ for some $i=1,2,3,5$ by \cite[Theorem 1]{MS}. Thus $P\ge 
Q_i$ and $W\le V_i$, so $W$ is also $\2b$-pure and singular.

If $G\cong F_2$, identify $G=C_M(x)/x$, where $M\cong F_1$ and $x$ is a 
$\2a$-element in $M$. Let $\til{P}\le C_M(x)$ be such that $x\in\til{P}$ 
and $\til{P}/\gen{x}=P$, and set $\til{W}=\Omega_1(Z(\til{P}))$.  Then 
$x\in\til{W}$ and $\til{W}/\gen{x}\le W$, and 
$(W\cap\2b)\subseteq\til{W}/\gen{x}$ since $\2b$-elements in $G$ lift to 
pairs of involutions of classes $\2a$ and $\2b$ in $M$ (coming from a 
subgroup of type $\2{baa}$ in $Q_1<M$). By \cite[Proposition 9.1]{MS} 
again, applied with $\til{P}$ in the role of $Q$ and $t=x$, there is a 
subgroup $\til{W}_0\le\til{W}$ such that $N_M(\til{W})\le N_M(\til{W}_0)$, 
and either $\til{W}_0$ is $\2b$-pure and singular or 
$\til{W}=\til{W}_0\cj[M]V_{10}^{(M)}$. In the latter case, $W\cj V_9$, 
which is impossible since $\Aut_G(V_9)\cong\Sp_8(2)$ has no strongly 
embedded subgroup. Again, we conclude that $N_G(W)\le H_i$ for some 
$i=1,2,3,5$ \cite[Theorem 2]{MS}, and that $W\le V_i$ by Lemma 
\ref{H/Q-essential}(a) and hence is $\2b$-pure and singular.

By \cite[Lemma 4.2.2]{MS}, applied with $W=1$ (if $G\cong F_1$) or 
$W=\gen{x}$ ($G\cong F_2$), the automizer of a singular subgroup is its 
full automorphism group. Since $\GL_n(2)$ has no strongly embedded subgroup 
for $n\ne2$, this implies that $\rk(W)=2$. By \cite[Lemma 4.4]{MS}, if we 
identify $Q_1/V_1\cong E_{2^{24}}$ with the mod $2$ Leech lattice, then 
$\2a$-elements correspond to the $2$-vectors and $\2b$-elements to the 
classes of $4$-vectors, and hence $H_1/Q_1\cong\Co_1$ acts transitively on 
each. So $F_1$ contains a unique class of singular subgroups of rank $2$. A 
similar argument, using \cite[Corollary 4.6]{MS}, now shows that $F_2$ also 
contains a unique class of singular subgroup of rank $2$. Since $H_2>S$, 
each of these classes has a representative normal in $S$, so $W\nsg S$, and 
$W=Z_2(S)$ by Lemmas \ref{Z2(S)}(a) and \ref{[S:C(V)]=2}.

To conclude, we have now shown that $\5\calz=\{Z_2(S)\}$ in both cases. 
Hence $\Ker(\mu_G)=1$ by Proposition \ref{Ker(mu):AOV1}(e).

\end{spor}

This finishes the proof of Proposition \ref{mu-2-big}.
\end{proof}

By inspection in the above proof, in all cases where $Z_2(S)\cong E_4$ and 
its involutions are $G$-conjugate, we have $\5\calz(\calf)=\{Z_2(S)\}$. A 
general result of this type could greatly shorten the proof of Proposition 
\ref{mu-2-big}, but we have been unable to find one. The following example 
shows that this is not true without at least some additional conditions.


Set $G=2^4:15:4\cong\F_{16}\rtimes\GGL_1(16)$. Set $E=O_2(G)\cong E_{16}$, 
fix $S\in\syl2{G}$, and let $P\nsg S$ be the subgroup of index $2$ 
containing $E$. Then $Z_2(S)=Z(P)\cong E_4$ and $\Aut_G(Z_2(S))\cong S_3$, 
while $\5\calz(\calf_S(G))=\{Z_2(S),E\}$.

\bigskip

\end{document}

